\newcommand{\pch}{\chi_{\rho}}
\newcommand{\qed}{\hfill $\square$ \bigskip}
\newcommand{\mptt}[1]{}
\newtheorem{theorem}{Theorem} 
\newtheorem{lemma}[theorem]{Lemma}
\newtheorem{proposition}[theorem]{Proposition}
\begin{document}

\title{\bf A characterization of $4$-$\pch$-(vertex-)critical graphs}

\author{Jasmina Ferme \footnote{E-mail: jasmina.ferme1@um.si. Adress: Faculty of Education, University of Maribor, Koro\v ska cesta 160, 2000 Maribor, Slovenia}}
\date{}
\maketitle
\begin{center}
Faculty of Education, University of Maribor, Slovenia\\
\medskip
Faculty of Natural Sciences and Mathematics, University of Maribor, Slovenia\\
\medskip
\end{center}

\begin{abstract}
Given a graph $G$, a function $c:V(G)\longrightarrow \{1,\ldots,k\}$ with the property that $c(u)=c(v)=i$ implies that the distance between $u$ and $v$ is greater than $i$, is called a $k$-packing coloring of $G$. The smallest integer $k$ for which there exists a $k$-packing coloring of $G$ is called the packing chromatic number of $G$, and is denoted by $\pch(G)$. Packing chromatic vertex-critical graphs are the graphs $G$ for which $\pch(G-x) < \pch(G)$ holds for every vertex $x$ of $G$. A graph $G$ is called a packing chromatic critical graph if for every proper subgraph $H$ of $G$, $\pch(H) < \pch(G)$. 
Both of the mentioned variations of critical graphs with respect to the packing chromatic number have already been studied \cite{ bf-2019, kr-2019}. All packing chromatic (vertex-)critical graphs $G$ with $\pch(G)=3$ were characterized, while there were known only 
partial results for graphs $G$ with $\pch(G)=4$. 
%
In this paper, we provide characterizations of all packing chromatic vertex-critical graphs $G$ with $\chi_\rho(G)=4$ and all packing chromatic critical graphs $G$ with $\chi_\rho(G)=4$. 
\end{abstract}

\noindent {\bf Key words:} packing coloring, packing chromatic number, packing chromatic vertex-critical graph, packing chromatic critical graph.

\medskip\noindent
{\bf AMS Subj.\ Class:} 05C15, 05C70, 05C12.

\section{Introduction}
There are many variations of graph coloring and one of them is packing coloring. Given a graph $G$ and a positive integer $i$, a \textit{$k$-packing coloring} is a mapping $c: V(G)\longrightarrow \{1,2, \ldots, k\}$ with the following property: if $c(u)=c(v)=i$, then $d(u, v) > i$ for any $u,v \in V(G)$ and $i \in \{1,2, \ldots, k\}$ (note that $d(u,v)$ is the usual shortest-path distance between $u$ and $v$). The {\em packing chromatic number} of $G$, denoted by $\pch(G)$, is the smallest integer $k$ such that there exists a $k$-packing coloring of $G$. 

The packing chromatic number was introduced by Goddard, S.~M.~Hedetniemi, S.~T.~Hedetniemi, Harris and Rall~\cite{goddard-2008} in 2008 under the name broadcast
chromatic number. The current name was given in the second paper on the topic by Bre\v sar, Klav\v zar and Rall~\cite{bkr-2007}. 
The concept has a very wide spectrum of potential applications, such as frequency assignments~\cite{goddard-2008} or applications
in resource placements and biological diversity~\cite{bkr-2007}. 

Packing coloring has attracted many authors. This is reflected in the (probably non-exhaustive) list of papers on this topic that were published only in the last two years~\cite{balogh-2019, bozovic, bfk, bgt-2020, deng, ffgm, ght-2019, hjk-2020, kr-2019, alf, vesel_korze} (see also a survey~\cite{survey}). 
One of the main areas of investigation has been to determine the boundedness or the exact values of the packing chromatic numbers of several classes of (finite and infinite) graphs~\cite{bf-2018a, bf-2018b, bkr-2007, bkr-2016, deng, ekstein-2014, ekstein-2010, fiala-2009, finbow-2010, korze-2014, lb-2017, alf, martin-2017, togni-2014, torres-2015}. 
Among them, a lot of attention was given to the question of boundedness of the invariant in the class of (sub)cubic graphs, which was already posed in the seminal paper. Finally, it was answered in the negative by Balogh, Kostochka and Liu~\cite{balogh-2018} (see also an explicit construction in~\cite{bf-2018b}). 
Also, the problem of boundedness or the exact values of the packing chromatic numbers for the infinite grids was studied by many authors~\cite{bkr-2007, ekstein-2010, fiala-2009, finbow-2010, korze-2014, martin-2017, soukal-2010}. For instance, the question of what is the packing chromatic
number of the infinite square grid was already considered  in the seminal paper~\cite{goddard-2008}. Later, a lower and an upper bound for this number were improved~\cite{ekstein-2010, fiala-2009, soukal-2010} and currently, it is known that the packing chromatic number of the square grid is between $13$ and $15$~\cite{martin-2017}.

It is clear that the packing chromatic number is hereditary: a graph cannot have smaller packing chromatic number than its subgraphs. In particular, if we delete a vertex $v$ from a given graph $G$, then for the obtained graph we have: $\pch(G-v) \leq \pch(G)$. 
Klav\v zar and Rall~\cite{kr-2019} investigated the class of graphs $G$ with the property that $\pch(G-v)<\pch(G)$ holds for every $v\in V(G)$. Such graphs are called {\em packing chromatic vertex-critical graph}, or shorter {\em$\chi_\rho$-vertex-critical graph}. In the case when $G$ is $\chi_\rho$-vertex-critical and $\chi_\rho(G)=k$, we also say that $G$ is {\em $k$-$\chi_\rho$-vertex-critical}. Among other results, the mentioned authors characterized $3$-$\pch$-vertex-critical graphs, provided a partial characterization of $4$-$\pch$-vertex-critical graphs and considered $\pch$-vertex-critical trees. 
Later, Bre\v sar and Ferme~\cite{bf-2019} studied a different (basic) version of critical graphs for the packing chromatic number. Namely, they considered the class of graphs $G$ satisfying the following property: $\pch(H)<\pch(G)$ for each proper subgraph $H$ of a graph $G$. Such graphs are called {\em packing chromatic critical graph}, or shorter {\em$\chi_\rho$-critical graph}. If $G$ is $\chi_\rho$-critical and $\chi_\rho(G)=k$, we can say that $G$ is {\em $k$-$\chi_\rho$-critical}. The mentioned authors characterized $\pch$-critical graphs with diameter 2, $\pch$-critical block graphs with diameter 3 and $3$-$\pch$-critical graphs.  They also considered $\pch$-critical trees. 
In both of the mentioned papers, a partial characterization of $4$-$\pch$-(vertex)-critical graphs is given. In this paper, we present a general characterization of such graphs. 

The paper is organized as follows. In the next section, we establish the notation and define the concepts used throughout the paper. We present the known family of graphs $G$ with $\pch(G)=3$, which will help us to characterize $4$-$\pch$-vertex-critical graphs. In addition, we prove some lemmas, which will be very useful for the proofs in the sequel of this paper. 
In Section \ref{sec:4kriticni}, we recall some partial characterizations of $4$-$\pch$-vertex-critical graphs. Then, we present all $4$-$\pch$-vertex-critical graphs and prove the characterization. 
Based on this result, in Section \ref{kriticni_zadnje}, we provide a complete characterization of $4$-$\pch$-critical graphs.
We end the paper with some remarks.

\section{Notation and preliminaries}


Let $G$ be a graph (unless stated otherwise, the term \textit{graph} refers to a simple graph). We denote its vertex set by $V(G)$ and its set of edges by $E(G)$. The \textit{(open) neighborhood} of an arbitrary vertex $v \in V(G)$, $N_G(v)$, is the set of all vertices adjacent to $v$. The \textit{closed neighborhood} of $v$ is $N_G(v) \cup \{v\}$ and is denoted by $N_G[v]$.
The number of elements in $N_G(v)$, $|N_G(v)|$, is called the \textit{degree} of $v$ and is denoted by $\deg_G(v)$. In the case when $\deg_G(v) = 1$, we say that $v$ is a \textit{leaf} or a \textit{pendant vertex}. If $\deg(v)>1$, then $v$ is called a \textit{non-pendant vertex}. An \textit{isolated vertex} is a vertex $v$ with $\deg_G(v)=0$.
Further, the \textit{distance} between two vertices $u,v \in V(G)$, denoted by  $d_G(u,v)$, is the length of a shortest $u$-$v$-path in $G$. 
Note that the subscript in some of the above notations may be omitted if the graph $G$ is clear from the context.

A graph $H$ is a \textit{subgraph} of $G$ if $V(H) \subseteq V(G)$ and $E(H) \subseteq E(G)$. A subgraph $H$ of a graph $G$ is called a \textit{proper subgraph} of $G$ if $V(H)$ is a proper subset of $V(G)$ or $E(H)$ is a proper subset of $E(G)$. If $H$ is a subgraph of $G$ and $V(G)=V(H)$, then we say that $H$ is a \textit{spanning subgraph} of $G$. 
Further, a subgraph $H$ of a graph $G$ is an \textit{induced subgraph} of $G$ if for each pair of the vertices $a,b \in V(H)$ holds the following: if $ab \in E(G)$, then $ab \in E(H)$. If $A \subset V(G)$, then $G[A]$ denotes an induced subgraph of $G$ with the vertex set $A$. We will also use the notation $G-v$, which is shorter for $G[V(G)\setminus \{v\}]$. Next, if $e=xy \in E(G)$, then $G-e$ (or $G-xy$) denotes the subgraph of $G$ with $V(G-e)=V(G)$ and $E(G-e)=E(G) \setminus \{e\}$.

Let $i$ be a positive integer. Recall that a \textit{$k$-packing coloring} of $G$ is a mapping $c: V(G)\longrightarrow \{1,2, \ldots, k\}$ with the following property: if $c(u)=c(v)=i$, then $d(u, v) > i$ for every $u,v \in V(G)$ and $i \in \{1,2, \ldots, k\}$. We say that $G$ is \textit{$k$-packing colorable} if there exists a $k$-packing coloring of $G$. The {\em packing chromatic number} of $G$, denoted by $\pch(G)$, is the smallest integer $k$ such that there exists a $k$-packing coloring of $G$. 
We have already mentioned that the packing chromatic number is hereditary, which means that for any subgraph $H$ of $G$, $\chi_\rho(H) \leq \chi_\rho(G)$. This property will be used several times in the sequel of this paper.
Recall that $\pch(K_n)=n$ for every complete graph $K_n$, $n \geq 1$. Further, let $C_n$ be a cycle of order $n \geq 3$. Then, $\pch(C_n)=3$ if $n=3$ or $n$ is divisible by $4$. Otherwise, $\pch(C_n)=4$. For any path $P_n$ we have: $\pch(P_1)=1$, $\pch(P_2)= \pch(P_3)=2$ and $\pch(P_n)=3$ if $n \geq 4$.  \\

Now, we prove two lemmas, which will be very useful in Sections \ref{sec:4kriticni} and \ref{kriticni_zadnje}.\\

Let $n$ be a positive integer. The graph $X_n$ is formed from the disjoint union of one copy of $K_3$ and one copy of $P_n$ by joining a vertex of $K_3$ and a leaf (or an isolated vertex) of $P_n$ (see Fig. \ref{x5}). Similarly, $Y_n$ is the graph obtained from the disjoint union of one copy of $C_4$ and one copy of $P_n$ by joining a vertex of $C_4$ and a leaf (or an isolated vertex) of $P_n$ (see Fig. \ref{y5}). 

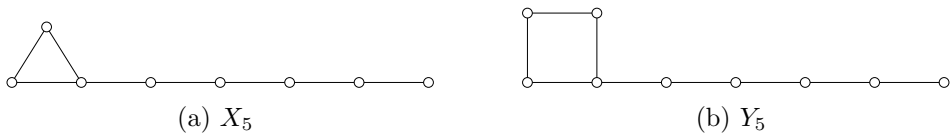
\begin{figure}[h]
\centering	
\begin{subfigure}[b]{0.4\textwidth}
       \centering
        \resizebox{\linewidth}{!}{
\begin{tikzpicture}
\def\vr{2pt}
\def\len{1}

\foreach \i in {1, 2, 3, 4, 5, 6, 7}{
\coordinate(x_\i) at (\i, 0); }
\coordinate(a) at (1.5, 0.8);
\draw (x_1) -- (x_2) -- (x_3)--(x_4)--(x_5)--(x_6)--(x_7);
\draw (x_1)--(a)--(x_2);
\foreach \i in {1, 2, 3, 4, 5, 6, 7}{
\draw (x_\i)[fill=white]circle(\vr);}
\draw (a)[fill=white]circle(\vr);
\end{tikzpicture}}
\caption{$X_5$}
\label{x5}
\end{subfigure}
\hspace{0.8cm}
\begin{subfigure}[b]{0.4\textwidth}
       \centering
        \resizebox{\linewidth}{!}{
\begin{tikzpicture}
\def\vr{2pt}
\def\len{1}

\foreach \i in {1, 2, 3, 4, 5, 6, 7}{
\coordinate(x_\i) at (\i, 0); }
\coordinate(a) at (1, 1);
\coordinate(b) at (2, 1);
\draw (x_1) -- (x_2) -- (x_3)--(x_4)--(x_5)--(x_6)--(x_7);
\draw(x_1)--(a)--(b)--(x_2);

\foreach \i in {1, 2, 3, 4, 5, 6, 7}{
\draw (x_\i)[fill=white]circle(\vr);}
\draw (a)[fill=white]circle(\vr);
\draw (b)[fill=white]circle(\vr);
\end{tikzpicture}}
\caption{$Y_5$}
\label{y5}
\end{subfigure}
\caption{Graphs $X_5$ and $Y_5$}
\label{fig:xn_yn}
\end{figure}

\begin{lemma}
If $n \geq 1$, then $ \pch(X_n)=3$ and $\pch(Y_n)=3$.
\label{lema:trikotnik+pot=3barve}
\label{lema:c4+pot=3barve}
\end{lemma}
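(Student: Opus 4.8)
The plan is to prove each of the two equalities by combining a lower bound coming from heredity with an explicit $3$-packing colouring.

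\medskip\noindent\textbf{Lower bounds.} I would first note that $X_n$ contains $K_3$ as a subgraph and $Y_n$ contains $C_4$ as a subgraph, which holds already for $n=1$ since $P_1$ is allowed to be a single vertex. Heredity of the packing chromatic number then yields $\pch(X_n)\ge\pch(K_3)=3$ and $\pch(Y_n)\ge\pch(C_4)=3$.

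\medskip\noindent\textbf{Upper bounds.} For $X_n$, I would write $t_1,t_2,t_3$ for the vertices of the $K_3$ with $t_1$ the vertex joined to the path, and $p_1,\dots,p_n$ for the path vertices with $p_1$ adjacent to $t_1$. I set $c(t_1)=3$, $c(t_2)=1$, $c(t_3)=2$, and colour the path with period four by $c(p_i)=1$ when $i$ is odd, $c(p_i)=2$ when $i\equiv 2\pmod 4$, and $c(p_i)=3$ when $i\equiv 0\pmod 4$. For $Y_n$, let $q_1q_2q_3q_4$ be the $4$-cycle in cyclic order with $q_1$ joined to the path and $p_1$ adjacent to $q_1$; I put $c(q_1)=3$, $c(q_2)=c(q_4)=1$, $c(q_3)=2$, and colour $p_1,\dots,p_n$ by the same periodic rule. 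I would then check that $c$ is a valid packing colouring in both cases: the vertices coloured $1$ are $t_2$ (resp.\ $q_2$ and $q_4$) together with the $p_i$ for odd $i$, any two of which are at distance at least $2$; the vertices coloured $2$ are $t_3$ (resp.\ $q_3$), which lies at distance at least $3$ from every $p_i$ coloured $2$, together with the $p_i$ for $i\equiv 2\pmod 4$, which are pairwise at distance at least $4$; and the vertices coloured $3$ are $t_1$ (resp.\ $q_1$) and the $p_i$ for $i\equiv 0\pmod 4$, which are pairwise at distance at least $4$ (in particular $d(t_1,p_4)=d(q_1,p_4)=4>3$). This gives $\pch(X_n)\le 3$ and $\pch(Y_n)\le 3$, completing the proof.

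\medskip\noindent\textbf{Main obstacle.} The one genuine decision is how to align the three colours forced on $K_3$ (or $C_4$) with the phase of the periodic colouring of the attached path, so that the few distance constraints between the gadget and the first few path vertices are met. The trick I expect to use is to place the largest, sparsest colour $3$ on the attachment vertex and to start the path with colour $1$, which may safely recur at distance $2$; after that, verification reduces to a finite check of distances near the attachment plus the standard periodicity argument along the rest of the path, so no real difficulty remains.
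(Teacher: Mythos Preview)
Your proof is correct and follows essentially the same approach as the paper: the lower bound via heredity from $K_3$ and $C_4$, and the upper bound by placing colour $3$ on the attachment vertex and extending along the path with the periodic pattern $1,2,1,3$. Your verification of the distance constraints is in fact more explicit than the paper's, which simply asserts that the described colourings are $3$-packing colourings.
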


\begin{proof}
Let $n \geq 1$ be an arbitrary positive integer. Recall that $\pch(K_3)=3$ and $\pch(C_4)=3$. Since $X_n$ contains a subgraph isomorphic to $K_3$ and $Y_n$ contains a subgraph isomorphic to $C_4$, the hereditary property of the packing chromatic number implies that $\pch(X_n) \geq 3$ and $\pch(Y_n) \geq 3$. 

On the other hand, there exist $3$-packing colorings of $X_n$ and $Y_n$, defined as follows. Color the vertices belonging to $K_3$ in $X_n$ with different colors from $\{1,2,3\}$ such that the vertex of degree $3$ receives color $3$. Similarly, color the vertices belonging to $C_4$ in $Y_n$ with different colors from $\{1,2,3\}$ such that the vertex of degree $3$ receives color $3$. 
Next, in both cases, color the vertices belonging to the path one after another (starting with the vertex, which is adjacent to the vertex of degree $3$) using the following pattern of colors: $1,2,1,3$. Since the described colorings are $3$-packing colorings of $X_n$, respectively $Y_n$, we derive that $\pch(X_n)=3$ and $\pch(Y_n)=3$. 
\qed 
\end{proof}


\begin{lemma}
If $T$ is the graph in Fig.~\ref{fig:T}, then $\pch(T)=3$.
\label{drevoT=3_barve}
\end{lemma}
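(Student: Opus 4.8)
The plan is to establish the two bounds $\pch(T)\ge 3$ and $\pch(T)\le 3$ separately, in the same spirit as the proof of Lemma~\ref{lema:trikotnik+pot=3barve}.

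For the lower bound, observe that $T$ is a tree that is not a subgraph of $P_3$ nor of a star, so it contains a subgraph isomorphic to $P_4$. Since $\pch(P_4)=3$, the hereditary property of the packing chromatic number gives $\pch(T)\ge\pch(P_4)=3$. (Equivalently, one can argue directly that $T$ admits no $2$-packing coloring: in such a coloring the vertices of color $1$ would form an independent set while those of color $2$ would be pairwise at distance at least $3$, and a short inspection along a longest path of $T$ shows that these two classes cannot together cover $V(T)$.)

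For the upper bound, I would exhibit an explicit $3$-packing coloring $c\colon V(T)\to\{1,2,3\}$. The conditions to be checked are that $c^{-1}(1)$ is independent, that the vertices of $c^{-1}(2)$ are pairwise at distance at least $3$, and that the vertices of $c^{-1}(3)$ are pairwise at distance at least $4$. Concretely, I would assign color $1$ to a large independent set of $T$ (one side of the natural bipartition, trimmed where necessary), and then distribute colors $2$ and $3$ on the remaining vertices, reusing along the paths of $T$ the pattern idea $1,2,1,3,\dots$ employed for the path part of $X_n$ and $Y_n$ in Lemma~\ref{lema:trikotnik+pot=3barve}, and using color $3$ at most once in each small region. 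Because $T$ has only finitely many (in fact few) vertices, verifying the distance conditions for colors $2$ and $3$ is a finite case check. Combining the two bounds yields $\pch(T)=3$.

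The main obstacle is the upper bound: one must ensure that the vertices receiving color $3$ are pairwise at distance at least $4$ and that no two vertices of color $2$ lie within distance $2$, which depends on the precise branch structure and diameter of $T$. If $T$ has a vertex of large degree or two branch vertices that are close together, the placement of colors $2$ and $3$ in that region requires some care, but once a consistent assignment is found the claim follows.
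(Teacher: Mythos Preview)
Your approach is the same as the paper's: a hereditary lower bound and an explicit $3$-packing coloring for the upper bound. The lower bound via a $P_4$ subgraph is fine (the paper instead invokes the fact from~\cite{goddard-2008} that any connected non-star graph with an edge has $\pch\ge 3$, which amounts to the same thing).

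The only issue is that you never actually carry out the upper bound. The graph $T$ is a specific $6$-vertex tree: a path $d\text{--}b\text{--}y'\text{--}c\text{--}e$ with one extra leaf $a$ attached at the centre $y'$. There is no ``obstacle'' here and no meaningful case analysis is needed; the paper simply writes down $c'(y')=3$, $c'(a)=c'(b)=c'(c)=1$, $c'(d)=c'(e)=2$ and observes that this is a $3$-packing coloring. Your hedging about ``large degree'' or ``two branch vertices close together'' suggests you did not look at Fig.~\ref{fig:T}; once you do, the explicit coloring is immediate and the proof is one line.
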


\begin{proof}
Let $T$ be the graph from Fig.~\ref{fig:T}. 
Since $T$ has at least one edge, is a connected graph and is not a star, $\pch(T) \geq 3$~\cite{goddard-2008}. Clearly, there exists a $3$-packing coloring $c'$ of $T$ defined as follows: $c'(y')=3$, $c'(a)=c'(b)=c'(c)=1$ and $c'(d)=c'(e)=2$ (see Fig.~\ref{fig:T} for the notation of  the vertices). Thus, $\pch(T)=3$ (see also~\cite{goddard-2008}). 
\qed
\end{proof}

%
%
\begin{figure}[htb!]
\begin{center}
\begin{tikzpicture}
\def\vr{3pt}
\def\len{1}
	
\coordinate(x_1) at (0, 0); 
\coordinate(x_2) at (1, 0); 
\coordinate(x_3) at (2, 0); 
\coordinate(x_4) at (3, 0); 
\coordinate(x_5) at (4, 0); 
\coordinate(x_6) at (2, 1); 

\draw (x_1) -- (x_2) -- (x_3)--(x_4)--(x_5);
\draw (x_6)--(x_3);

\foreach \i in {1, 2, 3, 4, 5, 6}{
\draw (x_\i)[fill=white]circle(\vr);}

\draw(x_1)node[below]{$d$};
\draw(x_2)node[below]{$b$};
\draw(x_3)node[below]{$y'$};
\draw(x_4)node[below]{$c$};
\draw(x_5)node[below]{$e$};
\draw(x_6)node[left]{$a$};

\end{tikzpicture}
\end{center}
\caption{Graph $T$}
\label{fig:T}
\end{figure}


Recall the characterization of graphs $G$ with $\chi_\rho(G)=3$, which was proven by Goddard and co-authors~\cite{goddard-2008}.
Note that \textit{$T$-add to a vertex $v$} is formed as follows. First, we introduce a vertex $w_v$ and a set $X_v$ of independent vertices. Then, we add the edge $vw_v$ and some of the edges between $\{v,w_v\}$ and $X_v$.  

\begin{proposition}~\cite{goddard-2008}
Let $G$ be a graph. Then, $\chi_\rho(G)=3$ if and only if $G$ can be formed by taking some bipartite multigraph $H$ with bipartition $(U_1,U_3)$, subdividing every edge exactly once, adding leaves to some vertices in $U_1\cup U_3$, and then performing a single $T$-add to some vertices in $U_3$.
\label{goddard-pch=3}
\end{proposition}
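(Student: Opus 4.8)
\medskip\noindent
\textbf{Proof plan.}
Being a characterization, the statement splits into two directions, and in both the natural first move is to translate ``$3$-packing colouring'' into the language of the colour classes $V_i=c^{-1}(i)$. For the ``only if'' direction, assume $c\colon V(G)\to\{1,2,3\}$ is a $3$-packing colouring. Since vertices of equal colour lie at distance $>1$, each $V_i$ is independent, so $c$ is a proper $3$-colouring; moreover the defining inequalities yield the two facts that drive everything: (a) no two vertices of $V_3$ lie at distance $\le 3$ (in particular they have no common neighbour), and (b) no two vertices of $V_2$ lie at distance $\le 2$ (in particular they have no common neighbour). From (a) and (b), every vertex has at most one neighbour in $V_2$ and at most one in $V_3$, and since a vertex of $V_1$ has all its neighbours in $V_2\cup V_3$, every vertex of $V_1$ has degree at most $2$. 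I would then set $U_3:=V_3$, call a degree-$2$ vertex of $V_1$ a \emph{link vertex} (it has exactly one neighbour in $V_2$ and one in $V_3$), let $U_1$ be the set of $V_2$-vertices that are endpoints of link vertices (isolated vertices of $V_1$ being treated as isolated vertices of $H$), and observe that the link vertices exhibit the relevant part of $G$ as the subdivision of a bipartite multigraph $H$ on $(U_1,U_3)$; parallel edges of $H$ are precisely pairs of link vertices with the same two endpoints, which is why a multigraph is needed (this is what makes $C_4$, coloured $1,2,1,3$, fit).

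It remains to account for (i) degree-$1$ vertices of $V_1$, (ii) $V_2$-vertices not in $U_1$, and (iii) the remaining adjacencies at vertices of $U_3$. A degree-$1$ vertex of $V_1$ is pendant, hence one of the ``added leaves'' at its unique neighbour, which lies in $U_1\cup U_3$. A $V_2$-vertex not in $U_1$ must have a neighbour in $V_3$ (otherwise its neighbourhood looks like that of a $U_1$-vertex and it would have been absorbed there); then, using (a), I would show that everything hanging off a fixed $v\in U_3$ that has not yet been described is exactly one $T$-add at $v$ — the vertex $w_v$ and the independent set $X_v$ being forced by the degree bound on $V_1$, by (a) applied around $v$, and by (b) applied to the $V_2$-vertices near $v$ — with colour pattern matching that of Lemma~\ref{drevoT=3_barve}. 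Assembling, $G$ arises from $H$ by subdividing, adding the leaves of~(i), and performing one $T$-add at each $v$ produced in~(iii), which is the asserted form.

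For the ``if'' direction, given $G$ built from a bipartite multigraph $H$ with parts $U_1,U_3$, I would exhibit a colouring directly: every vertex of $U_3$ gets colour $3$; every subdivision vertex gets colour $1$ (so the several subdivision vertices sharing an endpoint, pairwise at distance $2$, cause no conflict); every vertex of $U_1$ gets colour $2$; each added leaf is coloured from $\{1,2\}$, a vertex carrying at most one leaf of colour $2$; and the vertices of each $T$-add are coloured from $\{1,2\}$ as in Lemma~\ref{drevoT=3_barve} (the path parts following the pattern $1,2,1,3$ of Lemma~\ref{lema:trikotnik+pot=3barve}). The check is by colour class: colour $1$ only needs independence; for colour $3$, any two vertices of $U_3$ lie at distance $\ge 4$ in the subdivided $H$ (paths alternate $U_3$–sub–$U_1$–sub–$U_3$) and leaves and $T$-adds add no further colour-$3$ vertex and no shortcut; for colour $2$, vertices of $U_1$ are pairwise at distance $\ge 4$ and the few new colour-$2$ vertices from leaves and $T$-adds stay at distance $\ge 3$ from all colour-$2$ vertices. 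This gives $\chi_\rho(G)\le 3$, and together with the fact that a connected graph with an edge that is not a star has $\chi_\rho\ge 3$~\cite{goddard-2008} (applied componentwise, the degenerate cases being set aside) we conclude $\chi_\rho(G)=3$.

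The main obstacle is step~(iii) of the ``only if'' direction: proving that, once the subdivided-multigraph skeleton and the added leaves are stripped away, the structure hanging off a colour-$3$ vertex $v$ is \emph{exactly} one $T$-add and nothing more intricate. This is where fact~(a) is used repeatedly and delicately, since it forbids, for instance, a colour-$2$ neighbour of $v$ from also being joined by a short path to another colour-$3$ vertex, and it is what rules out configurations that would force a fourth colour. The remaining ingredients — the degree bound on $V_1$, the recognition of leaves, bipartiteness of $H$, and all the distance counts in the ``if'' direction — are routine once this local picture is pinned down.
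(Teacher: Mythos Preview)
The paper does not prove this proposition at all: it is quoted from~\cite{goddard-2008} and used as a black box (the paper's contribution is the partition $V_0,\ldots,V_7$ layered on top of it). So there is no ``paper's own proof'' to compare against; I can only assess your argument on its merits.

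Your overall plan is the right one, and the key structural facts --- every vertex has at most one $V_2$-neighbour and at most one $V_3$-neighbour, hence $V_1$-vertices have degree at most~$2$ --- are exactly what drives the characterization. The ``if'' direction is essentially complete (minor slip: leaves attached to $U_1$ must receive colour~$1$, not a choice from $\{1,2\}$, since their unique neighbour already carries colour~$2$).

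There is, however, a genuine gap in the ``only if'' direction, located precisely where you flag the difficulty but slightly earlier than you think. Your definition of $U_1$ as ``the $V_2$-vertices that are endpoints of link vertices'' is wrong in both directions. First, it can wrongly include the vertex $w_v$ of a $T$-add: if some $x\in X_v$ is adjacent to both $v$ and $w_v$, then $x$ is a degree-$2$ $V_1$-vertex with one $V_2$-neighbour ($w_v$) and one $V_3$-neighbour ($v$), so by your rule $x$ is a link vertex and $w_v$ lands in $U_1$ --- but $w_v$ is adjacent to $v\in U_3$, which is forbidden in the subdivided skeleton. Second, your $U_1$ wrongly excludes a $V_2$-vertex all of whose neighbours are degree-$1$ $V_1$-vertices (a star centre), which in the Goddard description is an isolated vertex of $H$ in the $U_1$-part with leaves attached. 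The clean fix is to take $U_1:=\{w\in V_2:\ N(w)\cap V_3=\emptyset\}$; then a degree-$2$ $V_1$-vertex is a subdivision vertex precisely when its $V_2$-neighbour lies in this $U_1$, and otherwise it is an $X_v$-vertex of a $T$-add. With this corrected definition your step~(iii) goes through: the unique $V_2$-neighbour of $v\in V_3$ (if any) is $w_v$, every degree-$2$ $V_1$-neighbour of $w_v$ has its $V_3$-neighbour forced to be $v$ by the distance-$3$ constraint, and the remaining neighbours are leaves --- exactly one $T$-add.
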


In this paper, we denote the family of graphs with packing chromatic number $3$ by $\mathcal{G}_3$. \\

Let $G \in \mathcal{G}_3$ be an arbitrary graph. We denote the subsets of $V(G)$ as follows. If $G$ is obtained from a bipartite multigraph $H$ with bipartition $(U_1, U_3)$ by subdividing every edge of $H$ exactly once, adding leaves to some vertices in $U_1\cup U_3$, and performing a single $T$-add to some vertices in $U_3$, then:
\begin{itemize}
\item $V_1=U_1$;
\item $V_3=U_3$;
\item $V_2$ is the set of all vertices obtained by subdivision;
\item $V_0$ is the set of all leaves added to the vertices in $V_1$;
\item $V_4$ is the set of all leaves adjacent to the vertices from $V_3$; 
\item $V_5$ is the set of all vertices from $T$-adds, which belong to a subgraph of $G$ isomorphic to $K_3$ and have degree $2$ (therefore, $V_5$ is the set of all vertices, which belong to a subgraph of $G$ isomorphic to $K_3$ and have degree $2$);
\item $V_6$ is the set of all vertices from $T$-adds, which either: a) have degree at least $3$, or b) have degree $2$ and do not belong to a subgraph of $G$ isomorphic to $K_3$ (therefore, $V_6$ is the set of all vertices adjacent to the vertices from $V_3$, which have degree at least $3$, or have degree $2$, but do not belong to a subgraph of $G$ isomorphic to $K_3$ and are not obtained by subdivision);
\item $V_7$ is the set of all vertices from $T$-adds, which are leaves and are adjacent to the vertices from $V_6$ (therefore, $V_7$ consists of all leaves adjacent to the vertices from $V_6$).
\end{itemize}

An example of the described labeling of subsets of $V(G)$, $G \in \mathcal{G}_3$, is shown in Fig.~\ref{fig:mnozice_pch=3}. 

We observe that $V(G)$ is partitioned into sets $V_0, V_1, \ldots, V_7$.
Further, since $V_1 \cup V_3$ induces a subgraph of $G$  isomorphic to a bipartite multigraph in which each edge is subdivided exactly once, $d(v_1, v_1')=4k_1$, $k_1 \geq 1$, and $d(v_3, v_3')=4k_3$, $k_3 \geq 1$, for any $v_1,v_1' \in V_1$ and $v_3, v_3' \in V_3$. Next, suppose that $a \in V(G)$ belongs to a subgraph of $G$ isomorphic to $K_3$. This implies that $a \in V_3 \cup V_5 \cup V_6$. More precisely, if deg$(a)=2$, then $a \in V_5$. If deg$(a) \geq 3$, and each vertex from $N(a)$ is either a leaf or belongs to a triangle, then $a \in V_6$. Otherwise, $a \in V_3$. 
In addition, we observe that each vertex from $V_6$ has at most one neighbour which is not a leaf and does not belong to a triangle.\\

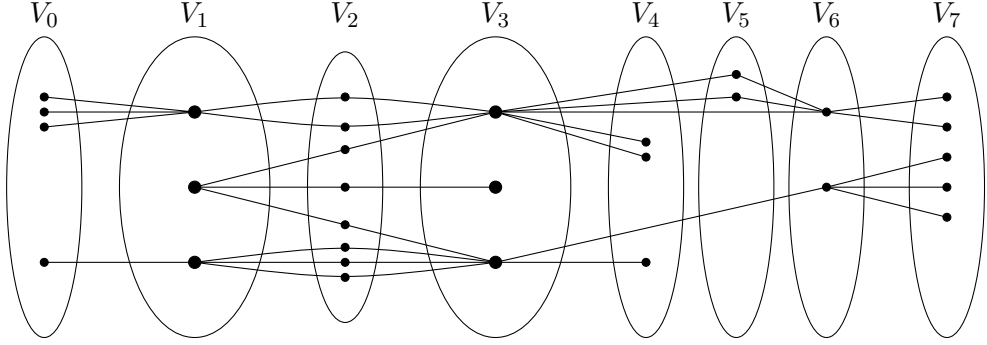
\begin{figure}[h]
\begin{center}
\begin{tikzpicture}
\def\vr{2.3pt}
\def\len{1}

\draw (0,0) ellipse (0.5cm and 2cm);
\coordinate(V_0) at (0, 2);
\draw(V_0)node[above]{$V_0$}; 

\draw (2,0) ellipse (1cm and 2cm);
\coordinate(V_1) at (2, 2);
\draw(V_1)node[above]{$V_1$}; 

\draw (4,0) ellipse (0.5cm and 1.8cm);
\coordinate(V_2) at (4, 2);
\draw(V_2)node[above]{$V_2$}; 

\draw (6,0) ellipse (1cm and 2cm);
\coordinate(V_3) at (6, 2);
\draw(V_3)node[above]{$V_3$}; 

\draw (8,0) ellipse (0.5cm and 2cm);
\coordinate(V_4) at (8, 2);
\draw(V_4)node[above]{$V_4$}; 

\draw (9.2,0) ellipse (0.5cm and 2cm);
\coordinate(V_5) at (9.2, 2);
\draw(V_5)node[above]{$V_5$}; 

\draw (10.4,0) ellipse (0.5cm and 2cm);
\coordinate(V_6) at (10.4, 2);
\draw(V_6)node[above]{$V_6$}; 

\draw (12,0) ellipse (0.5cm and 2cm);
\coordinate(V_7) at (12, 2);
\draw(V_7)node[above]{$V_7$}; 

\foreach \i in {1,2,3}{
\coordinate(v_\i) at (2,2-\i);
\draw(v_\i)[fill=black] circle(\vr);
\coordinate(u_\i) at (6,2-\i);
\draw(u_\i)[fill=black] circle(\vr);
}

\coordinate(a_1) at (0,1.2);
\draw(a_1)[fill=black] circle(1.5pt);
\coordinate(a_2) at (0,1);
\draw(a_2)[fill=black] circle(1.5pt);
\coordinate(a_3) at (0,0.8);
\draw(a_3)[fill=black] circle(1.5pt);
\coordinate(a_4) at (0,-1);
\draw(a_4)[fill=black] circle(1.5pt);
\draw[black](a_1)--(v_1)--(a_2); 
\draw[black] (a_3)--(v_1);
\draw[black] (a_4)--(v_3);

\coordinate(b_1) at (4,1.2);
\draw(b_1)[fill=black] circle(1.5pt);
\coordinate(b_2) at (4,0.8);
\draw(b_2)[fill=black] circle(1.5pt);
\draw (v_1) .. controls (4,1.25) .. (u_1);
\draw (v_1) .. controls (4, 0.75) .. (u_1);
\coordinate(b_3) at (4,0);
\draw(b_3)[fill=black] circle(1.5pt);
\coordinate(b_2) at (4,0.8);
\draw (v_2) .. controls (b_3) .. (u_2);
\coordinate(b_5) at (4,-1.2);
\draw(b_5)[fill=black] circle(1.5pt);
\coordinate(b_4) at (4,-0.8);
\draw(b_4)[fill=black] circle(1.5pt);
\coordinate(b_6) at (4,-1);
\draw(b_6)[fill=black] circle(1.5pt);
\draw (v_3) .. controls (4,-1.25) .. (u_3);
\draw (v_3) .. controls (4, -0.75) .. (u_3);
\draw (v_3) .. controls (4, -1) .. (u_3);
\coordinate(b_7) at (4,0.5);
\draw(b_7)[fill=black] circle(1.5pt);
\draw (v_2) -- (b_7) -- (u_1);
\coordinate(b_8) at (4,-0.5);
\draw(b_8)[fill=black] circle(1.5pt);
\draw (v_2) -- (b_8) -- (u_3);

\coordinate(c_1) at (9.2,1.5);
\draw(c_1)[fill=black] circle(1.5pt);
\coordinate(c_2) at (9.2,1.2);
\draw(c_2)[fill=black] circle(1.5pt);
\coordinate(d_1) at (10.4,1);
\draw(d_1)[fill=black] circle(1.5pt);
\coordinate(e_1) at (8,0.6);
\draw(e_1)[fill=black] circle(1.5pt);
\coordinate(e_2) at (8,0.4);
\draw(e_2)[fill=black] circle(1.5pt);

\draw (u_1)--(c_1)--(d_1)--(u_1);
\draw (u_1)--(c_2)--(d_1);
\draw (e_1)--(u_1)--(e_2);

\coordinate(e_3) at (8,-1);
\draw(e_3)[fill=black] circle(1.5pt);
\draw (u_3)--(e_3);

\coordinate(d_4) at (10.4,0);
\draw(d_4)[fill=black] circle(1.5pt);
\draw (u_3)--(d_4);

\coordinate(f_1) at (12,1.2);
\draw(f_1)[fill=black] circle(1.5pt);
\draw (d_1)--(f_1);
\coordinate(f_2) at (12,0.8);
\draw(f_2)[fill=black] circle(1.5pt);
\draw (d_1)--(f_2);

\coordinate(f_3) at (12,0.4);
\coordinate(f_4) at (12,0);
\coordinate(f_5) at (12,-0.4);
\draw(f_3)[fill=black] circle(1.5pt);
\draw(f_4)[fill=black] circle(1.5pt);
\draw(f_5)[fill=black] circle(1.5pt);
\draw (d_4)--(f_3);
\draw (d_4)--(f_4);
\draw (d_4)--(f_5);

\end{tikzpicture}
\end{center}
\caption{The sets $V_0, V_1, \ldots, V_7$} 
\label{fig:mnozice_pch=3}
\end{figure}

We end this section with two lemmas, which will be very useful in the sequel of this paper. 

\begin{lemma}
Let $G$ be the graph obtained by attaching a vertex to two adjacent vertices of $C_n, n \geq 4$.  Then, $G$ contains a subgraph isomorphic to $C_n$, $n \geq 5$, $n \not\equiv 0 \pmod{4}$.
\label{lemma:cikel+trikotnik=neustrezen_cikel}
\end{lemma}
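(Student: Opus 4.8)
The plan is to prove the statement constructively: inside $G$ I will point to two concrete cycles — the original $C_n$ and a cycle of length $n+1$ — and then a short case split on $n \bmod 4$ shows that at least one of these two cycles has length $m\ge 5$ with $m\not\equiv 0\pmod 4$. Set up notation as follows. Write the given cycle as $v_1v_2\cdots v_nv_1$ in cyclic order, and let $w$ be the attached vertex. Since the two vertices of $C_n$ to which $w$ is joined are adjacent on $C_n$, we may assume without loss of generality that $N_G(w)=\{v_1,v_2\}$, so that $V(G)=\{w,v_1,\ldots,v_n\}$ and $E(G)=E(C_n)\cup\{wv_1,wv_2\}$.

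First I would record the two relevant cycles. Trivially $C_n$ is a subgraph of $G$. For the second cycle, replace in $C_n$ the edge $v_1v_2$ by the path $v_1\,w\,v_2$ of length $2$; this yields the cycle $w,v_1,v_n,v_{n-1},\ldots,v_3,v_2,w$ on the vertex set $\{w,v_1,\ldots,v_n\}$, which is a subgraph of $G$ isomorphic to $C_{n+1}$ (it uses the edges $wv_1$, $v_nv_1$, the consecutive edges of $C_n$ from $v_n$ down to $v_2$, and $wv_2$, all distinct).

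Now the case analysis. If $n\not\equiv 0\pmod 4$, then together with $n\ge 4$ this forces $n\ge 5$, so the subgraph $C_n$ itself already satisfies $n\ge 5$ and $n\not\equiv 0\pmod 4$, and we are done. If $n\equiv 0\pmod 4$, then $n+1\equiv 1\pmod 4$, hence $n+1\not\equiv 0\pmod 4$, and $n\ge 4$ gives $n+1\ge 5$; thus the subgraph isomorphic to $C_{n+1}$ exhibited above has the required properties. In either case $G$ contains a subgraph isomorphic to $C_m$ for some $m\ge 5$ with $m\not\equiv 0\pmod 4$.

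There is essentially no obstacle in this argument; the only point that deserves a moment's attention is the boundary case $n=4$, where $C_n$ itself fails on both counts (it has length $4<5$ and $4\equiv 0\pmod 4$), so one is forced to use the $(n+1)$-cycle, i.e.\ a $C_5$. The case split above handles this automatically, since $n=4$ falls under $n\equiv 0\pmod 4$.
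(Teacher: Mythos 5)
Your proposal is correct and follows exactly the paper's own argument: if $n\not\equiv 0\pmod 4$ the cycle $C_n$ itself works (and $n\ge 5$ is automatic), and otherwise the cycle of length $n+1$ obtained by rerouting through the attached vertex has length $\ge 5$ and $\equiv 1\pmod 4$. Nothing further is needed.
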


\begin{proof}
Let $G$ be formed by attaching a vertex $v$ to two adjacent vertices of $C_n$, $n \geq 4$, with $V(C_n)=\{x_1,x_2, \ldots, x_n\}$ and $E(C_n)=\{x_1x_2, x_2x_3, \ldots, x_{n-1}x_n, x_nx_1\}$. Further, let $x_1v,x_2v \in E(G)$.
Clearly, if $n \not\equiv 0 \pmod{4}$, then we are done. Otherwise, $G$ contains a subgraph isomorphic to a cycle $C_{n+1}$ ($V(C_{n+1})=\{v,x_1,x_2, \ldots, x_n\}$ and $E(C_{n+1})=\{x_1v, vx_2, x_2x_3, \ldots, x_{n-1}x_n, x_nx_1\}$). Since $n+1 \geq 5$ and $n+1 \not\equiv0\pmod{4}$, our claim holds. 
\qed
\end{proof}

\begin{lemma}
Let $G$ be a graph, which does not contain a sugraph isomorphic to $C_n$, $n \geq 4$, $n \not \equiv 0\pmod{4}$. Next, let $u \in V(G)$ be an arbitrary vertex and let $N_i= \{v \in V(G);~d(u,v)=i\}$ for any $i \geq 1$. Further, let $i$ be an arbitrary positive integer and let $a,b$ be any two vertices from $N_i$. Then, $ab \notin E(G)$ if $ab$ is not an edge of a subgraph of $G$ isomorphic to $K_3$. 
\label{lemma:vodoravna_povezava}
\end{lemma}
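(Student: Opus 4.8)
The plan is to prove the contrapositive: assuming $a,b\in N_i$ with $ab\in E(G)$, I will exhibit a triangle of $G$ having $ab$ as one of its edges. First I would fix a shortest $u$--$a$ path $P_a:\ u=a_0,a_1,\dots,a_i=a$ and a shortest $u$--$b$ path $P_b:\ u=b_0,b_1,\dots,b_i=b$. The key structural fact is that along any shortest path emanating from $u$ the distance to $u$ strictly increases by one at every step; consequently, if a vertex $v$ lies on both paths it must appear in the same position $d(u,v)$ on each. So the set $S=\{\,s:\ a_s=b_s\,\}$ is well defined, contains $0$, and does not contain $i$ (otherwise $a=b$, impossible since $ab\in E(G)$ and $G$ is simple).

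Next, set $t=\max S$, so $t\le i-1$, and let $w=a_t=b_t$ be the last common vertex of the two paths. By the same "position equals distance to $u$" observation together with the maximality of $t$, the tails $\{a_{t+1},\dots,a_i\}$ and $\{b_{t+1},\dots,b_i\}$ are pairwise disjoint and both avoid $w$. Hence concatenating these two tails through the edge $ab$ produces a genuine (vertex-simple) cycle
$$ w,\ a_{t+1},\ \dots,\ a_i=a,\ b=b_i,\ b_{i-1},\ \dots,\ b_{t+1},\ w $$
of length $m=2(i-t)+1$, which is odd and at least $3$.

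Now I would invoke the hypothesis on $G$. Since $m$ is odd it is never divisible by $4$, so if $m\ge 5$ the cycle above is a subgraph of $G$ isomorphic to $C_m$ with $m\ge 5$ and $m\not\equiv 0\pmod 4$, contradicting the assumption on $G$. Therefore $m=3$, which forces $i-t=1$, i.e.\ $t=i-1$. In that case the cycle is precisely the triangle on $\{a_{i-1},a,b\}$: its three edges are the path edge $a_{i-1}a_i$, the path edge $b_{i-1}b_i$, and $ab$. Thus $ab$ is an edge of a subgraph of $G$ isomorphic to $K_3$, which is exactly the contrapositive of the statement.

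I do not anticipate a real obstacle here; the only point demanding care is the verification that the concatenated object is a simple cycle rather than a mere closed walk, and this is exactly what the monotonicity of distances along shortest $u$-paths (plus the choice of $t$ as the largest common index) delivers.
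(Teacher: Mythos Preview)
Your proposal is correct and follows essentially the same approach as the paper's proof: both take shortest $u$--$a$ and $u$--$b$ paths, pick the last common vertex $w$, and observe that the two tails together with the edge $ab$ form an odd cycle, which by the hypothesis must be a triangle. Your write-up is in fact more careful than the paper's on the one delicate point---verifying that the concatenated walk is a simple cycle---by explicitly using the ``position equals distance to $u$'' observation to conclude that common vertices of the two shortest paths occur at matching indices, so that the tails past $w$ are genuinely disjoint.
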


\begin{proof}
Let $G$ be a graph, which does not contain a sugraph isomorphic to $C_n$, $n \geq 4$, $n \not \equiv 0\pmod{4}$, $u \in V(G)$ and $N_i= \{v \in V(G);~ d(u,v)=i\}$ for any $i \geq 1$. Next, let $i$ be an arbitrary positive integer and let $a,b$ be any two vertices from $N_i$. 
Suppose to the contrary that $ab \in E(G)$ and $ab$ is not an edge of a subgraph of $G$ isomorphic to $K_3$.   Denote by $P$ a shortest $a$-$u$-path and by $Q$ a shortest $b$-$u$-path . Let $w \in V(P) \cap V(Q)$ such that the distance between $w$ and $u$ is the largest possible. Then, the vertices from $P$ between $w$ and $a$, and the vertices from $Q$ between $w$ and $b$ form a cycle of odd order. Since $ab$ is not an edge of a triangle, we infer that $G$ contains a cycle of order $n > 3$, $n \not \equiv 0\pmod{4}$. This contradicts to our assumption.  
\qed
\end{proof}


\section{$4$-$\pch$-vertex-critical graphs}
\label{sec:4kriticni}

In this section, we study $4$-$\pch$-vertex-critical graphs. First, we recall some known partial results for these graphs. We follow with the main result of this paper: a complete characterization of $4$-$\pch$-vertex-critical graphs.



The first known partial characterization of $4$-$\pch$-vertex-critical graphs considers all graphs that contain a cycle $C_n$, where $n\ge 5$ is not divisible by $4$. 
Note that for such cycles, we have $\pch(C_n)=4$ and $\pch(C_n-u)=3$ for any $u \in V(C_n)$, which implies that these cycles themselves are $4$-$\pch$-vertex-critical. However, if $G$ has $\pch(G)=4$ and $G$ contains a non-spanning subgraph isomorphic to a cycle $C_n$, where $n\ge 5$ is not divisible by $4$, then $G$ is clearly not $4$-$\pch$-vertex-critical. 


Denote by $\mathcal{C}_5$ the family of graphs, which are shown in Fig.~\ref{fig:C5}, by $C_6$ the family of graphs, which are shown in Fig.~\ref{fig:C6}, and let $\mathcal{C}=\{C_n;~n \geq 5$, $n \not\equiv 0 \pmod{4}$.
The following theorem provides a complete characterization of $4$-$\pch$-vertex-critical graphs that contain a cycle $C_n \in \mathcal{C}$.

\begin{figure}[h]
\centering	
\begin{subfigure}[b]{0.15\textwidth}
       \centering
        \resizebox{\linewidth}{!}{
\begin{tikzpicture}
\def\vr{2pt}
\def\len{1}

\coordinate(a_1) at (0, 0); 
\coordinate(a_2) at (1.2, 0); 
\coordinate(a_3) at (1.7, 1.1); 
\coordinate(a_4) at (0.6, 2); 
\coordinate(a_5) at (-0.5, 1.1);
\draw (a_1) -- (a_2) -- (a_3)--(a_4)--(a_5)--(a_1);
\draw (a_2)--(a_4);
\foreach \i in {1, 2, 3, 4, 5}{
\draw (a_\i)[fill=white]circle(\vr);}
\end{tikzpicture}}
\end{subfigure}
\hspace{0.8cm}
\begin{subfigure}[b]{0.15\textwidth}
       \centering
        \resizebox{\linewidth}{!}{
\begin{tikzpicture}
\def\vr{2pt}
\def\len{1}

\coordinate(a_1) at (0, 0); 
\coordinate(a_2) at (1.2, 0); 
\coordinate(a_3) at (1.7, 1.1); 
\coordinate(a_4) at (0.6, 2); 
\coordinate(a_5) at (-0.5, 1.1);
\draw (a_1) -- (a_2) -- (a_3)--(a_4)--(a_5)--(a_1);
\draw (a_2)--(a_4);
\draw(a_1)--(a_3);
\foreach \i in {1, 2, 3, 4, 5}{
\draw (a_\i)[fill=white]circle(\vr);}
\end{tikzpicture}}
\end{subfigure}
\hspace{0.8cm}
\begin{subfigure}[b]{0.15\textwidth}
       \centering
        \resizebox{\linewidth}{!}{
\begin{tikzpicture}
\def\vr{2pt}
\def\len{1}

\coordinate(a_1) at (0, 0); 
\coordinate(a_2) at (1.2, 0); 
\coordinate(a_3) at (1.7, 1.1); 
\coordinate(a_4) at (0.6, 2); 
\coordinate(a_5) at (-0.5, 1.1);
\draw (a_1) -- (a_2) -- (a_3)--(a_4)--(a_5)--(a_1);
\draw (a_2)--(a_4)--(a_1);
\foreach \i in {1, 2, 3, 4, 5}{
\draw (a_\i)[fill=white]circle(\vr);}
\end{tikzpicture}}
\end{subfigure}
\hspace{0.8cm}
\begin{subfigure}[b]{0.15\textwidth}
       \centering
        \resizebox{\linewidth}{!}{
\begin{tikzpicture}
\def\vr{2pt}
\def\len{1}

\coordinate(a_1) at (0, 0); 
\coordinate(a_2) at (1.2, 0); 
\coordinate(a_3) at (1.7, 1.1); 
\coordinate(a_4) at (0.6, 2); 
\coordinate(a_5) at (-0.5, 1.1);
\draw (a_1) -- (a_2) -- (a_3)--(a_4)--(a_5)--(a_1);
\draw (a_2)--(a_4)--(a_1);
\draw(a_3)--(a_5);
\foreach \i in {1, 2, 3, 4, 5}{
\draw (a_\i)[fill=white]circle(\vr);}
\end{tikzpicture}}
\end{subfigure}

\caption{The graphs from $\mathcal{C}_5$}
\label{fig:C5}
\end{figure}
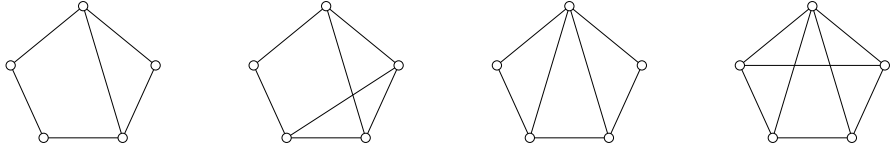

\begin{figure}[h]
\centering	
\begin{subfigure}[b]{0.15\textwidth}
       \centering
        \resizebox{\linewidth}{!}{
\begin{tikzpicture}
\def\vr{2pt}
\def\len{1}

\coordinate(a_1) at (0, 0); 
\coordinate(a_2) at (1.2, 0); 
\coordinate(a_3) at (1.7, 1); 
\coordinate(a_4) at (1.2, 2); 
\coordinate(a_5) at (0, 2); 
\coordinate(a_6) at (-0.5, 1);
\draw (a_1) -- (a_2) -- (a_3)--(a_4)--(a_5)--(a_6)--(a_1);
\draw (a_3)--(a_6);
\foreach \i in {1, 2, 3, 4, 5,6}{
\draw (a_\i)[fill=white]circle(\vr);}
\end{tikzpicture}}
\end{subfigure}
\hspace{0.8cm}
\begin{subfigure}[b]{0.15\textwidth}
       \centering
        \resizebox{\linewidth}{!}{
\begin{tikzpicture}
\def\vr{2pt}
\def\len{1}

\coordinate(a_1) at (0, 0); 
\coordinate(a_2) at (1.2, 0); 
\coordinate(a_3) at (1.7, 1); 
\coordinate(a_4) at (1.2, 2); 
\coordinate(a_5) at (0, 2); 
\coordinate(a_6) at (-0.5, 1);
\draw (a_1) -- (a_2) -- (a_3)--(a_4)--(a_5)--(a_6)--(a_1);
\draw (a_3)--(a_6);
\draw(a_2)--(a_5);
\foreach \i in {1, 2, 3, 4, 5,6}{
\draw (a_\i)[fill=white]circle(\vr);}
\end{tikzpicture}}
\end{subfigure}
\hspace{0.8cm}
\begin{subfigure}[b]{0.15\textwidth}
       \centering
        \resizebox{\linewidth}{!}{
\begin{tikzpicture}
\def\vr{2pt}
\def\len{1}

\coordinate(a_1) at (0, 0); 
\coordinate(a_2) at (1.2, 0); 
\coordinate(a_3) at (1.7, 1); 
\coordinate(a_4) at (1.2, 2); 
\coordinate(a_5) at (0, 2); 
\coordinate(a_6) at (-0.5, 1);
\draw (a_1) -- (a_2) -- (a_3)--(a_4)--(a_5)--(a_6)--(a_1);
\draw (a_3)--(a_6);
\draw(a_2)--(a_5);
\draw(a_1)--(a_4);
\foreach \i in {1, 2, 3, 4, 5,6}{
\draw (a_\i)[fill=white]circle(\vr);}
\end{tikzpicture}}
\end{subfigure}
\caption{The graphs from $\mathcal{C}_6$}
\label{fig:C6}
\end{figure}
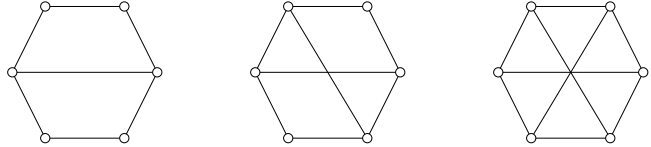

\begin{theorem}~\cite{kr-2019}
\label{thm:kriticni_CIKLI_rall_klavzar_izrek}
If $G$ is a graph that contains a cycle $C_n$, $n \geq 5$, $n \not\equiv 0\pmod{4}$, then $G$ is $4$-$\pch$-vertex-critical if and only if one of the following holds. 
\begin{itemize}
\item $G \in \mathcal{C}_5$;
\item $G \in \mathcal{C}_6$;
\item $G \in \mathcal{C}$.
\end{itemize}
\end{theorem}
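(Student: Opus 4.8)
The plan is to prove the two implications separately. For the backward direction I would check directly that every graph in the three families is $4$-$\pch$-vertex-critical: each such $G$ contains $C_5$ or $C_6$, so $\pch(G)\ge 4$, and since each member is bipartite or has diameter $2$ a $4$-packing coloring is written down at once, giving $\pch(G)=4$; then for each vertex $v$ one observes that $G-v$ is one of $P_k$, $C_4$, $K_4-e$, $K_{2,3}$, or a triangle with a short pendant path, all of which have $\pch=3$ (the triangle case by Lemma~\ref{lema:trikotnik+pot=3barve}). For $G\in\mathcal{C}$ this is immediate, since $\pch(C_n)=4$ and $\pch(C_n-u)=\pch(P_{n-1})=3$.

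For the forward direction, let $G$ be $4$-$\pch$-vertex-critical with $C_n\subseteq G$, $n\ge 5$, $n\not\equiv 0\pmod 4$. First, $G$ is connected (otherwise deleting a vertex from a component not attaining $\pch(G)$ does not lower $\pch$). The key reduction is that $C_n$ is a \emph{spanning} cycle of $G$: if some $v\in V(G)\setminus V(C_n)$ existed, then $C_n\subseteq G-v$, so $\pch(G-v)\ge\pch(C_n)=4=\pch(G)$, contradicting vertex-criticality. Thus $V(G)=V(C_n)$ and $G=C_n+F$ for a set $F$ of chords; if $F=\emptyset$ then $G\in\mathcal{C}$ and we are done, so assume $F\ne\emptyset$.

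The same idea constrains the chords. A chord $e=xy$ splits $C_n$ into arcs of lengths $a\le b$ with $a+b=n$ and $a,b\ge 2$, giving cycles $C_{a+1}$ and $C_{b+1}$, each of which misses the interior of the other arc and is therefore non-spanning; were $a+1$ or $b+1$ at least $5$ and $\not\equiv 0\pmod 4$, deleting a vertex outside it would again contradict criticality. Hence $a+1,b+1\in\{3,4\}\cup\{4k:k\ge 2\}$, and when $n\ge 7$ this forces $b\ge 7$ (because $b\ge\lceil n/2\rceil\ge 4$, so $b+1\equiv 0\pmod 4$). The crux is then the coloring lemma: \emph{a cycle $C_m$ with $m=3$ or $m\equiv 0\pmod 4$, carrying pendant paths of length $\ge 2$ at two adjacent vertices, has $\pch\ge 4$.} One proves it by tracing a hypothetical $3$-packing coloring; since a $3$-packing coloring of $C_3$ uses all of $\{1,2,3\}$ and a $3$-packing coloring of $C_{4k}$ is rigid up to rotation (it must be $1,2,1,3,1,2,1,3,\dots$), the branch vertices are forced to avoid color $1$ (and in the triangle case to take color $3$), which is incompatible with their being adjacent. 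Granting this, when $n\ge 7$ and $F\ne\emptyset$ one picks a chord $e=xy$ and deletes an interior vertex $w$ of its long arc so that the two remaining sub-paths each have $\ge 2$ edges (possible as $b\ge 7$); then $G-w$ contains $C_{a+1}$ with $a+1\in\{3\}\cup 4\mathbb{Z}$ together with two pendant paths of length $\ge 2$ at the adjacent vertices $x,y$, giving $\pch(G-w)\ge 4$, a contradiction. Hence $n\in\{5,6\}$.

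It then remains to settle the finite cases $n=5$ and $n=6$. For $n=5$, $G=C_5+F$ with $F$ a set of chords of $C_5$; a case check over $|F|=1,2,3$ (up to isomorphism) produces exactly the four graphs of $\mathcal{C}_5$ as the vertex-critical ones, while $|F|\ge 4$ is excluded because then $G-v\supseteq K_4$ for a suitable $v$ (and $\pch(K_5)=5$). For $n=6$, a short chord $x_ix_{i+2}$ of $C_6$ creates a non-spanning $C_5$ (omitting $x_{i+1}$), which is forbidden as above, so $F$ consists only of the long chords $x_1x_4,x_2x_5,x_3x_6$; running over $|F|=1,2,3$ gives precisely the three graphs of $\mathcal{C}_6$ (the case $|F|=3$ being $K_{3,3}$). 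Altogether $G\in\mathcal{C}\cup\mathcal{C}_5\cup\mathcal{C}_6$. I expect the main obstacle to be, besides setting up the rigidity/coloring lemma cleanly, the bookkeeping when $F$ has more than one chord and $n\ge 7$: there one must ensure the deleted vertex lies on a chord-free stretch of the long arc so that the two branches really are pendant, which needs a short additional argument (choosing the chord and deleted vertex carefully, or extracting a non-spanning bad cycle directly from two interacting chords).
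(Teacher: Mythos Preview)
The paper does not give its own proof of this theorem: it is quoted from Klav\v zar and Rall~\cite{kr-2019} and used as a black box. So there is no in-paper argument to compare against.

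Your proposal is a correct and self-contained route. The spanning-cycle reduction and the chord-length constraint $a+1,b+1\in\{3,4\}\cup 4\mathbb{Z}_{\ge 2}$ are exactly right, and for $n\ge 7$ this does force the long arc to have $b\ge 7$. The coloring lemma you isolate is also true: for $C_{4k}$ the $3$-packing coloring is rigid (pattern $1,2,1,3$), so one of any two adjacent branch vertices carries colour~$1$ and even a single pendant leaf there is uncolourable; for $C_3$ one checks that with pendant paths of length~$\ge 2$ at two vertices one of the branch vertices is forced to colour~$2$ and its second pendant vertex has no colour. The subgraph extraction you describe (short arc plus chord plus the two path stubs) is enough; you do not need the deleted vertex to lie on a chord-free stretch, because you only need this configuration to sit inside $G-w$ as a subgraph and then invoke heredity. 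So the ``main obstacle'' you flag is not actually an obstacle.

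Two small points to tighten. First, ``each such $G$ contains $C_5$ or $C_6$'' is not literally true for $G\in\mathcal{C}$ (e.g.\ $C_7$), but you treat $\mathcal{C}$ separately anyway. Second, for $n=5$ with three chords there are two isomorphism types (according to whether the two \emph{missing} chords share a vertex); only the one in which they do not share a vertex lies in $\mathcal{C}_5$, while in the other type deleting the degree-$2$ vertex leaves $K_4$, so it is not vertex-critical --- make this explicit in your finite check.
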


We continue by presenting yet another partial characterization of $4$-$\pch$-vertex-critical graphs. 
Let $\mathcal{D}$ be the class of graphs that contain exactly one cycle and have an arbitrary number of leaves  attached to each of the vertices of the cycle. Next, recall that the {\em net graph} is obtained by attaching a single leaf to each vertex of $K_3$. 

\begin{theorem}~\cite{kr-2019}
\label{thm:kriticni_rall_klavzar_izrek}
A graph $G \in \mathcal{D}$ is a $4$-$\chi_\rho$-vertex-critical graph if and only if $G$ is one of the following graphs:
\begin{itemize}
\item $G \in \mathcal{C}$;
\item $G$ is the net graph;
\item $G$ is obtained by attaching a single leaf to two adjacent vertices of $C_4$;
\item $G$ is obtained by attaching a single leaf to two vertices at distance $3$ on $C_8$.
\end{itemize}
\end{theorem}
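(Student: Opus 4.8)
\medskip
\noindent{\bf Proof proposal.}
Throughout, let $C$ be the unique cycle of $G\in\mathcal D$, put $n=|V(C)|$, and let $S\subseteq V(C)$ be the set of cycle vertices carrying at least one leaf. Since $\chi_\rho$ is hereditary, being $4$-$\chi_\rho$-vertex-critical means exactly that $\chi_\rho(G)=4$ while $\chi_\rho(G-x)\le 3$ for every vertex $x$. The workhorse of the argument is a description, read off from Proposition~\ref{goddard-pch=3}, of which members of $\mathcal D$ lie in $\mathcal G_3$: (a) if $n=3$, then $G\in\mathcal G_3$ iff some triangle vertex carries no leaf (in a $\mathcal G_3$-graph, the vertex of a triangle created by a $T$-add which has degree $2$ carries no leaf); (b) if $4\mid n$, then $G\in\mathcal G_3$ iff $S$ is contained in one class of the bipartition of $C_n$ (such a $C_n$ can arise only by subdividing an even cycle of the multigraph $H$, after which leaves may be attached only to the ``old'' vertices of $H$, which form one class); (c) if $n\ge 5$ and $4\nmid n$, then $G\notin\mathcal G_3$, because $C_n\subseteq G$. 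I will also use the observation that, for a path $P_m=p_1\cdots p_m$ with pendant leaves at $p_i$ and $p_j$ ($2\le i<j\le m-1$), the leaves force $c(p_i)\ne 1\ne c(p_j)$ in every $3$-packing colouring $c$ (if $c(p_i)=1$ then $c(p_{i-1}),c(p_{i+1})\in\{2,3\}$, and neither colour is then available for the leaf at $p_i$).

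\smallskip
\emph{The ``if'' part.} For $G\in\mathcal C$ we have $\chi_\rho(G)=\chi_\rho(C_n)=4$ and $G-x=P_{n-1}$ with $n-1\ge 4$, so $\chi_\rho(G-x)=3$. For each of the other three graphs, $\chi_\rho(G)\ge 4$ follows from (a) or (b), and an explicit $4$-packing colouring gives $\chi_\rho(G)=4$; then one checks $\chi_\rho(G-x)\le 3$ for all $x$: deleting a leaf yields a graph again in $\mathcal D$ that now lies in $\mathcal G_3$ (the triangle acquires a degree-$2$ vertex, resp.\ $S$ collapses into one bipartition class), while deleting a vertex of $C$ yields $P_k\cup K_1$ or a small caterpillar (a path on at most $9$ vertices with one or two pendant leaves), each $3$-colourable by an explicit colouring, cf.\ Lemmas~\ref{lema:trikotnik+pot=3barve} and~\ref{drevoT=3_barve}.

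\smallskip
\emph{The ``only if'' part.} Let $G\in\mathcal D$ be $4$-$\chi_\rho$-vertex-critical. If $n\ge 5$ and $4\nmid n$, then $G$ contains a member of $\mathcal C$, so Theorem~\ref{thm:kriticni_CIKLI_rall_klavzar_izrek} applies; the only graphs listed there lying in $\mathcal D$ are the cycles themselves, so $G\in\mathcal C$ (equivalently: a leaf $x$ would give $C_n\subseteq G-x$, so $\chi_\rho(G-x)\ge 4$). If $n=3$, then by (a) every triangle vertex carries a leaf, and if one carried two leaves then deleting one would leave every triangle vertex with a leaf, so $G-x\notin\mathcal G_3$, contradicting criticality; hence $G$ is the net graph. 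If $4\mid n$, then by (b) $S$ meets both bipartition classes of $C_n$; if some vertex of $S$ carried two leaves, or if $|S|\ge 3$, one could delete a leaf so that the remaining graph (still of the form ``$C_n$ with leaves'') still has its leaf set meeting both classes, hence is not in $\mathcal G_3$ --- contradiction. Thus $G$ is $C_n$ with a single leaf at each of two vertices $u,v$, where $u,v$ split $C$ into arcs of lengths $a\le b$, both odd, $a+b=n$. Finally, deleting a vertex on one of these arcs turns $G$ into a caterpillar: $P_{n-1}$ with a pendant leaf at each of the images of $u,v$ (plus an isolated vertex if the deleted vertex is $u$ or $v$), the two leaves sitting at distance equal to the length of the \emph{other} arc, i.e.\ at an odd distance. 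Here one needs the technical fact that a path with two pendant leaves at positions $p_i,p_j$ with $j-i$ odd, $3\le i$ and $j\le m-2$ (``both legs well inside the path''), has $\chi_\rho\ge 4$. Granting this: if $b\ge 7$ (which holds whenever $4\mid n$ and $n\ge 12$, and also for $n=8$ with $\{a,b\}=\{1,7\}$), one can delete a vertex of the $b$-arc so that the two legs land at positions $3\le i$, $j\le m-2=n-3$ inside $P_{n-1}$, forcing $\chi_\rho(G-x)\ge 4$, contradicting criticality. Hence $a,b\le 5$; with $a+b=n$, $a,b$ odd and $4\mid n$, the only possibilities are $n=4$ with $\{a,b\}=\{1,3\}$ and $n=8$ with $\{a,b\}=\{3,5\}$, i.e.\ $G$ is a single leaf attached to two adjacent vertices of $C_4$, or to two vertices at distance $3$ of $C_8$. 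These two graphs survive all vertex deletions (a finite check, using the ``if'' part), so they are $4$-$\chi_\rho$-vertex-critical. Combining all cases yields the four families. $\hfill\square$

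\smallskip
\noindent The step I expect to be the main obstacle is the caterpillar fact invoked above: deciding exactly which ``paths with two pendant legs'' have packing chromatic number $3$, equivalently which pairs of vertices of a path can simultaneously avoid colour $1$ in a $3$-packing colouring. The underlying phenomenon is that in any $3$-packing colouring of a path the colour-$1$ vertices are essentially forced into an arithmetic pattern of common difference $2$, so that ``flipping the phase'' between the two legs (necessary precisely when they sit at odd distance) can be done only near an end of the path; making this precise and dealing with the boundary positions is a genuinely case-laden argument, and is the technical heart of the theorem. Everything else --- the $\mathcal G_3$-bookkeeping, the reduction to at most two single leaves, and the finite verifications --- is routine given Proposition~\ref{goddard-pch=3}, Theorem~\ref{thm:kriticni_CIKLI_rall_klavzar_izrek}, and the preliminary lemmas.
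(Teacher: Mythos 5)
This statement is quoted from~\cite{kr-2019}; the paper itself gives no proof of it, so your proposal can only be judged on its own terms. Your architecture is sound and, as far as I can check it, correct: the three-way split on $n$, the $\mathcal G_3$-membership criteria (a)--(c) read off from Proposition~\ref{goddard-pch=3}, the reduction via leaf-deletion to exactly two single leaves in opposite bipartition classes when $4\mid n$, and the final arithmetic ($a,b$ odd, $a+b\equiv 0\pmod 4$, $b\le 5$) that isolates the $C_4$ and $C_8$ graphs all hold up. The leaf observation ($c(p_i)\ne 1$ for an interior leg, since its two path-neighbours are forced to be $\{2,3\}$ and then the leaf has no colour) is also correct.

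The one genuine gap is the step you yourself flag: you \emph{grant} that a path $p_1\cdots p_m$ with pendant legs at $p_i,p_j$, $j-i$ odd, $i\ge 3$, $j\le m-2$, has $\chi_\rho\ge 4$, and without it the elimination of all arcs with $b\ge 7$ (hence of every candidate except $n=4$, $\{a,b\}=\{1,3\}$ and $n=8$, $\{a,b\}=\{3,5\}$) does not go through. The fact is true, and it is less case-laden than you fear. Suppose $c$ is a $3$-packing colouring. If $c(p_k)=2$ and $c(p_{k+1})=3$ for some $k\ge 3$, then $p_{k-1}$ is adjacent to a $2$ and at distance $2$ from a $3$, so $c(p_{k-1})=1$; but then $p_{k-2}$ is adjacent to a $1$, at distance $2$ from a $2$ and at distance $3$ from a $3$, so it has no colour. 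Symmetrically, $c(p_k)=3$, $c(p_{k+1})=2$ forces $k\ge m-2$. Hence two adjacent path vertices can both avoid colour $1$ only within the first two or last two positions. Now list the positions between $i$ and $j$ whose colour is not $1$: consecutive such positions differ by $1$ or $2$ (three consecutive $1$'s on a path are impossible, and so is a gap of $\ge 3$ since two of the intermediate $1$'s would be adjacent), and since $c(p_i),c(p_j)\ne 1$ and $j-i$ is odd, at least one gap equals $1$, i.e.\ there is an adjacency of non-$1$ vertices at some position $k$ with $i\le k<k+1\le j$. By the previous sentence $k\le 2$ or $k\ge m-2$, contradicting $i\ge 3$ and $j\le m-2$. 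Inserting this argument (and writing out the finite colouring checks you wave at in the ``if'' part) would complete your proof.
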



In the sequel of this section, we provide a complete characterization of $4$-$\pch$-vertex-critical graphs. Recall that each $\chi_\rho$-vertex-critical graph is connected~\cite{kr-2019}. \\

First, we prove that the graphs shown in Fig.~\ref{fig:4kriticni} are $4$-$\pch$-vertex-critical. 


\begin{figure}[h]
\begin{subfigure}[b]{0.15\textwidth}
       \centering
        \resizebox{\linewidth}{!}{
\begin{tikzpicture}
\def\vr{2pt}
\def\len{1}

\coordinate(a_1) at (0, 0); 
\coordinate(a_2) at (1, 0); 
\coordinate(a_3) at (1, 1); 
\coordinate(a_4) at (0, 1); 
\draw (a_1) -- (a_2) -- (a_3)--(a_4)--(a_1)--(a_3);
\draw (a_2)--(a_4);
\foreach \i in {1, 2, 3, 4}{
\draw (a_\i)[fill=white]circle(\vr);}
\draw(a_1)node[left]{\scriptsize{$a$}};
\draw(a_2)node[right]{\scriptsize{$b$}};
\draw(a_3)node[right]{\scriptsize{$c$}};
\draw(a_4)node[left]{\scriptsize{$d$}};
\end{tikzpicture}}
\caption{$K_4$}
\label{Trikotnik_prvi_K4}
\end{subfigure}
\hspace{0.25cm}
\begin{subfigure}[b]{0.23\textwidth}
       \centering
        \resizebox{\linewidth}{!}{
\begin{tikzpicture}
\def\vr{2pt}
\def\len{1}
\coordinate(b_1) at (0, 0); 
\coordinate(b_2) at (1, 0); 
\coordinate(b_3) at (2, 0); 
\coordinate(b_4) at (1, 1); 
\coordinate(b_5) at (0, 1);
\draw (b_1) -- (b_2) -- (b_3);
\draw (b_1)--(b_5)--(b_4)--(b_2);
\draw (b_1)--(b_4);
\foreach \i in {1, 2, 3, 4, 5}{
\draw (b_\i)[fill=white]circle(\vr);}
\draw(b_1)node[left]{\scriptsize{$a$}};
\draw(b_2)node[above right]{\scriptsize{$b$}};
\draw(b_3)node[right]{\scriptsize{$c$}};
\draw(b_4)node[right]{\scriptsize{$d$}};
\draw(b_5)node[left]{\scriptsize{$e$}};
\end{tikzpicture}}
\caption{$H_1$}
\label{Trikotnik_drugi}
\end{subfigure}
\hspace{0.25cm}
%
%
\begin{subfigure}[b]{0.20\textwidth}
       \centering
        \resizebox{\linewidth}{!}{
\begin{tikzpicture}
\def\vr{2pt}
\def\len{1}

\coordinate(c_1) at (0, 0); 
\coordinate(c_2) at (0, 1); 
\coordinate(c_3) at (0.8, 0.5); 
\coordinate(c_4) at (1.6, 0); 
\coordinate(c_5) at (1.6, 1);
\draw (c_1) -- (c_3) -- (c_4)--(c_5)--(c_3)--(c_2)--(c_1);
\foreach \i in {1, 2, 3, 4, 5}{
\draw (c_\i)[fill=white]circle(\vr);}
\draw(c_1)node[left]{\scriptsize{$a$}};
\draw(c_2)node[left]{\scriptsize{$e$}};
\draw(c_3)node[below]{\scriptsize{$b$}};
\draw(c_4)node[right]{\scriptsize{$c$}};
\draw(c_5)node[right]{\scriptsize{$d$}};
\end{tikzpicture}}
\caption{$H_2$}
\label{Trikotnik_tretji}
\end{subfigure}
\hspace{0.25cm}
\vspace{0.3cm}
%
%
\begin{subfigure}[b]{0.30\textwidth}
       \centering
        \resizebox{\linewidth}{!}{
\begin{tikzpicture}
\def\vr{2pt}
\def\len{1}

\coordinate(a_1) at (0, 0); 
\coordinate(a_2) at (1, 0); 
\coordinate(a_3) at (2, 0); 
\coordinate(a_4) at (3, 0); 
\coordinate(a_5) at (1.5, 0.8); 
\coordinate(a_6) at (1.5, 1.6); 
\draw (a_1) -- (a_2) -- (a_3)--(a_4);
\draw (a_2)--(a_5)--(a_3);
\draw (a_5)--(a_6);
\foreach \i in {1, 2, 3, 4, 5, 6}{
\draw (a_\i)[fill=white]circle(\vr);}
\draw(a_1)node[left]{\scriptsize{$d$}};
\draw(a_2)node[above left]{\scriptsize{$a$}};
\draw(a_3)node[above right]{\scriptsize{$b$}};
\draw(a_4)node[right]{\scriptsize{$e$}};
\draw(a_5)node[right]{\scriptsize{$c$}};
\draw(a_6)node[right]{\scriptsize{$f$}};
\end{tikzpicture}}
\caption{$H_3$}
\label{Trikotnik_cetrti}
\end{subfigure}
\hspace{0.25cm}
\vspace{0.3cm}
%
%
%
\begin{subfigure}[b]{0.47\textwidth}
       \centering
        \resizebox{\linewidth}{!}{
\begin{tikzpicture}
\def\vr{2pt}
\def\len{1}

\coordinate(a_1) at (0, 0); 
\coordinate(a_2) at (1, 0); 
\coordinate(a_3) at (2, 0); 
\coordinate(a_4) at (3, 0); 
\coordinate(a_5) at (4, 0); 
\coordinate(a_6) at (5, 0); 
\coordinate(a_7) at (2.5, 0.8); 
\draw (a_1) -- (a_2) -- (a_3)--(a_4)--(a_5)--(a_6);
\draw (a_3)--(a_7)--(a_4);
\foreach \i in {1, 2, 3, 4, 5, 6, 7}{
\draw (a_\i)[fill=white]circle(\vr);}
\draw(a_1)node[left]{\scriptsize{$a$}};
\draw(a_2)node[above]{\scriptsize{$b$}};
\draw(a_3)node[above left]{\scriptsize{$c$}};
\draw(a_4)node[above right]{\scriptsize{$d$}};
\draw(a_5)node[above]{\scriptsize{$e$}};
\draw(a_6)node[right]{\scriptsize{$f$}};
\draw(a_7)node[left]{\scriptsize{$g$}};
\end{tikzpicture}}
\caption{$H_4$}
\label{h4}
\end{subfigure}
\hspace{0.3cm}
\begin{subfigure}[b]{0.22\textwidth}
       \centering
        \resizebox{\linewidth}{!}{
\begin{tikzpicture}
\def\vr{2pt}
\def\len{1}

\coordinate(a_1) at (0, 0); 
\coordinate(a_2) at (1, 0); 
\coordinate(a_3) at (1, 1); 
\coordinate(a_4) at (0, 1); 
\coordinate(a_5) at (2, 0); 
\coordinate(a_6) at (2, 1); 
\draw (a_6)--(a_3)--(a_4)--(a_1) -- (a_2) -- (a_5);
\draw (a_2)--(a_3);
\foreach \i in {1, 2, 3, 4, 5, 6}{
\draw (a_\i)[fill=white]circle(\vr);}
\draw(a_1)node[left]{\footnotesize{$a$}};
\draw(a_2)node[above right]{\footnotesize{$b$}};
\draw(a_3)node[below right]{\footnotesize{$c$}};
\draw(a_4)node[left]{\footnotesize{$d$}};
\draw(a_5)node[right]{\footnotesize{$e$}};
\draw(a_6)node[right]{\footnotesize{$f$}};
\end{tikzpicture}}
\caption{$H_5$}
\label{h5}
\end{subfigure}
\hspace{0.2cm}
%
%
%
%
%
\begin{subfigure}[b]{0.21\textwidth}
       \centering
        \resizebox{\linewidth}{!}{
\begin{tikzpicture}
\def\vr{2pt}
\def\len{1}

\coordinate(a_1) at (0, 0); 
\coordinate(a_2) at (1, 0); 
\coordinate(a_3) at (1, 1); 
\coordinate(a_4) at (0, 1); 
\coordinate(a_5) at (2, 0); 
\coordinate(a_6) at (0.5, 0.5); 
\draw (a_2)--(a_3)--(a_4)--(a_1) -- (a_2) -- (a_5);
\draw (a_1)--(a_6)--(a_3);
\foreach \i in {1, 2, 3, 4, 5, 6}{
\draw (a_\i)[fill=white]circle(\vr);}
\draw(a_1)node[left]{\scriptsize{$a$}};
\draw(a_2)node[above right]{\scriptsize{$b$}};
\draw(a_3)node[right]{\scriptsize{$e$}};
\draw(a_4)node[left]{\scriptsize{$d$}};
\draw(a_5)node[above]{\scriptsize{$f$}};
\draw(a_6)node[left]{\scriptsize{$c$}};
\end{tikzpicture}}
\caption{$H_6$}
\label{h6}
\end{subfigure}
\hspace{0.8cm}
%
%
%
%
\begin{subfigure}[b]{0.43\textwidth}
       \centering
        \resizebox{\linewidth}{!}{
\begin{tikzpicture}
\def\vr{2pt}
\def\len{1}

\coordinate(a_1) at (0, 0); 
\coordinate(a_2) at (1, 0); 
\coordinate(a_3) at (2, 0); 
\coordinate(a_4) at (3, 0); 
\coordinate(a_5) at (4, 0); 
\coordinate(a_6) at (5, 0); 
\coordinate(a_7) at (4, 1); 
\coordinate(a_8) at (3, 1); 
\coordinate(a_9) at (2, 1); 
\coordinate(x) at (1, 1); 
\draw (a_1)--(a_2)--(a_3)--(a_4)--(a_5) -- (a_6);
\draw (a_5)--(a_7)--(a_8)--(a_9)--(x) -- (a_2);
\foreach \i in {1, 2, 3, 4, 5, 6, 7, 8, 9}{
\draw (a_\i)[fill=white]circle(\vr);}
\draw (x)[fill=white]circle(\vr);
\draw(a_1)node[above]{\scriptsize{$a$}};
\draw(a_2)node[above left]{\scriptsize{$b$}};
\draw(a_3)node[above]{\scriptsize{$c$}};
\draw(a_4)node[above]{\scriptsize{$d$}};
\draw(a_5)node[above right]{\scriptsize{$e$}};
\draw(a_6)node[above]{\scriptsize{$f$}};
\draw(a_7)node[right]{\scriptsize{$g$}};
\draw(a_8)node[below]{\scriptsize{$h$}};
\draw(a_9)node[below]{\scriptsize{$i$}};
\draw(x)node[left]{\scriptsize{$j$}};
\end{tikzpicture}}
\caption{$H_7$}
\label{h7}
\end{subfigure}
\vspace{0.3cm}
%
%
\hspace{0.8cm}
\begin{subfigure}[b]{0.43\textwidth}
       \centering
        \resizebox{\linewidth}{!}{
\begin{tikzpicture}
\def\vr{2pt}
\def\len{1}

\coordinate(a_1) at (0, 0); 
\coordinate(a_2) at (1, 0); 
\coordinate(a_3) at (2, 0); 
\coordinate(a_4) at (3, 0); 
\coordinate(a_5) at (4, 0); 
\coordinate(a_6) at (5, 0); 
\coordinate(a_7) at (4, 1); 
\coordinate(a_8) at (3, 1); 
\coordinate(a_9) at (2, 1); 
\coordinate(x) at (1, 1); 
\draw (a_1)--(a_2)--(a_3)--(a_4)--(a_5) -- (a_6);
\draw (a_5)--(a_7)--(a_8)--(a_9)--(x) -- (a_2);
\draw (a_1)--(a_9);
\foreach \i in {1, 2, 3, 4, 5, 6, 7, 8, 9}{
\draw (a_\i)[fill=white]circle(\vr);}
\draw (x)[fill=white]circle(\vr);
\draw(a_1)node[above]{\scriptsize{$a$}};
\draw(a_2)node[above left]{\scriptsize{$b$}};
\draw(a_3)node[above]{\scriptsize{$c$}};
\draw(a_4)node[above]{\scriptsize{$d$}};
\draw(a_5)node[above right]{\scriptsize{$e$}};
\draw(a_6)node[above]{\scriptsize{$f$}};
\draw(a_7)node[right]{\scriptsize{$g$}};
\draw(a_8)node[below]{\scriptsize{$h$}};
\draw(a_9)node[below]{\scriptsize{$i$}};
\draw(x)node[left]{\scriptsize{$j$}};
\end{tikzpicture}}
\caption{$H_8$}
\label{h8}
\end{subfigure}
%

%
%
\begin{subfigure}[b]{0.36\textwidth}
       \centering
        \resizebox{\linewidth}{!}{
\begin{tikzpicture}
\def\vr{2pt}
\def\len{1}

\coordinate(a_1) at (0, 0); 
\coordinate(a_2) at (1, 0); 
\coordinate(a_3) at (2, 0); 
\coordinate(a_4) at (3, 0); 
\coordinate(a_5) at (4, 0); 
\coordinate(a_6) at (1, 1); 
\coordinate(a_7) at (2, 1); 
\coordinate(a_8) at (3, 1); 
\draw (a_1)--(a_2)--(a_3)--(a_4)--(a_5);
\draw (a_2)--(a_6);
\draw (a_3)--(a_7);
\draw (a_4)--(a_8);
\foreach \i in {1, 2, 3, 4, 5, 6, 7, 8, 9}{
\draw (a_\i)[fill=white]circle(\vr);}
\draw(a_1)node[above]{\scriptsize{$a$}};
\draw(a_2)node[above left]{\scriptsize{$b$}};
\draw(a_3)node[above left]{\scriptsize{$c$}};
\draw(a_4)node[above left]{\scriptsize{$d$}};
\draw(a_5)node[above]{\scriptsize{$e$}};
\draw(a_6)node[left]{\scriptsize{$f$}};
\draw(a_7)node[left]{\scriptsize{$g$}};
\draw(a_8)node[left]{\scriptsize{$h$}};
\end{tikzpicture}}
\caption{$H_9$}
\label{h9}
\end{subfigure}

\caption{$4$-$\pch$-vertex-critical graphs}
\label{fig:4kriticni}
\end{figure}

%
%

\begin{theorem}
The graphs $K_4, H_1, H_2, H_3, H_4, H_5, H_6, H_7, H_8$ and $H_9$ shown in Fig.~\ref{fig:4kriticni} are $4$-$\pch$-vertex-critical. 
\label{izrek_4kriticni_grafi}
\end{theorem}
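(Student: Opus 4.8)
The plan is to verify, for each of the ten graphs $G \in \{K_4, H_1, \dots, H_9\}$, the two defining conditions of a $4$-$\pch$-vertex-critical graph: (i) $\pch(G)=4$, and (ii) $\pch(G-x)\le 3$ for every $x\in V(G)$. For the lower bound $\pch(G)\ge 4$ in (i), I would argue structurally: $K_4$ needs $4$ colors since $\pch(K_n)=n$; for each $H_i$ I would either exhibit an induced subgraph known to have $\pch=4$ (a cycle $C_n$ with $n\ge 5$, $n\not\equiv 0\pmod 4$, or a graph previously shown $\pch$-critical), or argue directly that no $3$-packing coloring exists. Concretely, a $3$-packing coloring assigns color $1$ to an independent set, color $2$ to a set of pairwise-distance-$>2$ vertices, and color $3$ to a set of pairwise-distance-$>3$ vertices; one checks by a short case analysis (branching on which vertex gets color $3$, then on the color-$2$ vertices) that each $H_i$ admits no such assignment. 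For several of the $H_i$ one can shortcut this: e.g. $H_1,H_2$ contain a triangle with extra structure forcing a $C_5$-type obstruction, $H_4,H_7,H_8$ visibly contain a long cycle or a $C_5$/$C_6$-with-chord from the families $\mathcal C_5,\mathcal C_6$, and $H_9$ (a caterpillar-like tree, if it is a tree) would instead need the direct argument.

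For the upper bound $\pch(G)\le 4$ in (i), I would simply display an explicit $4$-packing coloring of each graph using the vertex labels $a,b,c,\dots$ fixed in Fig.~\ref{fig:4kriticni}; this is routine and I would present it as a short table of color assignments, checking the distance constraints for colors $2,3,4$ by inspection (each graph has at most $10$ vertices and small diameter).

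For condition (ii), the key observation is that $\mathcal G_3$ is completely characterized (Proposition~\ref{goddard-pch=3}) and that Lemmas~\ref{lema:trikotnik+pot=3barve} and~\ref{drevoT=3_barve} already certify that certain small graphs built from $K_3$, $C_4$, and paths have $\pch=3$. So for each $G$ and each vertex $x$, I would identify $G-x$ as one of: a graph of the form $X_n$ or $Y_n$ (handled by Lemma~\ref{lema:c4+pot=3barve}), a tree that is not a star (hence $\pch\le 3$ by the path/tree facts and Goddard et al.), the graph $T$ of Fig.~\ref{fig:T} or a subgraph thereof, a cycle $C_n$ with $n=3$ or $4\mid n$, or more generally a member of $\mathcal G_3$ via Proposition~\ref{goddard-pch=3}; in the few remaining cases I would just exhibit an explicit $3$-packing coloring of $G-x$. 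By symmetry of each $H_i$ I only need to treat one representative vertex from each orbit of the automorphism group, which cuts the bookkeeping substantially.

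The main obstacle is the sheer volume of small cases — ten graphs, each with several vertex-deletion subcases for (ii) and a no-$3$-coloring argument for (i) — rather than any single hard step; the risk is an oversight in one of the routine verifications. To keep this manageable I would organize the proof as a table (rows = graphs, columns = the $4$-coloring witnessing $\pch\le 4$, the argument for $\pch\ge 4$, and, for each automorphism orbit of vertices, the identification of $G-x$ with a known $\pch\le 3$ graph together with a $3$-coloring witness), and lean on Lemmas~\ref{lema:c4+pot=3barve}, \ref{drevoT=3_barve}, \ref{lemma:cikel+trikotnik=neustrezen_cikel} and Proposition~\ref{goddard-pch=3} to avoid recomputing $\pch=3$ from scratch each time.
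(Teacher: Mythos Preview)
Your overall strategy---verify $\pch(G)=4$ by a lower-bound argument plus an explicit $4$-coloring, then check $\pch(G-x)\le 3$ vertex by vertex (up to symmetry) using the known small $\pch=3$ graphs---is exactly what the paper does. Two corrections are worth noting, though.

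First, your claim that $H_4$, $H_7$, $H_8$ ``visibly contain a long cycle or a $C_5$/$C_6$-with-chord'' is false. The only cycle in $H_4$ is a triangle; the only cycle in $H_7$ is a $C_8$ (so $4\mid n$); and the cycles in $H_8$ all have length $4$ or $8$. None of these graphs contains a $C_n$ with $n\ge 5$, $n\not\equiv 0\pmod 4$, so the forbidden-cycle shortcut does not apply and you must argue $\pch\ge 4$ differently. The paper handles $H_4$ by a direct two-line case analysis (forcing $\{c_4(c),c_4(d)\}=\{2,3\}$ and deriving a contradiction at $f$), handles $H_7$ by invoking Theorem~\ref{thm:kriticni_rall_klavzar_izrek} (since $H_7\in\mathcal D$ is the $C_8$ with two leaves at distance~$3$), and handles $H_8$ by observing it contains $H_7$ as a subgraph.

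Second, the paper disposes of $H_3$, $H_5$, $H_7$ in one line via Theorem~\ref{thm:kriticni_rall_klavzar_izrek} (all three lie in the class $\mathcal D$), which is a cleaner shortcut than re-doing the direct analysis you sketch. For the remaining graphs the paper proceeds exactly as you propose: short structural arguments for $\pch\ge 4$, explicit $4$-colorings, and for each vertex deletion either an identification with a known $\pch\le 3$ graph ($X_n$, $Y_n$, paths, $H_7-v$, etc.) or an explicit $3$-coloring.
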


\begin{proof}
Theorem \ref{thm:kriticni_rall_klavzar_izrek} implies that $H_3, H_5$ and $H_7$ are $4$-$\pch$-vertex-critical. Clearly, also $K_4$ is $4$-$\pch$-vertex-critical. 

Now, consider the graph $H_1$ (see Fig.~\ref{Trikotnik_drugi}). If there exists a $3$-packing coloring $c_1$ of $H_1$, then $c_1(a), c_1(b),c_1(d) \neq 1$ and $a,b,d$ receive pairwise distinct colors by $c_1$. This implies that $c_1$ uses at least $4$ colors, a contradiction to $c_1$ being a $3$-packing coloring of $H_1$. Thus, $\pch(H_1) \geq 4$.
Next, color the vertices of $H_1$ as follows. Let $a$ and $c$ receive color $1$, and let the other vertices of $H_1$ receive distinct colors from $\{2,3,4\}$. In this way, a $4$-packing coloring of $H_1$ is formed. Thus, $\pch(H_1)=4$. 
Further, for any $u \in \{a,b,c,d,e\}$, $H_1-u$ has exactly $4$ vertices, but is not isomorphic to $K_4$, which implies that $\pch(H_1-u) \leq 3$. Hence, $H_1$ is a $4$-$\pch$-vertex-critical graph. 

Next, we prove that $H_2$ (see Fig.~\ref{Trikotnik_tretji}) is $4$-$\pch$-vertex-critical. Since each packing coloring of $H_2$ assigns three distinct colors to the vertices $a,b,e$ and (at most) one of these colors can be used for the vertices $c$ and $d$, we have $\pch(H_2) \geq 4$. 
On the other hand, there exists a $4$-packing coloring of $H_2$. Indeed, let $a$ and $c$ receive color $1$, and let the other vertices of $H_2$ receive different colors from $\{2,3,4\}$. Thus, $\pch(H_2)=4$. We observe that for any $u \in \{a,b,c,d,e\}$, $H_2-u$ has exactly $4$ vertices, but is not isomorphic to $K_4$. Hence, $\pch(H_2-u) \leq 3$ and $H_2$ is a $4$-$\pch$-vertex-critical graph. 

Further, consider the graph $H_4$ (see Fig.~\ref{h4}). Suppose that there exists a $3$-packing coloring $c_4$ of $H_4$. Then, $\{c_4(c),c_4(d)\}=\{2,3\}$. Without loss of generality we may assume that $c_4(c)=3$, $c_4(d)=2$. Then, $c_4(e)=1$ and there is no available color for $f$, a contradiction to our assumption. Thus, $\pch(H_4) \geq 4$. 
Now, color the vertices $a,b,c,d,e,f$ one after another using the colors $1,2,1,3,1,2$ and let $g$ receive a color $4$. This coloring is a $4$-packing coloring of $H_4$, which implies that $\pch(H_4)=4$. 
Next, let $u \in \{a,b,c,d,e,f,g\}$. If $u \in \{c,d,g\}$, then $H_4-u$ is a path or a union of paths. Consequently, $\pch(H_4-u) \leq 3$. If $u \in \{a,b\}$, then color the vertices $g$ and $e$ with color $1$, $c$ and $f$ with color $2$, $d$ with color $3$ and the other vertex with color $1$. This coloring is a $3$-packing coloring of $H_4-u$ and hence, $\pch(H_4-u) \leq 3$. In the case when $u \in \{e,f\}$, analogously we prove that $\pch(H_4-u) \leq 3$. Therefore, $H_4$ is $4$-$\pch$-vertex-critical. 

Next, we claim that $H_6$ (see Fig.~\ref{h6}) is a $4$-$\pch$-vertex-critical graph. If there exists a $3$-packing coloring $c_6$ of $H_6$, then $c_6(a), c_6(b),c_6(e) \neq 1$ and $a,b,e$ receive pairwise distinct colors by $c_6$. This implies that $c_6$ uses at least $4$ colors, a contradiction to $c_6$ being a $3$-packing coloring of $H_6$. Thus, $\pch(H_6) \geq 4$.
Further, by letting $c_6'(b)=c_6'(c)=c_6'(d)=1$, and by assigning different colors from $\{2,3,4\}$ to other vertices of $H_6$, we infer that $c_6'$ is a packing coloring of $H_6$ using $4$ colors. Thus, $\pch(H_6)=4$. 
Now, let $u \in \{a,b,c,d,e,f\}$. If $u \in \{a,e,f\}$, then $c_6'$ restricted to $H_6-u$ is a $3$-packing coloring of $H_6-u$. Hence, $\pch(H_6-u) \leq 3$. 
If $u=b$, then all vertices of $H_6-u$ can be colored with $3$ colors, since $H_6-u$ is the disjoint union of an isolated vertex and a cycle $C_4$. In the case when  $u=c$ (respectively, $u=d$), $H_6-u \cong Y_1$ and Lemma \ref{lema:trikotnik+pot=3barve} implies that $\pch(H_6-u) = 3$. Thus, $H_6$ is $4$-$\pch$-vertex-critical. 

Now, we prove that $H_8$ (see Fig.~\ref{h8}) is $4$-$\pch$-vertex-critical. Since $H_8$ contains a subgraph isomorphic to $H_7$ and $\pch(H_7)=4$, the hereditary property of the packing chromatic number implies that $\pch(H_8) \geq 4$. In order to show that $\pch(H_8)=4$, we form a $4$-packing coloring $c_8$ of $H_8$. Let $c_8(d)=c_8(i)=2$, $c_8(b)=c_8(g)=3$, $c_8(f)=4$ and let the remaining vertices receive color $1$. Since this is a $4$-packing coloring of $H_8$, $\pch(H_8)=4$.
Next, let $u \in \{a,b,c,d,e,f,g,h,i,j\}$ and let $G'$ be a subgraph of $H_8$ induced by $V(H_8) \setminus \{u\}$. 
Note that, if $u=f$, then $c_8$ restricted to $G'$ is a $3$-packing coloring of $G'$, thus $\pch(G') \leq 3$. Further, suppose that $u=e$. Let $c'_8(x)=c_8(x)$ for any $V(G') \setminus \{f\}$ and let $c'_8(f)=1$. Clearly, $c'_8$ is a $3$-packing coloring of $G'$, hence $\pch(G') \leq 3$.
If $u \in \{a,j\}$, then $G'$ is isomorphic to a subgraph of $H_7$ induced by $V(H_7) \setminus \{a\}$. Since $\pch(H_7-a) \leq 3$, we have $\pch(G') \leq 3$. 
Similarly, if $u \in \{b,i\}$, then $G'$ is isomorphic to a subgraph of $H_7$ induced by $V(H_7) \setminus \{i\}$. Since $\pch(H_7-i) \leq 3$, we have $\pch(G') \leq 3$. 
Now, let $u \in \{c,d\}$. In this case, color the vertices $e$ and $i$ with color $2$, $b$ and $g$ with color $3$ and the others with color $1$. Since such coloring is a $3$-packing coloring of $G'$, $\pch(G') \leq 3$. Analogously we prove that $\pch(G') \leq 3$ if $u \in \{h,g\}$. Therefore, $H_8$ is $4$-$\pch$-vertex-critical.

Finally, consider the graph $H_9$ (see Fig.~\ref{h9}). Suppose that there exists a $3$-packing coloring $c_9$ of $H_9$. Clearly, $c_9(b), c_9(c), c_9(d) \neq 1$, which implies that there is no available color for one vertex from $\{b,c,d\}$, a contradiction to $c_9$ being a $3$-packing coloring of $H_9$. This means that $\pch(H_9) \geq 4$. 
Next, we present a $4$-packing coloring $c_9'$ of $H_9$. Color the vertices $a,e,f,g,h$ with color $1$ and let $c_9'(b)=2$, $c_9'(c)=3$, $c_9'(d)=4$. Clearly, $c_9'$ is a $4$-packing coloring of $H_9$, thus $\pch(H_9)=4$. 
Further, let $u \in \{a,b,c,d,e,f,g,h\}$ and let $G'$ be a subgraph of $H_9$ induced by $V(H_9) \setminus \{u\}$. If $u \in \{b,c,d\}$, then $c_9'$ restricted to $G'$ is a $3$-packing coloring of $G'$. Hence, $\pch(G') \leq 3$. 
If $u=a$, then $G'$ can be colored using $3$ colors as follows. Let $b,e,g,h$ receive color $1$, $f$ and $d$ color $2$ and $c$ color $3$. Since this is a $3$-packing coloring of $G'$, $\pch(G') \leq 3$. If $u \in \{e,f,h\}$, the proof is analogous. Finally, let $u=g$. In this case, color the vertices $a,c,e,f,h$ with color $1$, and the remaining two vertices with colors $2$ and $3$. Clearly, this is a $3$-packing coloring of $G'$, which means that $\pch(G')\leq 3$. This completes the proof. 

\qed
\end{proof}

%
Next, we present five infinite families of $4$-$\pch$-vertex-critical graphs. Let $ab$ be an edge of a given graph $G$. The \textit{subdivision of $ab$ $k$-times} is obtained by removing the edge $ab$ from $G$, and adding $k$ new vertices, $a_1, a_2, \ldots, a_k$, and $k+1$ new edges, $aa_1, a_1a_2, \ldots, a_kb$, to $G$. 

The family $\mathcal{F}_1$ contains all graphs that can be constructed from the disjoint union of two copies of $K_3$ as follows. First, we make an edge joining both copies of $K_3$ and then, subdivide this edge $(4k)$-, $(4k+1)$- or $(4k+2)$-times, where $k \geq 0$. This family of graphs is shown in Fig.~\ref{Druzina_F1}.

Further, let $A$ be a copy of the complete graph $K_3$ and let $B$ be a copy of the path $P_3$. The family $\mathcal{F}_2$ (see Fig.~\ref{Druzina_F2}) contains all graphs that can be formed from the disjoint union of $A$ and $B$ by first joining a vertex $u\in V(A)$ with a non-pendant vertex $v \in V(B)$, and then subdividing the edge $uv$ $(4k+2)$-times, where $k$ is an arbitrary non-negative integer. In addition, also the graph with no subdivided edge $uv$ belongs to $\mathcal{F}_2$. 

The family $\mathcal{F}_3$ contains all graphs that can be obtained from the disjoint union of graphs $A$, where
$A \in \{P_4, C_4\}$, and $B$, where $B$ is isomorphic to $K_3$, by first joining a non-pendant vertex $u\in V(A)$ with a vertex $v \in V(B)$, and then subdividing the edge $uv$ $(4k)$-times, where $k$ is an arbitrary positive integer (see Fig.~\ref{Druzina_F3}). 

The graphs from $\mathcal{F}_4$ are formed from the disjoint union of two copies of a path $P_3$, denoted by $A$ and $B$, and one copy of a path $P_2$, denoted by $C$, in the following way. First, we join a non-pendant vertex $u \in V(A)$ with a vertex $v \in V(C)$, and a vertex $v \in V(C)$ with a non-pendant vertex $z \in V(B)$. Then, we subdivide the edge $uv$ $(4k+2)$-times and the edge $vz$ $(4k')$-times, where $k$ and $k'$ are two arbitrary non-negative integers (see Fig.~\ref{druzina_F4}). 
The vertices obtained by subdivision (if exist) are labeled by $y_1, y_2, \ldots, y_l$ and $w_1, w_2, \ldots, w_{l'}$ as in Fig.~\ref{druzina_F4}. In addition, a graph from $\mathcal{F}_4$ can also contain one edge from $\{v_1y_{l-1}, v_1w_2\}$.  

Finally, the family $\mathcal{F}_5$ contains all graphs, which can be obtained from the disjoint union of graphs $A$ and $B$, where
$A, B \in \{P_4, C_4\}$, by first joining a non-pendant vertex $u\in V(A)$ with a non-pendant vertex $v \in V(B)$, and then subdividing the edge $uv$ $(2k)$-times, where $k$ is an arbitrary non-negative integer (see Fig.~\ref{druzina_F5}).


\begin{figure}[htb!]
%

%
%
\begin{subfigure}[b]{0.6\textwidth}
       \centering
        \resizebox{\linewidth}{!}{
\begin{tikzpicture}
\def\vr{2pt}
\def\len{1}
\coordinate (b) at (1, 0);
\coordinate (c) at (2, 0);
\coordinate (d) at (1.5, 0.8);
\coordinate (a') at (7, 0);
\coordinate (b') at (8, 0);
\coordinate (d') at (7.5, 0.8);

\coordinate (x) at (4, 0);
\coordinate (y) at (4.5, 0);
\coordinate (m) at (3, 0);
\coordinate (n) at (6, 0);
\coordinate (z) at (5, 0);

\draw (b) -- (c) -- (d); 
\draw (d')--(a') -- (b'); 
\draw (c)--(3,0)--(3.8,0);
\draw (a')--(6,0)--(5.2,0);
\draw (b)--(d);
\draw (b')--(d');

\draw (b)[fill=white]circle(\vr);
\draw (c)[fill=white]circle(\vr);
\draw (d)[fill=white]circle(\vr);
\draw (a')[fill=white]circle(\vr);
\draw (b')[fill=white]circle(\vr);
\draw (d')[fill=white]circle(\vr);
\draw (m)[fill=white]circle(\vr);
\draw (n)[fill=white]circle(\vr);

\draw (x)[fill=black]circle(0.05);
\draw (y)[fill=black]circle(0.05);
\draw (z)[fill=black]circle(0.05);


\draw(d)node[left]{\small{$u_2$}};
\draw(b)node[below]{\small{$u_1$}};
\draw(c)node[below]{\small{$u$}};

\draw(a')node[below]{\small{$v$}};
\draw(b')node[below]{\small{$v_1$}};
\draw(d')node[right]{\small{$v_2$}};

\draw(m)node[below]{\small{$x_1$}};
\draw(n)node[below]{\small{$x_{l}$}};
\draw(4.5, -0.5)node[below]{\small{$l \in \{4k, 4k+1, 4k+2;~ k \geq 0\}$}};

\end{tikzpicture}}
\caption{$\mathcal{F}_1$}
\label{Druzina_F1}
\end{subfigure}
\vspace{0.3cm}

%
%
\begin{subfigure}[b]{0.6\textwidth}
       \centering
        \resizebox{\linewidth}{!}{
\begin{tikzpicture}
\def\vr{2pt}
\def\len{1}
\coordinate (b) at (1, 0);
\coordinate (c) at (2, 0);
\coordinate (d) at (1.5, 0.8);
\coordinate (a') at (7, 0);
\coordinate (b') at (8, 0);
\coordinate (d') at (7.5, 0.8);

\coordinate (x) at (4, 0);
\coordinate (y) at (4.5, 0);
\coordinate (m) at (3, 0);
\coordinate (n) at (6, 0);
\coordinate (z) at (5, 0);

\draw (b) -- (c) -- (d); 
\draw (d')--(a') -- (b'); 
\draw (c)--(3,0)--(3.8,0);
\draw (a')--(6,0)--(5.2,0);
\draw (b)--(d);

\draw (b)[fill=white]circle(\vr);
\draw (c)[fill=white]circle(\vr);
\draw (d)[fill=white]circle(\vr);
\draw (a')[fill=white]circle(\vr);
\draw (b')[fill=white]circle(\vr);
\draw (d')[fill=white]circle(\vr);
\draw (m)[fill=white]circle(\vr);
\draw (n)[fill=white]circle(\vr);

\draw (x)[fill=black]circle(0.05);
\draw (y)[fill=black]circle(0.05);
\draw (z)[fill=black]circle(0.05);


\draw(4.5, -0.5)node[below]{\small{$l \in \{0, 4k+2; k \geq 0\}$}};

\draw(d)node[left]{\small{$u_2$}};
\draw(b)node[below]{\small{$u_1$}};
\draw(c)node[below]{\small{$u$}};

\draw(a')node[below]{\small{$v$}};
\draw(b')node[below]{\small{$v_1$}};
\draw(d')node[right]{\small{$v_2$}};

\draw(m)node[below]{\small{$x_1$}};
\draw(n)node[below]{\small{$x_{l}$}};

\end{tikzpicture}}
\caption{$\mathcal{F}_2$}
\label{Druzina_F2}
\end{subfigure}
\hspace{0.25cm}
\vspace{0.3cm}

%
%

\begin{subfigure}[b]{0.57\textwidth}
       \centering
        \resizebox{\linewidth}{!}{
\begin{tikzpicture}
\def\vr{2pt}
\def\len{1}
\coordinate (a) at (1, 1);
\coordinate (b) at (1, 0);
\coordinate (c) at (2, 0);
\coordinate (d) at (2, 1);
\coordinate (a') at (7, 0);
\coordinate (b') at (8, 0);
\coordinate (d') at (7.5, 0.8);

\coordinate (x) at (4, 0);
\coordinate (y) at (4.5, 0);
\coordinate (m) at (3, 0);
\coordinate (n) at (6, 0);
\coordinate (z) at (5, 0);

\draw (a)--(b) -- (c) -- (d); 
\draw [dashed] (a)--(d);
\draw (d')--(a') -- (b'); 
\draw (c)--(3,0)--(3.8,0);
\draw (a')--(6,0)--(5.2,0);
\draw (b')--(d');

\draw (a)[fill=white]circle(\vr);
\draw (b)[fill=white]circle(\vr);
\draw (c)[fill=white]circle(\vr);
\draw (d)[fill=white]circle(\vr);
\draw (a')[fill=white]circle(\vr);
\draw (b')[fill=white]circle(\vr);
\draw (d')[fill=white]circle(\vr);
\draw (m)[fill=white]circle(\vr);
\draw (n)[fill=white]circle(\vr);

\draw (x)[fill=black]circle(0.05);
\draw (y)[fill=black]circle(0.05);
\draw (z)[fill=black]circle(0.05);


\draw(4.5, -0.5)node[below]{$l \in \{4k;~ k \geq 1\}$};
\draw(a)node[left]{$u_2$};
\draw(b)node[left]{$u_1$};
\draw(c)node[below]{$u$};
\draw(d)node[right]{$u_3$};

\draw(a')node[below]{$v$};
\draw(b')node[below]{$v_1$};
\draw(d')node[right]{$v_2$};

\draw(m)node[below]{$x_1$};
\draw(n)node[below]{$x_{l}$};

\end{tikzpicture}}
\caption{$\mathcal{F}_3$}
\label{Druzina_F3}
\end{subfigure}
\vspace{0.3cm}


\begin{subfigure}[b]{1\textwidth}
       \centering
        \resizebox{\linewidth}{!}{
\begin{tikzpicture}
\def\vr{2pt}
\def\len{1}

\coordinate (a) at (0, 0);
\coordinate (b) at (1, 0);
\coordinate (c) at (1, 1);
\coordinate (d) at (6, 0);
\coordinate (e) at (7, 1);
\coordinate (f) at (13, 0);
\coordinate (g) at (14, 0);
\coordinate (h) at (13, 1);
\coordinate (m) at (2, 0);
\coordinate (n) at (5, 0);
\coordinate (w) at (7, 0);
\coordinate (w') at (12, 0);
\coordinate (s) at (8, 0);

\coordinate (x) at (3, 0);
\coordinate (y) at (3.5, 0);
\coordinate (z) at (4, 0);
\coordinate (x') at (10, 0);
\coordinate (y') at (10.5, 0);
\coordinate (z') at (11, 0);

\coordinate (t) at (9, 0);

\draw (a) -- (b) -- (c); 
\draw (w)--(e); 
\draw (g) -- (f) -- (h); 

\draw (b)--(2,0)--(2.8,0);
\draw (w)--(d)--(5,0)--(4.2,0);
\draw (f)--(11.2,0);
\draw (s)--(9.8,0);

\draw(w)--(s);

\draw[dashed](t)--(e)--(5,0);

\draw (a)[fill=white]circle(\vr);
\draw (b)[fill=white]circle(\vr);
\draw (c)[fill=white]circle(\vr);
\draw (d)[fill=white]circle(\vr);
\draw (e)[fill=white]circle(\vr);
\draw (f)[fill=white]circle(\vr);
\draw (g)[fill=white]circle(\vr);
\draw (h)[fill=white]circle(\vr);
\draw (m)[fill=white]circle(\vr);
\draw (n)[fill=white]circle(\vr);
\draw (w)[fill=white]circle(\vr);
\draw (w')[fill=white]circle(\vr);
\draw (s)[fill=white]circle(\vr);
\draw (t)[fill=white]circle(\vr);

\draw (x)[fill=black]circle(0.05);
\draw (y)[fill=black]circle(0.05);
\draw (z)[fill=black]circle(0.05);
\draw (x')[fill=black]circle(0.05);
\draw (y')[fill=black]circle(0.05);
\draw (z')[fill=black]circle(0.05);


\draw(3.5, -0.5)node[below]{$l \in \{4k+2;~ k \geq 0\}$ };
\draw(10.5, -0.5)node[below]{$l' \in \{4k';~ k' \geq 0\}$};

\draw(a)node[below]{$u_1$};
\draw(b)node[below]{$u$};
\draw(c)node[left]{$u_2$};

\draw(d)node[below]{$y_l$};
\draw(e)node[left]{$v_1$};

\draw(f)node[below]{$z$};
\draw(g)node[below]{$z_1$};
\draw(h)node[right]{$z_2$};

\draw(m)node[below]{$y_1$};
\draw(5,0)node[below]{$y_{l-1}$}; 
\draw(w)node[below]{$v$};
\draw(12,0)node[below]{$w_{l'}$}; 
\draw(s)node[below]{$w_{1}$}; 
\draw(t)node[below]{$w_{2}$}; 

\end{tikzpicture}}
\caption{$\mathcal{F}_4$}
\label{druzina_F4}
\end{subfigure}
\vspace{0.3cm}

%
%
%
\begin{subfigure}[b]{0.65\textwidth}
        \resizebox{\linewidth}{!}{
\begin{tikzpicture}
\def\vr{2pt}
\def\len{1}

\coordinate (a) at (0, 0);
\coordinate (b) at (1, 0);
\coordinate (c) at (2, 0);
\coordinate (d) at (2, 1);
\coordinate (a') at (7, 0);
\coordinate (b') at (8, 0);
\coordinate (c') at (9, 0);
\coordinate (d') at (7, 1);

\coordinate (x) at (4, 0);
\coordinate (y) at (4.5, 0);
\coordinate (m) at (3, 0);
\coordinate (n) at (6, 0);
\coordinate (z) at (5, 0);

\draw (a) -- (b) -- (c) -- (d); 
\draw (d')--(a') -- (b') -- (c'); 
\draw (c)--(3,0)--(3.8,0);
\draw (a')--(6,0)--(5.2,0);
\draw [dashed] (a) -- (d);
\draw [dashed] (c') -- (d');

\draw (a)[fill=white]circle(\vr);
\draw (b)[fill=white]circle(\vr);
\draw (c)[fill=white]circle(\vr);
\draw (d)[fill=white]circle(\vr);
\draw (a')[fill=white]circle(\vr);
\draw (b')[fill=white]circle(\vr);
\draw (c')[fill=white]circle(\vr);
\draw (d')[fill=white]circle(\vr);
\draw (m)[fill=white]circle(\vr);
\draw (n)[fill=white]circle(\vr);

\draw (x)[fill=black]circle(0.05);
\draw (y)[fill=black]circle(0.05);
\draw (z)[fill=black]circle(0.05);


\draw(4.5, -0.5)node[below]{$l \in \{2k; ~k \geq 0\}$};

\draw(a)node[below]{$u_2$};
\draw(b)node[below]{$u_1$};
\draw(c)node[below]{$u$};
\draw(d)node[left]{$u_3$};

\draw(a')node[below]{$v$};
\draw(b')node[below]{$v_1$};
\draw(c')node[below]{$v_2$};
\draw(d')node[right]{$v_3$};

\draw(m)node[below]{$x_1$};
\draw(n)node[below]{$x_{l}$};

\end{tikzpicture}}
\caption{$\mathcal{F}_5$}
\label{druzina_F5}
\end{subfigure}
%
\hspace{0.25cm}
\caption{Families of $4$-$\pch$-vertex-critical graphs}
\label{fig:vse_druzine_kriticne}
\end{figure}


\begin{theorem}
Let $G \in \mathcal{F}_1 \cup \mathcal{F}_2 \cup \mathcal{F}_3 \cup \mathcal{F}_4 \cup \mathcal{F}_5$. Then, $G$ is a $4$-$\pch$-vertex-critical graph. 
\label{izrek:druzine_so_4kriticne}
\end{theorem}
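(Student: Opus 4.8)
The plan is to handle each of the five families separately but along a common scheme: for every $G$ in a given family, first show $\chi_\rho(G)\ge 4$ by exhibiting a small obstruction, then exhibit an explicit $4$-packing coloring to conclude $\chi_\rho(G)=4$, and finally show $\chi_\rho(G-x)\le 3$ for every vertex $x$ by producing a concrete $3$-packing coloring of $G-x$ (often by invoking Lemma~\ref{lema:trikotnik+pot=3barve} or Proposition~\ref{goddard-pch=3} once $G-x$ is seen to decompose into paths, copies of $X_n$ or $Y_n$, and isolated vertices).

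For the lower bound $\chi_\rho(G)\ge 4$ the obstruction differs by family. In $\mathcal{F}_1,\mathcal{F}_2,\mathcal{F}_3$ the graph contains a $K_3$, and in fact it contains (as a subgraph) one of the already-known $4$-$\chi_\rho$-vertex-critical graphs or a configuration forcing a fourth color: e.g. in $\mathcal{F}_1$ a triangle with a subdivided path attached on one vertex plus a second triangle, where a $3$-coloring would have to reuse color $1$ along the path at distance $\le 1$ from both triangle tips; I would argue directly that the three triangle vertices use $\{1,2,3\}$ with color $1$ and color $2$ at distance $1$ from each other, and then trace the forced pattern $1,2,1,3,\dots$ along the subdivision path and observe that when it reaches the far triangle the parity of the subdivision length ($4k,4k+1,4k+2$) produces a conflict with the three colors required there. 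For $\mathcal{F}_4$ and $\mathcal{F}_5$, which are triangle-free, I would instead use Lemma~\ref{lemma:vodoravna_povezava}/Lemma~\ref{lemma:cikel+trikotnik=neustrezen_cikel} or a direct case analysis: a $P_3$ (resp.\ $C_4$) forces two of its three (resp.\ two nonadjacent) vertices to carry color $1$ only if they are at distance $\ge 2$, and with two such gadgets joined by a path of the prescribed length the forced colorings from the two ends collide. The upper bound $\chi_\rho(G)=4$ is routine: in each family one extends the pattern $1,2,1,3$ along the path and places the single color $4$ (or a fourth color on one vertex of one gadget) to absorb the remaining conflict; I would just display the coloring and check the three distance conditions, using that colors $1$ and $3$ are always safe on a long path and color $2$ needs gap $\ge 3$.

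The vertex-criticality step is the bulk of the work. For $x$ on the subdivision path, $G-x$ splits into two components each of which is an $X_n$, a $Y_n$, a path, or a $P_3$-with-pendant-path graph, all of which have $\chi_\rho\le 3$ by Lemma~\ref{lema:trikotnik+pot=3barve} and the known values for paths; so $\chi_\rho(G-x)\le 3$. For $x$ a vertex of one of the end gadgets ($K_3$, $P_3$, $P_4$, $C_4$), deleting $x$ either destroys the triangle (leaving a tree or a union of a $C_4$/$P_k$ with a pendant path, again covered by Lemma~\ref{lema:trikotnik+pot=3barve} or Proposition~\ref{goddard-pch=3}) or, when $x$ is a degree-$2$ triangle vertex, leaves a graph of type $X_n$ or $Y_n$; in the $\mathcal{F}_4$ case with the optional extra edge $v_1y_{l-1}$ or $v_1w_2$ one checks the few resulting graphs by hand. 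In every subcase I would exhibit the explicit $3$-coloring (the $1,2,1,3$ pattern on the surviving path, $\{1,2,3\}$ on a surviving $C_4$ or triangle-free remnant), so no appeal beyond the already-proved lemmas is needed.

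The main obstacle I anticipate is the bookkeeping in the lower-bound arguments, specifically verifying that for \emph{every} admissible subdivision length in each family no $3$-packing coloring exists — this requires a careful parity/propagation argument showing that the forced color-$1$ positions and color-$2$ positions emanating from the two end gadgets are incompatible, and the optional extra edges in $\mathcal{F}_4$ (and the $A,B\in\{P_4,C_4\}$ choices in $\mathcal{F}_3,\mathcal{F}_5$) multiply the cases one must treat. I would organize this by proving a single lemma of the form ``in any $3$-packing coloring of a $K_3$ (resp.\ $P_3$, resp.\ $C_4$) with a path attached at a vertex $u$, the color of the first path vertex and the parity pattern thereafter are determined,'' and then reuse it at both ends; the final contradiction is then a short parity check in each of the finitely many combinations. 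The remaining steps are genuinely routine coloring verifications.
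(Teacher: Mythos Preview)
Your overall scheme---lower bound, explicit $4$-coloring, then $3$-color every vertex-deletion---matches the paper's, and your treatment of the upper bound and of vertex-criticality (decompose $G-x$ into pieces of type $X_n$, $Y_n$, paths, etc., and apply Lemma~\ref{lema:trikotnik+pot=3barve}) is essentially what the paper does.

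The substantive divergence is in the lower bound $\chi_\rho(G)\ge 4$. You propose a direct color-propagation argument: fix a hypothetical $3$-packing coloring, argue that the triangle (or $P_3$/$C_4$) vertex of degree~$3$ is forced to carry color~$3$ (resp.\ a color in $\{2,3\}$), propagate the pattern $1,2,1,3,\ldots$ along the subdivision path, and obtain a parity clash at the far gadget for each admissible length. This is correct and elementary, but---as you yourself anticipate---it multiplies into many subcases (small $l$, the two $P_4$/$C_4$ choices in $\mathcal{F}_3$ and $\mathcal{F}_5$, the optional edges in $\mathcal{F}_4$). The paper sidesteps all of this by invoking Proposition~\ref{goddard-pch=3}: assuming $\chi_\rho(G)=3$, it places the vertices of $G$ into the structural classes $V_0,\ldots,V_7$ and derives a contradiction in two or three lines per family (e.g.\ in $\mathcal{F}_1$ both triangle apices are forced into $V_3$, yet their distance is not a multiple of~$4$; in $\mathcal{F}_5$ the odd distance between $u$ and $v$ forces one of them into $V_6$, which is impossible since it has two non-leaf, non-triangle neighbors). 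This structural route is shorter and uniform across the families, whereas yours is self-contained and avoids relying on the somewhat heavy characterization of $\mathcal{G}_3$. One small correction: Lemmas~\ref{lemma:vodoravna_povezava} and~\ref{lemma:cikel+trikotnik=neustrezen_cikel} are about cycles and BFS layers and do not help with the $\mathcal{F}_4$/$\mathcal{F}_5$ lower bound; you will need the direct case analysis you mention as the alternative.
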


\begin{proof}
Let $G \in \mathcal{F}_1 \cup \mathcal{F}_2 \cup \mathcal{F}_3 \cup \mathcal{F}_4 \cup \mathcal{F}_5$ be an arbitrary graph. We claim that $G$ is $4$-$\pch$-vertex-critical. 

First, let $G \in \mathcal{F}_1$ with the vertices labeled as in Fig.~\ref{Druzina_F1}. 
Suppose that $\pch(G)=3$ (clearly, $\pch(G) \geq 3$, since $G$ contains a subgraph isomorphic to $K_3$). 
Then, $G \in \mathcal{G}_3$, which implies that $V(G)$ can be partitioned into the sets $V_0, V_1, \ldots, V_7$, defined in Section 2. Since $\deg(u)=\deg(v)=3$,  $u$ and $v$ belong to a subgraph of $G$ isomorphic to $K_3$ and have neighbors, which are neither leaves nor belong to some triangle, we infer that $u, v \in V_3$. 
The fact that $u \in V_3$ implies that $x_1 \in V_2, x_2 \in V_1, x_3 \in V_2, x_4 \in V_3, \ldots$ Recall that any two vertices from $V_3$ are at distance $4a, a\geq 1$. Therefore, if $l \leq 2$, we have a contradiction, since $d(u,v) \leq 3$ and $u,v \in V_3$.  Thus, $l \geq 3$, but then there exists $ c \in \{x_{l-2}, x_{l-1}, x_l\}$, which belongs to $V_3$. Since $d(c,v) \leq 3$ and $c,v \in V_3$, we again have a contradiction. Hence, $G \notin \mathcal{G}_3$. 
In order to prove that $\pch(G) \leq 4$, we form a $4$-packing coloring $c_1$ of $G$. Let $c_1(u)=3$, $c_1(u_1)=c_1(v_1)=1$, $c_1(u_2)=2$, $c_1(v)=4$. Further, color the vertices $x_1, x_2, \ldots, x_l$ one after another using the following pattern of colors: $1,2,1,3$. Finally, let $c_1(v_2)=2$ if $l=4k$ or $l=4k+1$ for any $k \geq 0$, and otherwise, $c_1(v_2)=3$. Since $c_1$ is a $4$-packing coloring of $G$, $\pch(G)=4$.
Now, let $a \in V(G)$ be an arbitrary vertex. Note that $G-a$ is either $X_n$, $n \geq 2$, or the disjoint union of two graphs from $\{K_2, K_3\} \cup \{X_m; m \geq 1\}$. Using Lemma \ref{lema:trikotnik+pot=3barve} and the facts that $\pch(K_2)=2$, $\pch(K_3)=3$, we infer that $\pch(G-a) \leq 3$. Thus, $G$ is $4$-$\pch$-vertex-critical.

Next, suppose that $G \in \mathcal{F}_2$. Let the vertices of $G$ be denoted as is shown in Fig.~\ref{Druzina_F2}. We claim that $\pch(G)=4$. Note that $G$ is isomorphic to a subgraph of some graph $H$ from $\mathcal{F}_1$. We have already proved that $\pch(H)=4$, which implies that $\pch(G) \leq 4$. Therefore, we need to prove that $\pch(G) \geq 4$. Suppose to the contrary that $\pch(G)=3$ (clearly, $\pch(G) \geq 3$, since $G$ contains a subgraph isomorphic to $K_3$). Then, $G \in \mathcal{G}_3$, which means that $V(G)$ can be partitioned into the sets $V_0, V_1, \ldots, V_7$, defined above. Since $\deg(u)=3$,  $u$ belongs to a subgraph of $G$ isomorphic to $K_3$ and has neighbors, which are neither leaves nor belong to some triangle, we derive that $u \in V_3$. If $l \neq 0$, then  $x_1 \in V_2, x_2 \in V_1, x_3 \in V_2, x_4 \in V_3, \ldots, x_l \in V_1$ and consequently, $v \in V_0 \cup V_2$, a contradiction since $v$ has degree $3$. Otherwise, $u_1, u_2 \in V_5$, which implies that $v \in V_2$, again a contradiction since $\deg(v)=3$. Therefore, in both cases, $G \notin \mathcal{G}_3$, which implies that $\pch(G)=4$. 
Now, we need to prove that $\pch(G-a) \leq 3$ holds for any $a \in V(G)$. Let $a \in V(G)$ be an arbitrary vertex. We observe that $G-a$ is a subgraph of a graph $H-b$, where $H \in \mathcal{F}_1$ and $b \in V(H)$. We have already proved $\pch(H-b) \leq 3$, which implies that $\pch(G-a) \leq 3$. Thus, $G$ is a $4$-$\pch$-vertex-critical graph.

Now, let $G \in \mathcal{F}_3$ and let its vertices be denoted as is shown in Fig.~\ref{Druzina_F3}. We claim that $\pch(G)=4$. It is clear that $\pch(G) \geq 3$, since $G$ contains a subgraph isomorphic to $K_3$. Suppose that $\pch(G) = 3$, which implies that $G \in \mathcal{G}_3$ and hence, $V(G)$ can be partitioned into the sets $V_0, V_1, \ldots, V_7$, defined above. 
Since $\deg(v)=3$, $v$ belongs to a subgraph of $G$ isomorphic to $K_3$ and has a neighbor, which is neither leaf nor belongs to some triangle, we conclude that $v \in V_3$. This implies that $x_l \in V_2, x_{l-1} \in V_1, x_{l-2} \in V_2, x_{l-3} \in V_3, \ldots, x_1 \in V_3$. Then, $u \in V_2 \cup V_4 \cup V_5 \cup V_6$, more precisely $u \in V_6$ since $\deg(u)=3$ and $u$ does not belong to any triangle. Further, since $u_1, u_3$ do not belong to any subgraph of $G$ isomorphic to $K_3$, they belong to $V_7$, a contradiction, since $u_1$ is not a leaf (recall that $V_7$ contains only the leaves). Thus, $\pch(G) \geq 4$. 
Next, note that $G-u_3$ is isomorphic to $X_n$, $n \geq 1$. Lemma \ref{lema:trikotnik+pot=3barve} implies that there exists a $3$-packing coloring $c_3'$ of $G-u_3$. Let $c_3(a)=c_3'(a)$ for any $a \in V(G) \setminus \{u_3\}$ and let $c_3(u_3)=4$. Clearly, $c_3$ is a $4$-packing coloring of $G$, thus $\pch(G)=4$. 
Now, we prove that $\pch(G-a) \leq 3$ holds for any $a \in V(G)$. If $a=u_3$, then $G-a$ is isomorphic to $X_n$, $n \geq 1$. Next, if $a \in \{v_1,v_2\}$, then $G-a$ is isomorphic to (a subgraph of) $Y_n, n \geq 1$. In the case when $a \in V(G) \setminus \{u_2, u_3, v_1, v_2\}$, $G-a$ is isomorphic to the disjoint union of (a subgraph of) $X_n$, $n \geq 1$, and (a subgraph of) $Y_n$, $n \geq 1$. In each case, Lemma \ref{lema:c4+pot=3barve} implies that $\pch(G-a) \leq 3$. If $a=u_2$, then we form a $3$-packing coloring $c_3''$ of $G-u_2$ as follows.  Let $c_3''(u_1)=c_3''(u_3)=c_3''(v_1)=1$, $c_3''(u)=c_3''(v_2)=2$, $c_3''(v)=3$ and color the vertices $x_1, x_2, \ldots, x_l$ one after another using the following pattern of colors: $3,1,2,1$. Since $c_3''$ is a $3$-packing coloring of $G-u_2$, $\pch(G-u_2) \leq 3$. Thus, $G$ is $4$-$\pch$-vertex-critical.

Further, let $G\in \mathcal{F}_4$ with the vertices labeled as in Fig.~\ref{druzina_F4}. 
First, we prove that $\pch(G)=4$. Since $G$ contains a subgraph isomorphic to $P_4$, $\pch(G) \geq 3$. Suppose that $\pch(G)=3$, which means that $G \in \mathcal{G}_3$. Then, we can partition $V(G)$ into the sets $V_0, V_1, \ldots, V_7$, described above. Since $\deg(v)=3$ and $v$ has two neighbors, which are neither leaves nor belong to some triangle, $v \in V_1 \cup V_3$. Then, $y_l \in V_2, y_{l-1} \in V_1 \cup V_3, y_{l-2} \in V_2, y_{l-3} \in V_1 \cup V_3, \ldots, y_1 \in V_1 \cup V_3$. Further, since $\deg(u)=3$, $u \in V_6$ and it follows that $v \in V_1$. Hence, $w_1 \in V_2$, $w_2 \in V_3$, $w_3 \in V_2$, $w_4 \in V_1, \ldots, w_{l'} \in V_1$. This means that $z \in V_0 \cup V_2$, a contradiction since $\deg(z)=3$. Thus, $G \notin \mathcal{G}_3$, which implies that $\pch(G) \geq 4$.
Now, let $c_4: V(G) \longrightarrow \{1,2,3,4\}$ and let $c_4(u_1)=c_4(u_2)=c_4(v_1)=c_4(z_1)=c_4(z_2)=1$, $c_4(u)=c_4(v)=2$, and $c_4(z)=4$. Further, color the vertices $y_1, y_2, \ldots, y_l$ one after another using the pattern of colors $3,1,2,1$, and the vertices $w_1, w_2, \ldots, w_{l'}$ one after another using the pattern of colors $1,3,1,2$. Clearly, $c_4$ is a $4$-packing coloring of $G$. Thus, $\pch(G) = 4$. 
Next, we claim that $\pch(G-a) \leq 3$ holds for any $a \in V(G)$. 
Denote by $G'$ a subgraph of $G$ induced by $V(G) \setminus \{z_2\}$. Let $c_4'(u_1)=c_4'(u_2)=c_4'(v_1)=c_4'(z)=1$, $c_4'(u)=2$, $c_4'(z_1)=3$. Then, color the vertices $y_1, y_2, \ldots, y_l, v, w_1, w_2, \ldots, w_{l'}$ one after another using the following pattern of colors: $3,1,2,1$. Clearly, $c_4'$ is a $3$-packing coloring of $G'$, thus $\pch(G-z_2) \leq 3$. Analogously we prove that $\pch(G-z_1) \leq 3$.
Let $G''$ be a subgraph of $G$ induced by $V(G) \setminus \{u_2\}$. In order to prove that $\pch(G'') \leq 3$, we form a $3$-packing coloring $c_4''$ of $G''$. 
Let $c_4''(u)=c_4''(v_1)=c_4''(z_1)=c_4''(z_2)=1$, $c_4''(z)=2$, $c_4''(u_1)=3$. Then, color the vertices $y_1, y_2, \ldots, y_l, v, w_1, w_2, \ldots, w_{l'}$ one after another using the following pattern of colors: $2,1,3,1$. Clearly, $c_4''$ is a $3$-packing coloring of $G''$, thus $\pch(G-u_2) \leq 3$. Analogously we prove that $\pch(G-u_1) \leq 3$.
If $a \in \{u, y_1,y_2, \ldots, y_{l-1}, w_2, \ldots, w_{l'}, z, v\}$, then $G-a$ is the disjoint union of a subgraph of $G'$ and a subgraph of $G''$. Thus, the above considerations imply that $\pch(G-a) \leq 3$. 
In the case when $a=v_1$, color the vertices of $G-v_1$ as follows. Let $u_1, u_2, z_1, z_2$ receive color $1$, $u$ and $z$ color $3$, and let the vertices $y_1, y_2, \ldots, y_l, v, w_1, w_2, \ldots, w_{l'}$ be colored one after another using the pattern $1,2,1,3$. The described coloring is a $3$-packing coloring of $G-v_1$, thus $\pch(G-v_1) \leq 3$. 
Finally, if $a \in \{y_l, w_1\}$, then $G-a$ is isomorphic to $G-v_1$, or to the disjoint union of a subgraph of $G'$ and a subgraph of $G''$. Hence, $\pch(G-a) \leq 3$ and we conclude that $G$ is $4$-$\pch$-vertex-critical.

It remains to consider the case when $G \in \mathcal{F}_5$. Denote the vertices of $G$ as is shown in Fig.~\ref{druzina_F5}. First, we prove that $\chi_\rho(G)=4$.
It is clear that $\pch(G) \geq 3$ since $G$ contains a subgraph isomorphic to $P_4$. Suppose that $G \in \mathcal{G}_3$. Then, we make a partition of $V(G)$ into the sets $V_0, V_1, \ldots, V_7$, which are described above. Since $\deg(u), \deg(v) \geq 3$ and $d(u,v)=2k+1$, it follows that $u$ or $v$ belongs to $V_6$. But this is a contradiction, since $u$ (respectively, $v$) has at least two neighbors, which are neither leaves nor belong to a triangle. Thus, $\pch(G) \geq 4$. 
Let $c_5: V(G) \longrightarrow \{1,2,3,4\}$ and let $c_5(u_1)=c_5(u_3)=c_5(v_1)=c_5(v_3)=1$, $c_5(u_2)=c_5(v_2)=2$, $c_5(u)=3$ and $c_5(v)=4$. Further, color the vertices $x_1, x_2, \ldots, x_l$ one after another using the pattern of colors: $1,2,1,3$. Clearly, $c_5$ is a $4$-packing coloring of $G$ and thus, $\pch(G) = 4$. 
Next, we prove that $\pch(G-a) \leq 3$ holds for any $a \in V(G)$. If $a \in V(G) \setminus \{u_2,v_2\}$, then $G-a$ is isomorphic to (a subgraph of) $Y_n$, $n \geq 1$,  or it is the disjoint union of graphs isomorphic to (subgraphs of) $Y_n$, $n \geq 1$. In each case, Lemma \ref{lema:c4+pot=3barve} implies that $\pch(G-a) \leq 3$. 
If $a=v_2$ and $l=4k', k' \geq 0$, then we form a $3$-packing coloring $c_5'$ of $G-v_2$ as follows: $c_5'(a)=c_5(a)$ for any $a \in V(G) \setminus \{v,v_2\}$ and $c_5'(v)=2$. In the case when $a=v_2$ and $l=4k'+2, k' \geq 0$, let $c_5''(u_1)=c_5''(u_3)=c_5''(v_1)=c_5''(v_3)=1$, $c_5''(u_2)=3$, $c_5''(u)=c_5''(v)=2$, and color the vertices $x_1, x_2, \ldots, x_l$ one after another using the pattern of colors: $1,3,1,2$. These colorings imply that  $\pch(G-v_2) \leq 3$. Analogously one can prove that  $\pch(G-u_2) \leq 3$. Hence, $G$ is a $4$-$\pch$-vertex-critical graph. This completes the proof. 

\qed
\end{proof}


The following two lemmas will help us to prove a complete characterization of $4$-$\pch$-vertex-critical graphs. 

\begin{lemma}
Let $H$ be obtained by attaching a single leaf to two vertices of $C_n$, $n \geq 4$, which are at distance $2k+1$, $k \geq 0$. Then, $\pch(H) \geq 4$. 
\label{lemma:cikel+2lista_na_lihi_razdalji_pch_vsaj_4}
\label{lema_sosednji_vozlisci_stopnje_3_cikel_vsaj4barve}
\end{lemma}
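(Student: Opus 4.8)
The plan is to show that $H$ cannot be $3$-packing colorable by arguing directly about how a hypothetical $3$-packing coloring $c$ would have to behave on the cycle $C_n$ together with the attached leaf $w$. Write $C_n = x_1 x_2 \cdots x_n x_1$ and suppose the leaf $w$ is attached to $x_1$ and to $x_j$, where $d_{C_n}(x_1,x_j) = 2k+1$ is odd; note that attaching $w$ creates two cycles through $w$, one of odd length and one of even length, and the edge $x_1 w$, $x_j w$ are genuine edges of $H$. First I would record the elementary but crucial fact that in any $3$-packing coloring, color $2$ and color $3$ each form an independent set, color $3$ in fact forms a set of vertices pairwise at distance $\ge 4$, while color $1$ is the only color that may be used "densely" (on an independent set, but with no distance restriction beyond $1$). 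So on any cycle, between two consecutive $1$'s there are at most two non-$1$ vertices, and those must carry distinct colors from $\{2,3\}$.

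The key step is a parity/counting argument. Consider the closed walk around $C_n$. Since color $1$ is independent, the vertices colored $1$ split $C_n$ into arcs each containing one or two vertices of color in $\{2,3\}$; within a length-two arc the two colors are forced to be $\{2,3\}$. Now bring in the leaf $w$: $w$ is adjacent to both $x_1$ and $x_j$, so $c(w), c(x_1), c(x_j)$ are pairwise distinct in color value as far as color $1$ is concerned — more precisely $x_1$ and $x_j$ cannot both be $1$ (they have common neighbor $w$), and whichever of them is not $1$ constrains $w$. I would then exploit the odd cycle $x_1 w x_j x_{j-1}\cdots x_1$ (or whichever odd cycle arises): an odd cycle is not $2$-colorable, so it cannot be colored using only colors $1$ is forced... actually the cleaner route is: among $\{w, x_1,\dots,x_j\}$ forming an odd cycle $C_{2k+2}$ — wait, $d(x_1,x_j)=2k+1$ so one of the two $x_1$–$x_j$ arcs together with $w$ is an even cycle and the other together with $w$ is an odd cycle. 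Use the odd cycle: call it $Z$, with $|V(Z)| = m$ odd. On $Z$, at most $\lfloor m/2\rfloor$ vertices get color $1$ (independence), so at least $\lceil m/2 \rceil$ get colors from $\{2,3\}$; but colors $2,3$ on $Z$ must themselves be "sparse" (color $2$ independent in $H$, color $3$ at pairwise distance $\ge 4$), and I would check that $\lceil m/2\rceil$ exceeds the maximum number of vertices of $Z$ that can carry colors $2$ and $3$ simultaneously, once we also account for the constraint imposed at $x_1, x_j, w$ and their neighbors outside $Z$. Concretely, color $3$ can appear at most once on a short odd cycle and not at all if $m \le 3$ forces conflicts through $w$; pushing this through yields the contradiction.

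An alternative, possibly smoother, execution: invoke Proposition~\ref{goddard-pch=3}. If $\pch(H)=3$ then $H\in\mathcal G_3$, so $V(H)$ partitions into $V_0,\dots,V_7$ with the stated structural constraints — in particular any two vertices of $V_1$ are at distance $\equiv 0 \pmod 4$, any two of $V_3$ are at distance $\equiv 0\pmod 4$, subdivision vertices of $V_2$ lie on $V_1$–$V_3$ paths of length $4$, and the triangle/$T$-add vertices $V_5,V_6,V_7$ have a rigid local shape. Since $C_n$ with two extra leaf-edges contains an odd cycle of length $2k+2$... no: $H$ itself, restricted appropriately, contains the odd cycle $Z$ of length $m$. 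If $m \not\equiv 0 \pmod 4$ we could already be near a contradiction via the cycle facts recalled in Section~2 ($\pch(C_m)=4$ when $m\ge5$, $m\not\equiv 0\pmod 4$); the only escape is $m=3$, i.e. $k=0$ and the two leaf-attachment points are adjacent, giving the "triangle attached to $C_n$" picture, which I would handle by Lemma~\ref{lemma:cikel+trikotnik=neustrezen_cikel}: that configuration contains $C_{n+1}$ with $n+1\ge 5$ and $n+1\not\equiv0\pmod4$ precisely when $n\equiv0\pmod4$, and otherwise $C_n$ itself already has $\pch=4$; either way $\pch(H)\ge 4$.

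The main obstacle I anticipate is the boundary case $k=0$ (leaf attached to two adjacent vertices of $C_n$) combined with $n\equiv 0\pmod 4$, since then neither $C_n$ nor the naive odd cycle immediately certifies $\pch\ge 4$ — this is exactly where Lemma~\ref{lemma:cikel+trikotnik=neustrezen_cikel} must be used to produce the bad cycle $C_{n+1}$. For general $k\ge 1$ the even/odd cycle bookkeeping is routine once the "between two $1$'s there are $\le 2$ vertices, colored $\{2,3\}$" principle is in hand, but one must be careful that the leaf $w$ and its two neighbors $x_1,x_j$ genuinely force an extra constraint rather than being absorbable — I would make sure to use that $w$ has degree $2$ and lies in no triangle (when $k\ge1$), so in the $\mathcal G_3$ language $w\in V_2$, forcing $x_1,x_j$ into $V_1\cup V_3$ at distance $2$ through $w$, which is incompatible with the distance-$\equiv0\pmod4$ requirement within $V_1$ and within $V_3$ and with the bipartite-subdivision structure linking $V_1$ to $V_3$.
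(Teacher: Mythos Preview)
Your proposal rests on a misreading of the construction. ``Attaching a single leaf to two vertices of $C_n$'' means attaching one pendant vertex to \emph{each} of two specified cycle vertices, so $H$ has $n+2$ vertices and the two new vertices are genuine leaves of degree~$1$. (Note that a vertex adjacent to two cycle vertices would have degree~$2$ and would not be a leaf; compare $H_5$ in Fig.~\ref{h5}, which is exactly the case $n=4$, $k=0$.) You instead treat $H$ as $C_n$ plus a single vertex $w$ adjacent to both $x_1$ and $x_j$, which creates new cycles through $w$. In the actual graph $H$ there is only one cycle, namely $C_n$ itself, so your odd-cycle argument, your invocation of Lemma~\ref{lemma:cikel+trikotnik=neustrezen_cikel}, and your claim that ``$w$ has degree $2$ and lies in no triangle'' all collapse: there is no vertex $w$ with these properties, and no odd cycle appears when $n\equiv 0\pmod 4$.

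For the correct graph the interesting case is $n\equiv 0\pmod 4$, since otherwise $C_n\in\mathcal C$ already gives $\pch(H)\ge 4$. The paper handles $n=4$ by observing $H\cong H_5$, and $n=4a$ with $a\ge 2$ by exhibiting a subgraph in $\mathcal F_5$: the two degree-$3$ vertices $x_1,x_j$ sit at odd distance $2k+1$ along the cycle, and together with their leaves and their cycle neighbours on the longer arc they form exactly the configuration in Fig.~\ref{druzina_F5} (take $u=x_1$, $v=x_j$, the path between them along one arc, and $u_1,u_3,v_1,v_3$ their remaining neighbours). Your $\mathcal G_3$-structure idea could be salvaged for the correct $H$, but the analysis is different: the two leaves force $x_1,x_j$ out of $V_2$, and one must then argue that the cycle distances are incompatible with the $V_1$/$V_3$ distance constraints; this essentially reproduces the $\mathcal F_5$ argument by hand.
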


\begin{proof}
Let $H$ be obtained by attaching a single leaf to two vertices of $C_n$, $n \geq 4$, which are at distance $2k+1$, $k \geq 0$. 
Clearly, if $n \neq 4a, a\geq 1$, then $H$ contains a subgraph $H'$ isomorphic to a graph from $\mathcal{C}$. Since $\pch(H')=4$, the hereditary property ot the packing chromatic number implies that $ \pch(H) \geq 4$. 
Therefore, we only need to consider the case when $n = 4a, a\geq 1$. If $a\geq 2$, then $H$ contains a subgraph isomorphic to a graph from $\mathcal{F}_5$. In this case, the hereditary property of the packing chromatic number and Theorem \ref{izrek:druzine_so_4kriticne} imply that $\pch(H) \geq 4$. If $a=1$, then $H$ is isomorphic to $H_5$ and by Theorem \ref{izrek_4kriticni_grafi}, $\pch(H)=4$. 
\qed
\end{proof}

\begin{lemma}
Let $\mathcal{X}=\mathcal{F}_1 \cup \mathcal{F}_2 \cup \mathcal{F}_3 \cup \mathcal{F}_4 \cup \mathcal{F}_5 \cup \mathcal{C}_5 \cup \mathcal{C}_6 \cup \mathcal{C} \cup \{K_4, H_1, H_2, H_3, H_4, H_5, H_{6}, H_7, \\ H_8, H_9\}$ and let $H \in \mathcal{X}$. If a graph $G \notin \mathcal{X}$ contains a subgraph isomorphic to $H$, then $G$ is not $4$-$\pch$-vertex-critical. 
\label{lemma:podgrafi_4-kriticnost}
\end{lemma}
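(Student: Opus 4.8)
The plan is to argue by contradiction: assume $G\notin\mathcal{X}$ contains a subgraph isomorphic to some $H\in\mathcal{X}$, and yet $G$ is $4$-$\pch$-vertex-critical. By Theorems~\ref{izrek_4kriticni_grafi} and~\ref{izrek:druzine_so_4kriticne} and Theorem~\ref{thm:kriticni_CIKLI_rall_klavzar_izrek}, every member of $\mathcal{X}$ is $4$-$\pch$-vertex-critical, so $\pch(H)=4$; by the hereditary property $\pch(G)\geq 4$, and since we are assuming $G$ is $4$-$\pch$-vertex-critical, $\pch(G)=4$ and $\pch(G-v)\leq 3$ for every $v\in V(G)$.

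First I would dispose of the case $V(H)\subsetneq V(G)$: picking any $v\in V(G)\setminus V(H)$, the copy of $H$ survives as a subgraph of $G-v$, so $\pch(G-v)\geq\pch(H)=4$, contradicting vertex-criticality. Hence $V(H)=V(G)$; and since $G\neq H$ (because $H\in\mathcal{X}$ while $G\notin\mathcal{X}$), we have $E(H)\subsetneq E(G)$, i.e.\ $G$ arises from $H$ by adding at least one edge.

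Next I would split on the type of $H$. If $H\in\mathcal{C}_5\cup\mathcal{C}_6\cup\mathcal{C}$, then $G$ contains a cycle $C_n$ with $n\geq 5$, $n\not\equiv 0\pmod 4$ (namely $C_5$, $C_6$, or $H$ itself), so Theorem~\ref{thm:kriticni_CIKLI_rall_klavzar_izrek} applies to $G$; as $G\notin\mathcal{C}_5\cup\mathcal{C}_6\cup\mathcal{C}$, the theorem gives that $G$ is not $4$-$\pch$-vertex-critical, a contradiction. If $H=K_4$, no edge can be added inside $V(K_4)$, so this case does not arise. This leaves $H\in\{H_1,\ldots,H_9\}\cup\mathcal{F}_1\cup\cdots\cup\mathcal{F}_5$, which is the heart of the argument. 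Fixing an edge $e\in E(G)\setminus E(H)$ and considering $H+e\subseteq G$: if $H+e$ already contains a cycle $C_n$, $n\geq 5$, $n\not\equiv 0\pmod 4$, I am back in the previous case applied to $G$. Otherwise I would show, by a structural case analysis on $H$ and $e$, that one of the following occurs: (a) some vertex deletion $G-v$ still contains a subgraph isomorphic to a member of $\mathcal{X}$ — typically, when $e$ is a chord of a subdivided path of some $\mathcal{F}_i$ creating a $4k$-cycle, deleting an interior vertex of that cycle leaves a member of $\mathcal{F}_1$ with a shorter connecting path, or one of $H_1,\ldots,H_9$, or $K_4$ — whence $\pch(G-v)\geq 4$, a contradiction; or (b) $G\in\mathcal{X}$, which happens exactly when $e$ is one of the optional chords already permitted in the definitions of $\mathcal{F}_3,\mathcal{F}_4,\mathcal{F}_5$, or when adding $e$ to some $H_i$ yields another $H_j$ (for instance $H_7+e=H_8$), contradicting $G\notin\mathcal{X}$. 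The tools here are Lemma~\ref{lemma:cikel+trikotnik=neustrezen_cikel} (a triangle glued on an edge of a $4$-cycle forces a forbidden cycle), Lemma~\ref{lemma:vodoravna_povezava}, and Proposition~\ref{goddard-pch=3} with the $V_0,\ldots,V_7$ partition: whenever $H+e$ produces two adjacent vertices that would both have to lie in $V_3$ (triangle vertices of degree $\geq 3$ whose remaining neighbours are not all leaves or triangle vertices), the $V_3$-distance condition $d(v_3,v_3')=4k$ is violated, so $G-v$ for an appropriate leaf $v$ is not in $\mathcal{G}_3$, again contradicting $\pch(G-v)\leq 3$. Since $G$ has only finitely many edges beyond $H$, one may also replace $H$ by $H+e$ whenever $H+e\in\mathcal{X}$ and repeat; this terminates, and in every branch a contradiction is reached.

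The step I expect to be the main obstacle is this final structural case analysis for $H\in\{H_1,\ldots,H_9\}\cup\mathcal{F}_1\cup\cdots\cup\mathcal{F}_5$: one must enumerate, up to symmetry, every possible added edge. For the nine exceptional graphs this is a finite but lengthy check, and for the five infinite families the difficulty is to treat all lengths of the subdivided paths uniformly, since the residue modulo $4$ of the cycle created by a chord determines which reduction applies; in the $4k$ case one must carefully locate a nearby triangle or $C_4$ in order to invoke Lemma~\ref{lemma:cikel+trikotnik=neustrezen_cikel}, or pinpoint the right vertex whose deletion exposes a smaller $4$-$\pch$-vertex-critical subgraph from $\mathcal{X}$.
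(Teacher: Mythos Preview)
Your overall architecture matches the paper's proof exactly: dispose of the non-spanning case first, then handle $\mathcal{C}_5\cup\mathcal{C}_6\cup\mathcal{C}$ via Theorem~\ref{thm:kriticni_CIKLI_rall_klavzar_izrek}, observe $K_4$ is rigid, and for all remaining $H$ do an edge-by-edge case analysis showing that $H+e$ either contains a \emph{non-spanning} copy of some member of $\mathcal{X}$ (which feeds back into the first step) or lands in $\mathcal{X}$.

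There is one substantive mismatch in your toolkit. The paper does \emph{not} invoke Proposition~\ref{goddard-pch=3} or the $V_0,\ldots,V_7$ partition anywhere in this lemma; that machinery is reserved for the proof of Theorem~\ref{glavni_izrek}. Instead, the workhorse you are missing is Lemma~\ref{lemma:cikel+2lista_na_lihi_razdalji_pch_vsaj_4}: whenever adding an edge to a member of $\mathcal{F}_1,\ldots,\mathcal{F}_5$ creates a cycle of length $\geq 4$ that misses some vertex of $H$, the paper observes that two pendant paths hang off that cycle at vertices an odd distance apart, and Lemma~\ref{lemma:cikel+2lista_na_lihi_razdalji_pch_vsaj_4} immediately gives $\pch\geq 4$ for that non-spanning subgraph. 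This is much cleaner than re-analysing the $\mathcal{G}_3$ structure of $G-v$ for a carefully chosen leaf $v$, and it handles precisely the ``$4k$-cycle'' residue you flag as the obstacle. Your alternative (``delete an interior vertex of the $4k$-cycle to expose a shorter $\mathcal{F}_1$'') is not how the paper proceeds and would require extra care, since deleting a path vertex may destroy the triangle/$C_4$ at one end.

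In short: the plan is correct and the case split is the paper's, but replace your proposed $\mathcal{G}_3$-partition argument with Lemma~\ref{lemma:cikel+2lista_na_lihi_razdalji_pch_vsaj_4} and the argument goes through exactly as in the paper.
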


\begin{proof}
Let $H \in \mathcal{X}=\mathcal{F}_1 \cup \mathcal{F}_2 \cup \mathcal{F}_3 \cup \mathcal{F}_4 \cup \mathcal{F}_5 \cup \mathcal{C}_5 \cup \mathcal{C}_6 \cup \mathcal{C} \cup \{K_4, H_1, H_2, H_3, H_4, H_5, H_6, H_7,\\ H_8, H_9\}$ and let $G \notin \mathcal{X}$ be an arbitrary graph containing a subgraph isomorphic to $H$. Clearly, since $G \notin \mathcal{X}$, but $H \in \mathcal{X}$, $G \not \cong H$. 

First, suppose that $V(G) \neq V(H)$, which means that there exists a vertex $a \in V(G) \setminus V(H)$. Using the fact that $G-a$ contains a subgraph isomorphic to $H$, the hereditary property of the packing chromatic number   and Theorems \ref{thm:kriticni_CIKLI_rall_klavzar_izrek}, \ref{izrek_4kriticni_grafi}, \ref{izrek:druzine_so_4kriticne},  we derive that $\pch(G-a) \geq 4$. It follows that $G$ is not a $4$-$\pch$-vertex-critical graph.

Now, let $V(G)=V(H)$. 
If $H \in \mathcal{C}_5 \cup \mathcal{C}_6 \cup \mathcal{C}$, then Theorem \ref{thm:kriticni_CIKLI_rall_klavzar_izrek} implies that $G$ is not $4$-$\pch$-vertex-critical (recall that $G \notin \mathcal{C}_5 \cup \mathcal{C}_6 \cup \mathcal{C}$). 
Further, if $H \cong K_4$, then $G \cong H$, a contradiction to the fact that $G \not \cong H$.

Clearly, since $G \not \cong H$ and $V(G)=V(H)$, $E(H)\neq E(G)$. Then, there exists $e' \in E(G) \setminus E(H)$. 

Therefore, if $H \cong H_1$ (see Fig.~\ref{Trikotnik_drugi}), then at least one of the edges $be, ca, ce, cd$ belongs to $E(G)$. This implies that $G$ contains a subgraph isomorphic to a graph from $\mathcal{C}$ or it contains a subgraph isomorphic to $K_4$. The above considerations imply that $G$ is not a $4$-$\pch$-vertex-critical graph.
If $H \cong H_2$ (see Fig.~\ref{Trikotnik_tretji}), then $G$ contains a subgraph isomorphic to a graph from $\mathcal{C}$ and by the above reasoning, it is not $4$-$\pch$-vertex-critical.
In the case when $H \cong H_3$ or $H \cong H_6$ (see Fig.~\ref{Trikotnik_cetrti},~\ref{h6}), $G$ contains a subgraph isomorphic to a graph from $\mathcal{C}$ or a subgraph isomorphic to $H_1$. In both cases, the above considerations imply that $G$ is not $4$-$\pch$-vertex-critical. 
Now, let $H \cong H_4$ (see Fig.~\ref{h4}). In this case, $G$ contains a subgraph isomorphic to a graph from $\mathcal{C} \cup \{H_1, H_2\}$. Again, the above considerations imply that $G$ is not $4$-$\pch$-vertex-critical. 
If $H \cong H_5$ (see Fig.~\ref{h5}), then $G$ contains a subgraph isomorphic to a graph from $\mathcal{C} \cup \{H_1\}$ or $G \cong H_6$. In the first case, the above considerations imply that $G$ is not $4$-$\pch$-vertex-critical, and in the second case we have a contradiction to $G \notin \mathcal{X}$.
Next, let $H \cong H_7$ (see Fig.~\ref{h7}). If $e' \in E(G) \setminus \{ad, ai, cf, fh\}$, then $G$ contains a subgraph isomorphic to a graph from $\mathcal{C}$ and the above consideration implies that $G$ is not $4$-$\pch$-vertex-critical. Further, if $ad$ or $cf$ belongs to $E(G)$, then $G$ contains a non-spanning subgraph isomorphic to a graph from $\mathcal{F}_5$ (the vertices from $V(G)$ corresponding to the vertices of degree $3$ in a graph from $\mathcal{F}_5$ are $d,e$ or $b,c$). By Theorem \ref{izrek:druzine_so_4kriticne}, $G$ is not $4$-$\pch$-vertex-critical. Next, if at least one of the edges from $\{ai,fh\}$ belongs to $E(G)$, then $G \cong H_8$ or $G$ contains a non-spanning subgraph isomorphic to a graph from $\mathcal{F}_5$. In the first case, we have a contradiction to our assumption, and in the second case we derive that $G$ is not $4$-$\pch$-vertex-critical. If $H \cong H_8$ (see Fig.~\ref{h8}), then we analogously prove that $G$ is not $4$-$\pch$-vertex-critical (note that $H_7$ is a subgraph of $H_8$). 
Futher, consider the case when $H \cong H_9$ (see Fig.~\ref{h9}). It is easy to observe that $G$ contains a non-spanning subgraph isomorphic to a graph from $\{H_3, H_5, C_5\} \cup \mathcal{F}_2$. Hence, using the fact that $\pch(C_5)=4$ and Theorems \ref{izrek_4kriticni_grafi}, \ref{izrek:druzine_so_4kriticne}, we deduce that $G$ is not $4$-$\pch$-vertex-critical.

Next, suppose that $H \in \mathcal{F}_1$ (see Fig.~\ref{Druzina_F1}).
If $e'=u_1a$, $a \in \{v,v_1,v_2\} \cup \{x_1, x_2, \ldots, x_l\}$, then $G$ contains a (non-spanning) subgraph isomorphic to $H_1$ or a (non-spanning) subgraph isomorphic to a graph from $\mathcal{C}$ ($G$ contains a  subgraph obtained by attaching a vertex to two adjacent vertices of a cycle of order at least $4$, hence, by Lemma \ref{lemma:cikel+trikotnik=neustrezen_cikel}, $G$ also contains a subgraph isomorphic to a graph from $\mathcal{C}$). In both cases, the above consideration implies that $G$ is not a $4$-$\pch$-vertex-critical graph, if $\deg(u_1) \geq 3$. Analogously we prove that $G$ is not $4$-$\pch$-vertex-critical, if $\deg(u_2), \deg(v_1), \deg(v_2) \geq 3$. 
Now, let $e'=ab$, $a,b \in \{u,v\} \cup \{x_1, x_2, \ldots, x_l\}$. Then, $G$ contains a (non-spanning) subgraph isomorphic to $H_2$, a subgraph isomorphic to $H_4$ or a non-spanning subgraph obtained by attaching a single leaf to two adjacent vertices of a cycle of order at least $4$. In each case, the above reasoning and Lemma \ref{lemma:cikel+2lista_na_lihi_razdalji_pch_vsaj_4} imply that $\pch(G-v_2) \geq 4$. Consequently, $G$ is not $4$-$\pch$-vertex-critical. 

%
%
Now, let $H \in \mathcal{F}_2$ (see Fig.~\ref{Druzina_F2}). 
If $e'=u_1a$ (or $e'=u_2a$), $a \in \{v,v_1,v_2\} \cup \{x_1, x_2, \ldots, x_l\}$, then analogously as in the case when $H \in \mathcal{F}_1$, we prove that $G$ is not $4$-$\pch$-vertex-critical, if $\deg(u_1) \geq 3$ or $\deg(u_2) \geq 3$. 
If $e'=ab$, $a,b \in \{u\} \cup \{x_1, x_2, \ldots, x_l\}$, then again, analogously as in the case when $H \in \mathcal{F}_1$, we prove that $G$ is not $4$-$\pch$-vertex-critical. 
Next, $e'=uv$ implies that $G$ contains a non-spanning subgraph obtained by attaching a single leaf to two adjacent vertices of $C_n, n \geq 4$. Hence, Lemma \ref{lema_sosednji_vozlisci_stopnje_3_cikel_vsaj4barve} implies that $G$ is not $4$-$\pch$-vertex-critical.  
If $e'=uv_1$ or $e'=uv_2$, then $G$ contains a subgraph isomorphic to a graph from $\{H_2\} \cup \mathcal{C}$, hence the above reasoning yields that $G$ is not $4$-$\pch$-vertex-critical. 
Now, let $e'=vx_i$, $i \in \{1,2, \ldots, l\}$. Then, $G$ contains a non-spanning subgraph isomorphic to a graph from $\mathcal{F}_1$ or a non-spanning subgraph obtained by attaching a single leaf to two adjacent vertices of $C_n, n \geq 4$. Hence, Theorem \ref{izrek:druzine_so_4kriticne} and Lemma \ref{lema_sosednji_vozlisci_stopnje_3_cikel_vsaj4barve}  imply that $G$ is not a $4$-$\pch$-vertex-critical graph. 
If $e'=v_1v_2$, then $G \in \mathcal{F}_1$ or it contains a subgraph isomorphic to a graph from $\mathcal{F}_1$. We have a contradiction to our assumption or the above consideration yields that $G$ is not $4$-$\pch$-vertex-critical.
Finally, let $e'=v_1x_i$ or $e'=v_2x_i$, $i \in \{1,2, \ldots, l\}$. Then, $G$ contains a non-spanning subgraph isomorphic to a graph from $\mathcal{C} \cup \mathcal{F}_1 \cup \mathcal{F}_2 \cup \mathcal{F}_3$. The above consideration and Theorem \ref{izrek:druzine_so_4kriticne} imply that $G$ is not  $4$-$\pch$-vertex-critical. 
%

%
%
%
Next, let $H \in \mathcal{F}_3$ (see Fig.~\ref{Druzina_F3}). 
If $e' \in \{v_1a, v_2a, bc\}$, where  $a \in \{u, u_1, u_2, u_3\} \cup \{x_1, x_2, \ldots, x_l\}$, $b,c \in \{u, v \} \cup \{x_1, x_2, \ldots, x_l\}$, then analogously as in the case when $H \in \mathcal{F}_1$, we prove that $G$ is not $4$-$\pch$-vertex-critical. Further, if $e'=u_1v$ or $e'=u_3v$, then $G$ contains a non-spanning subgraph isomorphic to a graph from $\mathcal{C}$, so it is not $4$-$\pch$-vertex-critical. 
Also in the case when $e'=vu_2$, $G$ is not $4$-$\pch$-vertex-critical. Namely, $G$ contains a non-spanning subgraph obtained by attaching a single leaf to two vertices of $C_n, n \geq 4$, which are at distance $3$, and Lemma \ref{lemma:cikel+2lista_na_lihi_razdalji_pch_vsaj_4} implies the result.
Next, we observe that $e'=uu_2$ or $e'=u_1u_3$ imply that $G$ contains a non-spanning subgraph isomorphic to a graph from $\mathcal{F}_1$. Hence, by Theorem \ref{izrek:druzine_so_4kriticne}, $G$ is not a $4$-$\pch$-vertex-critical graph. 
Now, suppose that $e'=u_1x_i$, $i \in \{1,2, \ldots, l \}$. Then, $G$ contains a non-spanning subgraph ismorphic to $H_3$ or a non-spanning subgraph obtained by attaching a single leaf to two adjacent vertices of a cycle of order at least $4$. By Theorem \ref{izrek_4kriticni_grafi} and Lemma \ref{lemma:cikel+2lista_na_lihi_razdalji_pch_vsaj_4}, $G$ is not $4$-$\pch$-vertex-critical. 
If $e'=u_2x_i$, $i \in \{1,2, \ldots, l \}$, then $G$ contains a non-spanning subgraph ismorphic to a graph from $\{H_5\} \cup \mathcal{C}$ or a non-spanning subgraph obtained by attaching a single leaf to two vertices of $C_n, n \geq 4$, which are at distance $3$.
Hence, by Theorems \ref{thm:kriticni_CIKLI_rall_klavzar_izrek} and \ref{izrek_4kriticni_grafi}, and Lemma \ref{lemma:cikel+2lista_na_lihi_razdalji_pch_vsaj_4}, $G$ is not $4$-$\pch$-vertex-critical. 
Finally, if $e'=u_3x_i$, $i \in \{1,2, \ldots, l \}$, then $G$ contains a subgraph ismorphic to a graph from $\{H_4\} \cup \mathcal{C} \cup \mathcal{F}_2$. From the above consideration, we derive that $G$ is not a $4$-$\pch$-vertex-critical graph. 
%
%

Further, suppose that $H \in \mathcal{F}_4$ with the vertices labeled as in Fig.~\ref{druzina_F4}.
If $e' \in \{u_1y_1, u_2y_1, u_1u_2, v_1w_1, z_1w_{l'}, z_2w_{l'}\}$, then $G$ contains a subgraph isomorphic to a graph from $\mathcal{F}_2$. By the previous consideration, $G$ is not $4$-$\pch$-vertex-critical.
Further, suppose that $e' \in \{u_1v, u_1v_1, u_1z, u_1z_1, u_1z_2, u_2v, u_2v_1, u_2z, u_2z_1, u_2z_2, uv_1, uz, uz_1, uz_2, vz, vz_1, \\ vz_2, v_1z\}$. In this case, $G$ contains a subgraph isomorphic to a cycle from $\mathcal{C}$. Thus, by the above reasoning, $G$ is not  $4$-$\pch$-vertex-critical.
If $e' \in \{v_1z_1, v_1z_2, uv\} \cup \{uw_{i'}; 1 \leq i' \leq l'\}$, then $G$ contains a non-spanning subgraph obtained by attaching a single leaf to two vertices of $C_n, n \geq 4$, which are at distance $2k+1, k \geq 0$. Lemma \ref{lemma:cikel+2lista_na_lihi_razdalji_pch_vsaj_4} implies that $G$ is not a $4$-$\pch$-vertex-critical graph. 
Next, let $e'=ab$, $a,b \in \{v, y_i, w_{i'}; 1 \leq i \leq l, 1 \leq i' \leq l'\}$. In this case, $G$ contains a non-spanning subgraph isomorphic to $H_4$ or a non-spanning subgraph obtained by attaching a single leaf to two adjacent vertices of $C_n, n \geq 4$. Using Theorem \ref{izrek_4kriticni_grafi} and Lemma \ref{lemma:cikel+2lista_na_lihi_razdalji_pch_vsaj_4}, we derive that $G$ is not $4$-$\pch$-vertex-critical. 
Further, suppose that $e' \in \{u_1y_i, u_2y_i, z_1w_{i'}, z_2w_{i'}, u_1w_{j'}, u_2w_{j'}, z_1y_j, z_2y_j;~2 \leq i \leq l, 1 \leq i' \leq l'-1, 1 \leq j' \leq l', 1 \leq j \leq l\}$. Then, $G$ contains a non-spanning subgraph isomorphic to a graph from $\mathcal{F}_5 \cup \mathcal{C}$ or a non-spanning subgraph obtained by attaching a single leaf to two adjacent vertices of $C_n, n \geq 4$ (if $l'=0$). Hence, by the above consideration, Theorem \ref{izrek:druzine_so_4kriticne} and Lemma \ref{lemma:cikel+2lista_na_lihi_razdalji_pch_vsaj_4}, we infer that $G$ is not $4$-$\pch$-vertex-critical. 
Now, let $e'=z_1z_2$, $e'=uy_i$ or $e'=zw_{i'}$, where $1 \leq i \leq l$, $1 \leq i' \leq l'$. This implies that $G$ contains a subgraph isomorphic to a graph from $\mathcal{F}_2 \cup \mathcal{F}_3$ or a non-spanning subgraph obtained by attaching a single leaf to two adjacent vertices of $C_n, n \geq 4$. Therefore, the above reasoning and Lemma \ref{lemma:cikel+2lista_na_lihi_razdalji_pch_vsaj_4} imply that $G$ is not $4$-$\pch$-vertex-critical. 
If $e'=zy_i$, $1 \leq i \leq l$, then $G$ contains a non-spanning subgraph isomorphic to a graph $H_3$ or a non-spanning subgraph obtained by attaching a single leaf to two adjacent vertices of $C_n, n \geq 4$. In both cases, we derive that $G$ is not $4$-$\pch$-vertex-critical. 
Finally, let $e'=v_1y_i$ or $e=v_1w_{i'}$, $i \in \{1,2, \ldots, l-2, l\}$, $i' \in \{3, 4, \ldots, l'\}$. Then, $G$ contains a non-spanning subgraph isomorphic to a graph from $\{H_4\} \cup  \mathcal{C} \cup \mathcal{F}_4$. The above consideration implies that $G$ is not $4$-$\pch$-vertex-critical.

%
%
Finally, let $H \in \mathcal{F}_5$ (see Fig.~\ref{druzina_F5}). 
If $e'=ab$, $a,b \in \{u,u_1,v,v_1\} \cup \{x_1, x_2, \ldots, x_l\}$, then $G$ contains a non-spanning subgraph isomorphic to a graph from $\{H_3, H_4\}$, or a non-spanning subgraph obtained by attaching a single leaf to two adjacent vertices of $C_n, n \geq 4$. By Theorem \ref{izrek_4kriticni_grafi} and Lemma \ref{lema_sosednji_vozlisci_stopnje_3_cikel_vsaj4barve}, $G$ is not  $4$-$\pch$-vertex-critical. 
If $e' \in \{uu_2,u_1u_3, vv_2, v_1v_3\}$, then $G$ contains a subgraph isomorphic to a graph from $\mathcal{F}_2 \cup \mathcal{F}_3$. By the previous consideration, $G$ is not  $4$-$\pch$-vertex-critical. 
Now, suppose that $e'=u_3x_i$ or $e'=v_3x_i$, where $i \in \{1,2, \ldots, l\}$. In this case, $G$ contains a non-spanning subgraph isomorphic to a graph from $\{H_4\} \cup \mathcal{C} \cup \mathcal{F}_5$. Then, Theorems \ref{thm:kriticni_CIKLI_rall_klavzar_izrek},  \ref{izrek_4kriticni_grafi} and \ref{izrek:druzine_so_4kriticne} imply that $G$ is not  $4$-$\pch$-vertex-critical.
If $e' \in E(G) \setminus \{ab, uu_2,u_1u_3, vv_2, v_1v_3, u_3x_i, v_3x_i; ~ 1 \leq i \leq l\ \land a,b \in \{u,u_1,v,v_1\} \cup \{x_1, x_2, \ldots, x_l\}\}$, then $G$ contains a subgraph isomorphic to $H_7$ (note that $G \not \cong H_7$), a non-spanning subgraph isomorphic to a graph from $\mathcal{C} \cup \mathcal{F}_1$ or a non-spanning subgraph obtained by attaching a single leaf to two vertices of $C_n, n \geq 4$, which are at distance $2k+1, k \geq 0$. Using the above consideration, Theorems \ref{thm:kriticni_CIKLI_rall_klavzar_izrek}, \ref{izrek:druzine_so_4kriticne} and Lemma \ref{lemma:cikel+2lista_na_lihi_razdalji_pch_vsaj_4}, we derive that $G$ is not a $4$-$\pch$-vertex-critical graph. This completes the proof. 
 \qed
\end{proof}

%
%
%
%
%

\begin{theorem}
A graph $G$ is $4$-$\pch$-vertex-critical if and only if $G \in \mathcal{F}_1 \cup \mathcal{F}_2 \cup \mathcal{F}_3 \cup \mathcal{F}_4 \cup \mathcal{F}_5 \cup \mathcal{C}_5 \cup \mathcal{C}_6 \cup \mathcal{C} \cup \{K_4, H_1, H_2, \ldots, H_{8}, H_9\}$. 
\label{glavni_izrek}
\end{theorem}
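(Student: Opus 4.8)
The \emph{if} direction requires no new work: the members of $\mathcal{F}_1\cup\cdots\cup\mathcal{F}_5$ are $4$-$\pch$-vertex-critical by Theorem~\ref{izrek:druzine_so_4kriticne}, the graphs $K_4,H_1,\dots,H_9$ by Theorem~\ref{izrek_4kriticni_grafi}, and the members of $\mathcal{C}_5\cup\mathcal{C}_6\cup\mathcal{C}$ by Theorem~\ref{thm:kriticni_CIKLI_rall_klavzar_izrek}. The plan for the \emph{only if} direction is to reduce it, via Lemma~\ref{lemma:podgrafi_4-kriticnost}, to a statement about \emph{subgraphs}: if $G$ is $4$-$\pch$-vertex-critical and contains a subgraph isomorphic to some $H\in\mathcal{X}$, then Lemma~\ref{lemma:podgrafi_4-kriticnost} forces $G\in\mathcal{X}$ (a copy of $H$ inside a graph outside $\mathcal{X}$ would destroy criticality). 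Hence the whole theorem reduces to the claim that \emph{every $4$-$\pch$-vertex-critical graph contains a subgraph isomorphic to a member of $\mathcal{X}$}, which I would prove by contradiction.

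So assume $G$ is $4$-$\pch$-vertex-critical and contains no subgraph isomorphic to any graph in $\mathcal{X}$. In particular $G$ has no $K_4$ and no cycle $C_n$ with $n\ge 5$, $n\not\equiv 0\pmod 4$; hence by Lemma~\ref{lemma:vodoravna_povezava}, in any BFS layering of $G$ every ``horizontal'' edge lies on a triangle, and the only cycle lengths in $G$ are $3$, $4$ and multiples of $4$. Fix a vertex $x$ with $\pch(G-x)=3$ (the alternative, $\pch(G-y)\le 2$ for all $y$, easily forces $\pch(G)\le 3$). By Proposition~\ref{goddard-pch=3}, $G-x\in\mathcal{G}_3$, so $V(G)\setminus\{x\}$ splits into the sets $V_0,\dots,V_7$ of Section~2, with the distance constraints recorded there (any two vertices of $V_1$, resp.\ of $V_3$, lie at distance divisible by $4$, and so on). Since reinserting $x$ raises $\pch$ to $4$, the vertex $x$ cannot be coloured in \emph{any} $3$-packing colouring of $G-x$; running through the colourings associated with the partition, this forces the neighbours of $x$ to use up all three colours $1,2,3$, so $x$ either completes a triangle with two of its neighbours, attaches to a short path along which the colours $1,2,3$ are forced, or attaches to the two short neighbours of a degree-$3$ vertex. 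In each case $G$ acquires, around $x$, a small colour-forcing ``core'': a $K_3$, a $C_4$, or a degree-$3$ vertex with its attached $P_3$ or $P_4$.

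From there I would follow the forced adjacencies of $G$ outward from $x$ and from each core, using criticality repeatedly: since $G-y\in\mathcal{G}_3$ for \emph{every} $y$, no superfluous vertex or edge may occur, so the rest of $G$ must consist of subdivided paths whose lengths modulo $4$ are exactly those prescribed in the definitions of $\mathcal{F}_1,\dots,\mathcal{F}_5$; if instead $G$ is too small to contain a full family member it collapses to one of $K_4,H_1,\dots,H_9$, and any extra chord or attachment near a cycle yields, through Lemmas~\ref{lemma:cikel+trikotnik=neustrezen_cikel} and~\ref{lemma:cikel+2lista_na_lihi_razdalji_pch_vsaj_4}, a subgraph isomorphic to a member of $\mathcal{C}_5\cup\mathcal{C}_6\cup\mathcal{C}$. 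In every case $G$ contains a subgraph in $\mathcal{X}$, contradicting the assumption; hence $G\in\mathcal{X}$.

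The main obstacle is the length and bookkeeping of this case analysis, which mirrors and partly reverses the argument of Lemma~\ref{lemma:podgrafi_4-kriticnost}. I expect two points to be delicate: (i) showing that $x$ must attach to $G-x$ in only a handful of ways — this relies on the rigidity of the $V_0,\dots,V_7$ partition together with the ``distance $\equiv 0\pmod 4$'' constraints, which tightly restrict where the colour-$3$ and $T$-add vertices of $G-x$ can sit relative to $N(x)$; and (ii) controlling the cycles of length $8,12,\dots$ that $G$ is still allowed to contain, so that they do not conceal a configuration outside $\mathcal{X}$ — here Lemma~\ref{lemma:vodoravna_povezava} and Lemma~\ref{lemma:cikel+2lista_na_lihi_razdalji_pch_vsaj_4} (a leaf attached to two vertices of a cycle at odd distance already forces $\pch\ge 4$, so a proper such subgraph breaks criticality) do the heavy lifting.
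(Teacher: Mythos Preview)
Your reduction is exactly right and matches the paper: the \emph{if} direction is immediate from Theorems~\ref{thm:kriticni_CIKLI_rall_klavzar_izrek}, \ref{izrek_4kriticni_grafi}, \ref{izrek:druzine_so_4kriticne}, and for the \emph{only if} direction Lemma~\ref{lemma:podgrafi_4-kriticnost} reduces everything to showing that an $\mathcal{X}$-free graph cannot be $4$-$\pch$-vertex-critical. From that point on, however, your plan and the paper diverge substantially.

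The paper does \emph{not} work with the $\mathcal{G}_3$-structure of $G-x$ for a deleted vertex $x$. Instead it splits on whether $G$ contains the tree $T$ of Fig.~\ref{fig:T}. When it does (the main case), the paper roots a BFS at the centre $y$ of that copy of $T$ and analyses $G$ itself: it shows that every degree-$\ge 3$ vertex in an odd BFS layer satisfies three rigid ``Properties'' (unique parent, triangle-mates of degree $2$, children that are leaves), pins all such vertices to layers $\equiv 3\pmod 4$ and all triangle-apices to layers $\equiv 2\pmod 4$, and then \emph{explicitly builds} a partition $V_0',\dots,V_7'$ of $V(G)$ witnessing $G\in\mathcal{G}_3$. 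The contradiction is $\pch(G)=3$, not the discovery of a hidden $\mathcal{X}$-subgraph. When $G$ has no copy of $T$, the graph is so constrained (essentially a path with at most two branching points, possibly with a single triangle) that a direct $3$-packing colouring is written down.

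Your route through $G-x$ is not obviously wrong, but it has a real gap you have not addressed: the decomposition $V_0,\dots,V_7$ of a graph in $\mathcal{G}_3$ is \emph{not canonical}, and your ``core'' argument only tells you that every $3$-packing colouring of $G-x$ blocks $x$ for colours $1,2,3$ --- which constrains vertices at distance up to $3$ from $x$, not merely $N(x)$. Turning this local obstruction into one of the global patterns in $\mathcal{F}_1,\dots,\mathcal{F}_5$ is exactly the hard part, and ``follow the forced adjacencies outward'' is where the paper's choice of BFS root (the centre of $T$, guaranteed to have three non-leaf branches) earns its keep: it gives a fixed anchor against which all the $\pmod 4$ layer arithmetic is measured. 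Without an analogue of that anchor, your case analysis on the attachment of $x$ is likely to balloon, since different choices of the $V_i$-partition of $G-x$ will place $N(x)$ in different classes. If you want to pursue your approach, you would need to prove a uniqueness or rigidity statement for the $\mathcal{G}_3$-decomposition first; the paper sidesteps this entirely by constructing the decomposition of $G$ from scratch.
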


\begin{proof}

Let $\mathcal{X}=\mathcal{F}_1 \cup \mathcal{F}_2 \cup \mathcal{F}_3 \cup \mathcal{F}_4 \cup \mathcal{F}_5 \cup \mathcal{C}_5 \cup \mathcal{C}_6 \cup \mathcal{C} \cup \{K_4, H_1, H_2, \ldots, H_{8}, H_9\}$. If $G \in \mathcal{X}$, then Theorems  \ref{thm:kriticni_CIKLI_rall_klavzar_izrek}, \ref{izrek_4kriticni_grafi} and \ref{izrek:druzine_so_4kriticne} imply that $G$ is $4$-$\pch$-vertex-critical. 

To prove the converse implication, let $G$ be a $4$-$\pch$-vertex-critical graph. The assumption that $G \notin \mathcal{X}$ will lead us to a contradiction.
%
%
%
%
Note that $G$ is connected, and by Lemma \ref{lemma:podgrafi_4-kriticnost}, $G$ does not contain a subgraph isomorphic to a graph from $\mathcal{X}$.

In the sequel of this proof, we distinguish two cases.


\textbf{Case 1.} $G$ contains a subgraph isomorphic to $T$, which is shown in Fig.~\ref{fig:T}.

\textbf{Case 1.1.} $T$ is a spanning subgraph of $G$ ($V(T)=V(G)$). 

Let the vertices of $T$ be denoted as is shown in Fig.~\ref{fig:T}. 
Since $G$ is $4$-$\pch$-vertex-critical, Lemma \ref{drevoT=3_barve} implies $G \not \cong T$. Then, $G$ contains at least one of the edges from $\{ab, ac, ad, ae, bc, be, cd, de, dy', ey'\}$. 
Using the facts that $G \notin \mathcal{X}$ and it does not contain a subgraph isomorphic to a graph from $\mathcal{X}$, we infer that $bc, be, cd, de \notin E(G)$. 
Next, it is easy to observe that $\pch(G+f)=3$ for any $f \in \{ab, ac, ad, ae, dy', ey'\}$. This means that $G$ contains at least two edges from  $\{ab, ac, ad, ae, dy', ey'\}$. 
First, consider the case when exactly two of the mentioned edges belong to $E(G)$. If $\{ab, dy'\} \subset E(G)$, $\{ac, ey'\} \subset E(G)$, $\{ad, dy'\} \subset E(G)$, $\{ad, ey'\} \subset E(G)$, $\{ae, dy'\} \subset E(G)$ or $\{ae, ey'\} \subset E(G)$, then $\pch(G)=3$, a contradiction to $G$ being a $4$-$\pch$-vertex-critical graph. Otherwise, $G$ contains a  subgraph isomorphic to $H_1, H_2$ or to a graph from $\mathcal{C}$. This is a contradiction, since $G$ does not contain a subgraph isomorphic to a graph from $\mathcal{X}$.
Hence, $G$ contains at least three edges from $\{ab, ac, ad, ae, dy', ey'\}$. By reasoning from the case when $G$ contains exactly two edges from this set, we derive a contradiction (since $G$ does not contain a subgraph isomorphic to a graph from $\mathcal{X}$) for any triple of the chosen edges. 


\textbf{Case 1.2.} $T$ is not a spanning subgraph of $G$ ($V(T) \neq V(G)$). 

Let $y \in V(G)$ be a vertex belonging to a subgraph of $G$ isomorphic to $T$, and corresponding to the vertex $y' \in V(T)$. 
Further, let $N_i=\{u;~d(u,y)=i\}$ for any $i \geq 1$ and let $N_0=\{y\}$. 
In the sequel of this proof, we present some properties of $G$, which ensure that $G \in \mathcal{G}_3$ (and this will contradict to our assumption).  


First, consider the vertices from $\bigcup_{j\equiv 1,3\pmod{4}} N_j$ with degree at least $3$. Let $A=\{x;~\deg(x)\geq 3, x \in N_j, j\equiv 1,3\pmod{4}\}$ and let $x \in A$ ($x \in N_j$) be an arbitrary vertex. \\
We claim that $x$ has exactly one neighbor from $N_{j-1}$. 
Suppose to the contrary that $x$ is adjacent to distinct vertices $x_1, x_2 \in N_{j-1}$ (note that in this case $j \geq 3$). Then, there exist $x_1', x_2' \in N_{j-2}$ such that $x_1x_1', x_2x_2' \in E(G)$. If $x_1'=x_2'$, then $G$ contains a subgraph isomorphic to a graph from $\mathcal{F}_5 \cup \mathcal{C}$ (the vertices corresponding to the vertices of degree $3$ in a graph from $\mathcal{F}_5$ are $y$ and $x_1'$), a contradiction to our assumption. 
Thus, $x_1' \neq x_2'$. If $j \geq 5$, we again derive that $G$ contains a subgraph isomorphic to a graph from $\mathcal{F}_5$ (in this case, the vertices corresponding to the vertices of degree $3$ in a graph from $\mathcal{F}_5$ are $y$ and $x$), which contradicts to our assumption.
Otherwise, $x_1'y, x_2'y \in E(G)$, which implies that $G$ contains a subgraph isomorphic to a graph from $\mathcal{C}$, again a contradiction to our assumption.  
Therefore, $x$ has exactly one neighbor from $N_{j-1}$. \\
Now, suppose that there exist $x_1,x_2,x_3 \in V(G)$ such that $xx_1, xx_2, xx_3 \in E(G)$,  $x_1 \in N_{j-1}$, $x_2 \in N_{j}$ and $x_3 \in N_{j} \cup N_{j+1}$. We prove that all vertices from $N_j \cap N(x)$ are adjacent exactly to $x$ and $x_1$ (have degree $2$). 
Since $x_2 \in N_j$, it has a neighbor from $N_{j-1}$. If $x_2x_1  \notin E(G)$, then there exists $x_2' \in N_{j-1}$ such that $x_2x_2' \in E(G)$. But then, $G$ contains a subgraph isomorphic to a graph from $\mathcal{F}_5$ (in this case the vertices corresponding to the vertices of degree $3$ in a graph from $\mathcal{F}_5$ are $y$/$x_1$ and $x$) or a subgraph isomorphic to a graph from $\mathcal{C}$. In both cases, we have a contradiction to our assumption.
Consequently, every vertex from $N_j$ adjacent to $x$ is also adjacent to $x_1$. Moreover, each such vertex $x_2$ has degree $2$. Indeed, if $x_2x_3 \in E(G)$, then $G$ contains a subgraph isomorphic to a graph from $\{H_1, K_4\}$, a contradiction to $G$ being $4$-$\pch$-vertex-critical. If there exists $x_2'' \in V(G)$ such that $x_2x_2'' \in E(G)$ ($x_2'' \neq x_3$), then $G$ contains a subgraph isomorphic to $\{H_1, H_3\}$, again a contradiction to $G$ being $4$-$\pch$-vertex-critical. 
Hence, each vertex from $N_j \cap N(x)$ has degree $2$; it is adjacent to $x$ and $x_1$. \\
%
%
Next, we claim that each vertex from $N_{j+1} \cap N(x)$ is a leaf.  Let $x_1,x_2,x_3 \in V(G)$ such that $xx_1, xx_2, xx_3 \in E(G)$, $x_1 \in N_{j-1}$, $x_2 \in N_{j} \cup N_{j+1}$ and $x_3 \in N_{j+1}$. If $x_2x_3 \in E(G)$, then $G$ contains a subgraph isomorphic to a graph from $\mathcal{F}_2 \cup \mathcal{F}_3$, a contradiction to our assumption. 
Further, if there exists $x_3' \in V(G)$ such that $x_3x_3' \in E(G)$ ($x_3' \neq x_2$), then $G$ contains a subgraph isomorphic to a graph from $\{H_3, H_4, H_5, C_5\} \cup \mathcal{F}_5$, again a contradiction to our assumption. 
Hence, each vertex from $N_{j+1} \cap N(x)$ is a leaf. \\
In conclusion, each $ x \in \bigcup_{j\equiv 1,3\pmod{4}} N_j$ either has degree at most $2$ or it belongs to the set $A$ (has degree at least $3$). In the second case, $x \in N_j$ and its neighbors satisfy the following properties.\\
\textbf{Property 1}: Vertex $x$ has exactly one neighbor from $N_{j-1}$, say $x_1$. \\
\textbf{Property 2}: All vertices from $N_j \cap N(x)$ are adjacent exactly to $x$ and $x_1$ (have degree $2$). \\
\textbf{Property 3}: Each vertex from $N_{j+1} \cap N(x)$ is a leaf. \\ 
We observe that these properties imply that $N[x] \setminus \{x_1\}$ induces a $T$-add attached to $x_1$. Therefore, each vertex from $A$ belongs to a $T$-add.\\
%
%
%
Next, we prove that $A \subseteq \bigcup_{j\equiv 1\pmod{4}} N_j$ or $A \subseteq \bigcup_{j\equiv 3\pmod{4}} N_j$. Clearly, if $|A| \leq 1$, then the statement holds. Hence, consider the case when $|A| \geq 2$. Suppose to the contrary that there exist $x_1, x_2 \in A$ such that $x_1 \in N_{j_1}, j_1 \equiv 1\pmod{4}$ and $x_2 \in N_{j_2}, j_2 \equiv 3\pmod{4}$. Let $P_1$ be a shortest $x_1$-$y$-path, $P_2$ a shortest $x_2$-$y$-path and $w \in P_1 \cap P_2$ such that the distance between $y$ and $w$ is the largest possible. Note that $w \in N_j, j \equiv 0,2\pmod{4}$ and consequently, $w \neq x_1, x_2$. 
Namely, if $w \in N_j, j \equiv 1,3\pmod{4}$, then $w \in A$ (note that $\deg(w) \geq 3$) and thus, by Property 3, each vertex from $N(w) \cap N_{j+1}$ is a leaf. This is implies that $x_1,x_2$ are both adjacent to $w$ (or one of them is $w$) and both belong to $N_{j+1}$ (or one to $N_j$), a contradiction, since $x_1 \in N_{j_1}, j_1 \equiv 1\pmod{4}$ and $x_2 \in N_{j_2}, j_2 \equiv 3\pmod{4}$.
%
%
%
%
Therefore, without loss of generality we may assume that $w \in N_j, j \equiv 0\pmod{4}$. Then, $d(w, x_1) \equiv 1\pmod{4}$ and $d(w, x_2) \equiv 3\pmod{4}$. This implies that $G$ contains a subgraph isomorphic to a graph from $\mathcal{F}_4$ (the vertices corresponding to some of the vertices of degree $3$ in a graph from $\mathcal{F}_4$ are $x_1, x_2$ and $w$), a contradiction to our assumption. Thus, $A \subseteq \bigcup_{j\equiv 1\pmod{4}} N_j$ or $A \subseteq \bigcup_{j\equiv 3\pmod{4}} N_j$. Without loss of generality we may assume that $A \subseteq \bigcup_{j\equiv 3\pmod{4}} N_j$.  \\
%
%
%
%
%
We summarize our findings as follows. Since $A \subseteq \bigcup_{j\equiv 3\pmod{4}} N_j$, each vertex from $\bigcup_{j\equiv 1\pmod{4}} N_j$ has degree at most $2$. Next, it is possible that $G$ contains $T$-adds, which are attached to the vertices from $\bigcup_{j\equiv 2\pmod{4}} N_j$. In particular, it is possible that $G$ contains subgraphs isomorphic to $K_3$, which satisfy the following property: one vertex of each such triangle belongs to $N_j$ and the other two to $N_{j+1}$ for some $j \equiv 2\pmod{4}$ (we have proved that $j+1 \equiv 3\pmod{4}$). Now, we check whether $G$ contains a subgraph isomorphic to $K_3$ of a different type (i.e., triangle, which does not satisfy the property that one of its vertices belongs to $N_j$ and the other two to $N_{j+1}$ for some $j \equiv 2\pmod{4}$). 

Let $z,z_1,z_2 \in V(G)$ such that $zz_1, zz_2, z_1z_2 \in E(G)$. Clearly, if $a \in V(G)\cap N_j$, $b \in V(G) \cap N_i$ and $|j-i| \geq 2$, then $ab \notin E(G)$. Therefore, we can assume that $z,z_1,z_2 \in N_j \cup N_{j+1}$, $j \geq 0$. Moreover, let $z \in N_j$ and $z_1, z_2 \in N_j \cup N_{j+1}$, $j \geq 0$. Further, let $z' \in V(G)$ such that $zz' \in E(G)$ and $z' \in N_{j-1}$ (or $z' \in N_{j+1}$ if $j=0$).
If $j \equiv 1,3\pmod{4}$, then $z \in A$ and consequently, it satisfies the Properties 1, 2 and 3. This implies that $z_1$ (respectively, $z_2$) is a leaf, or it has degree $2$ and is adjacent to $z$ and $z'$. In both cases, we have a contradiction, since $z_1$, $z_2$ do not satisfy the written properties. Therefore, $j\equiv 0,2\pmod{4}$. \\
Now, we prove that $z_1, z_2 \in N_{j+1}$. Suppose to the contrary that $z_1 \in N_j$. This implies that $j \geq 2$ ($j\neq 0$, because $|N_0|=1$). 
Note that $z_1$ has a neighbor from $N_{j-1}$. If $z'z_1 \in E(G)$, then $G$ contains a subgraph isomorphic to $H_1$, a contradiction to $G$ being $4$-$\pch$-vertex-critical. Therefore, there exists $z_1' \in N_{j-1}$ such that $z_1z_1' \in E(G)$. Since $z', z_1' \in N_{j-1}$, there exist $z'', z_1'' \in N_{j-2}$ such that $z_1'z_1'', z'z'' \in E(G)$. 
If $z_1''=z''$, then $G$ contains a graph from $\mathcal{C}$ (namely $C_5$) as a non-spanning subgraph, which contradicts to $G$ being $4$-$\pch$-vertex-critical. Otherwise, $G$ contains a subgraph isomorphic to $H_4$, a contradiction to our assumption. Thus, $z_1, z_2 \in N_{j+1}$. \\
Therefore, each subgraph of $G$ isomorphic to $K_3$ has exactly one vertex from $N_j$ and the other two vertices from $N_{j+1}$ for some $j\equiv 0,2\pmod{4}$. \\
Let $B=\{z;~z \in N_j, j\equiv 0, 2\pmod{4}, z$ belongs to a subgraph of $G$ isomorphic to $K_3\}$. \\ 
We prove that $B \subseteq \bigcup_{j\equiv 0\pmod{4}} N_j$ or $B \subseteq \bigcup_{j\equiv 2\pmod{4}} N_j$.
Let $z, z' \in B$ and suppose that $z \in N_{j_1}, j_1 \equiv 0\pmod{4}$ and $z' \in N_{j_2}, j_2 \equiv 2\pmod{4}$. Let $P_1'$ be a shortest $z$-$y$-path, $P_2'$ a shortest $z'$-$y$-path and $w' \in P_1' \cap P_2'$ such that the distance between $y$ and $w'$ is the largest possible. 
Note that $w' \in N_j, j \equiv 0,2\pmod{4}$. Namely, if $w' \in N_j, j \equiv 1,3\pmod{4}$, then $G$ contains a subgraph isomorphic to a graph from $\mathcal{F}_5$ (the vertices corresponding to the vertices of degree $3$ in a graph from $\mathcal{F}_5$ are $y$ and $w'$) or a subgraph isomorphic to $H_5$. In both cases, we have a contradiction to our assumption. Thus, $w' \in N_j, j \equiv 0,2\pmod{4}$. But then, $G$ contains a subgraph from $\mathcal{F}_1$ and Lemma \ref{lemma:podgrafi_4-kriticnost} again implies a contradiction to $G$ being $4$-$\pch$-vertex-critical. Thus, $B \subseteq \bigcup_{j\equiv 0\pmod{4}} N_j$ or $B\subseteq \bigcup_{j\equiv 2\pmod{4}} N_j$. 
Further, we prove that $B \subseteq \bigcup_{j\equiv 2\pmod{4}} N_j$.
%
%
First, consider the case when $A \neq \emptyset$. Let $x \in A$ and let  $z \in B$. Then, $x \in N_{j_1}, j_1\equiv 3\pmod{4}$ and $z\in N_{j_2}, j_2\equiv 0,2\pmod{4}$. Suppose that $j_2\equiv 0\pmod{4}$.
Let $Q_1$ be a shortest $x$-$y$-path, $Q_2$ a shortest $z$-$y$-path and $v \in Q_1 \cap Q_2$ such that the distance between $y$ and $v$ is the largest possible. Analogously as for $w'$, also in this case we prove that $v\in N_j, j \equiv 0,2\pmod{4}$ and without loss of generality assume that $v \in N_j, j \equiv 0\pmod{4}$. This implies that $G$ contains a subgraph isomorphic to a graph from $\mathcal{F}_2$, a contradiction to our assumption.
Hence, $z \in N_{j_2}, j_2\equiv 2\pmod{4}$ and consequently, $B\subseteq \bigcup_{j\equiv 2\pmod{4}} N_j$.  
Note that, if $|A| = 0$, then without loss of generality we may assume that $B \subseteq \bigcup_{j\equiv 2\pmod{4}} N_j$.\\
In conclusion, each subgraph of $G$ isomorphic to $K_3$ satisfies the above written property: one of its vertices, denoted by $z$, belongs to $N_j$ and the other two, denoted by $x_1,x_2$, to $N_{j+1}$, where $j \equiv 2\pmod{4}$. Moreover, if $x_1$ (respectively, $x_2$) belongs to $A$ (i.e., has degree at least $3$), then it satisfies the Properties 1,2 and 3. Therefore, the vertices from $(N[x_1] \cup N[x_2]) \setminus \{z\}$ induce a $T$-add to a vertex $z$. 

Now, we make a partition of $V(G)$ into the sets $V_0', V_1', \ldots, V_7'$ as follows. \\
$V_0'=\{u \in N_j;~ j  \equiv 1\pmod{4}, ~$deg$(u)=1\}$, \\ 
$V_1'=\{u \in N_j;~ (j  \equiv 0\pmod{4}) \land (\exists u' \in N_{j-1}, uu' \in E(G), ~$deg$(u')=2)\} \cup \{y\}$,  \\
$V_2'=\{u \in N_j;~j  \equiv 1,3\pmod{4}, ~$deg$(u)=2,  u$ does not belong to a subgraph of $G$ isomorphic to $K_3\}$, \\
$V_3'=\{u \in N_j;~ j  \equiv 2\pmod{4}\}$ (note that $B \subseteq V_3'$), \\
$V_4'=\{u \in N_j;~ j  \equiv 3\pmod{4}, ~$deg$(u)=1\}$, \\
$V_5'=\{u \in N_j;~ j  \equiv 3\pmod{4},~ $deg$(u)=2, u$ belongs to a subgraph of $G$ isomorphic to $K_3\}$,\\
$V_6'=\{u \in N_j;~ j  \equiv 3\pmod{4},~ $deg$(u) \geq 3\}=A$, \\
$V_7'=\{u \in N_j;~ (j  \equiv 0\pmod{4})\land  ( \exists u' \in N_{j-1}, uu' \in E(G), ~$deg$(u')\geq 3)\}$. \\
First, we claim that each vertex from $V(G)$ belongs to exactly one set of $V_0', V_1', \ldots, V_7'$. Let $u \in V(G)$ be an arbitrary vertex. Suppose that $u \in N_j, j \equiv 0\pmod{4}$. If $u=y$, then $u \in V_1'$. Otherwise, $u$ has a neighbor $u' \in N_{j-1}$. Clearly, $u'$ is not a leaf. If deg$(u') \geq 3$, then $u' \in A$ and using Property 3, we infer that $u$ is a leaf. Consequently, there does not exist $u'' \in N_{j-1}, uu'' \in E(G)$ with deg$(u'')=2$, which implies that $u$ is only in $V_7'$ (clearly, $u' \neq y$). If deg$(u')=2$, then analogously we derive that there does not exist $u''' \in N_{j-1}, uu''' \in E(G)$ with deg$(u''') \geq 3$. This means that $u$ is only in $V_1'$. 
Therefore, $u$ belongs either to $V_1'$ or to $V_7'$ and clearly, $u \notin V_0' \cup V_2' \cup V_3' \cup V_4' \cup V_5' \cup V_6'$. 
Next, let $u \in N_j, j \equiv 1\pmod{4}$. If $\deg(u) =1$, then $u \in V_0'$. 
Further, suppose that deg$(u)=2$. Note that $u$ does not belong to a subgraph of $G$ isomorphic to $K_3$, since each subgraph of $G$ isomorphic to $K_3$ has one vertex in $B \subseteq \bigcup_{j\equiv 2\pmod{4}} N_j$ and the other two in $\bigcup_{j\equiv 3\pmod{4}} N_j$. This implies that $u \in V_2'$. 
Next, suppose that deg$(u) \geq 3$.  Then $u \in A$ and consequently, $u \in \bigcup_{j\equiv 3\pmod{4}} N_j$, a contradiction since $u \in N_j, j \equiv 1\pmod{4}$. Therefore, $u$ belongs to exactly one of the sets $V_0'$, $V_2'$, and $u \notin V_1' \cup V_3' \cup V_4' \cup V_5' \cup V_6' \cup V_7'$. 
Next, using the definitions of the sets $V_0', V_1', \ldots, V_7'$, we derive that each $u \in N_j, j \equiv 2\pmod{4}$ belongs only to the set $V_3'$. 
Finally, let $u \in N_j, j \equiv 3\pmod{4}$. If $u$ is a leaf, then $u \in V_4'$. If deg$(u) \geq 3$, then $u \in V_6'$. Further, let deg$(u)=2$. If $u$ belongs to a subgraph of $G$ isomorphic to $K_3$, then $u \in V_5'$, and otherwise, $u \in V_2'$. Therefore, $u$ belongs to exactly one of the sets $V_2'$, $V_4'$, $V_5'$, $V_6'$ and $u \notin V_0' \cup V_1' \cup V_3' \cup V_7'$. These observations imply that $(V_0', V_1', \ldots, V_7')$ is a partition of $V(G)$.

As we mentioned above, our goal is to prove that  $G \in \mathcal{G}_3$. Hence, we show that $G$ is formed by taking a bipartite multigraph with bipartition $(V_1',V_3')$, subdividing every edge exactly once, adding leaves to some vertices in $V_1'\cup V_3'$, and then performing a single $T$-add to some vertices in $V_3'$.

First, we prove that $V_1' \cup V_3'$ induces a bipartite multigraph in $G$, in which each edge is subdivided exactly once. Clearly, each vertex from $V_1' \cup V_3'$ belongs to exactly one of the sets $V_1'$, $V_3'$. Therefore, we only need to prove that $d(v_1, v_1')=4k_1$, $k_1 \geq 1$, and $d(v_3, v_3')=4k_3$, $k_3 \geq 1$, for any $v_1, v_1' \in V_1'$ and any $v_3, v_3' \in V_3'$. 
Let $v_1, v_1'$ be two distinct vertices from $V_1'$. Further, let $P$ be a shortest $v_1$-$v_1'$-path,  $Q_1$ a shortest $v_1$-$y$-path, $Q_1'$ a shortest $v_1'$-$y$-path and $w \in Q_1 \cap Q_1'$ such that the distance between $y$ and $w$ is the largest possible.
Recall that $v_1, v_1' \in \bigcup_{j\equiv 0\pmod{4}} N_j$. If $w\in N_j, j \equiv 1,3 \pmod{4}$, then $w \in A$, because $\deg(w) \geq 3$. Using Property 3, we derive that $wv_1, wv_1' \in E(G)$ ($v_1,v_1'$ are leaves) and consequently, $v_1,v_1' \in V_7'$. Since $(V_0, V_1', \ldots, V_7')$ is a partition of $V(G)$, $v_1,v_1' \notin V_1'$, a contradiction. Therefore, $w\in N_j, j \equiv 0,2\pmod{4}$.
Clearly, if $w \in P$, then $d(v_1,v_1') = 4k_1$ for some $k_1 \geq 1$. 
Next, consider the case when $w \notin P$. 
If $P \cap (Q_1 \cup Q_1') =\{v_1, v_1'\}$, then $d(v_1, v_1') = 4k_1$ for some $k_1 \geq 1$. 
Namely, otherwise $G$ contains a subgraph isomorphic to a graph from $\mathcal{C}$, a contradiction to our assumption. 
%
%
Now, let $P \cap (Q_1 \cup Q_1') \neq \{v_1,v_1'\}$.  Without loss of generality we may assume that $|P \cap Q_1| \geq 2$ and let $w_1 \in P \cap Q_1$, $w_1 \neq v_1$, such that the distance between $v_1$ and $w_1$ is the largest possible. Further, let $w_1' \in P \cap Q_1'$ ($w_1'$ can be $v_1'$, but $w_1 \neq w_1'$) such that the distance between $v_1'$ and $w_1'$ is the largest possible.
Note that $d(v_1,w_1) \neq 1$. Namely, if $v_1$ and $w_1$ are adjacent, then $v_1 \in V_7'$ (note that deg$(w_1) \geq 3$), a contradiction. Analogously, if $v_1' \neq w_1'$, then $d(v_1',w_1') \neq 1$.
Moreover, $w_1\in N_j, j \equiv 0,2\pmod{4}$. Indeed, $w_1\in N_j, j \equiv 1,3\pmod{4}$ implies that $w_1 \in A$ and consequently, $w_1v_1 \in E(G)$, which is not true.
Analogously we derive that $w_1'\in N_j, j \equiv 0,2\pmod{4}$. 
Next, we observe that the vertices from $Q_1$ between $w_1$ and $w$, the vertices from $Q_1'$ between $w_1'$ and $w$, and the vertices from $P$ between $w_1$ and $w_1'$ form a cycle $C_n$ in $G$, $n \geq 3$. 
Note that $w_1w \notin E(G)$ since $w_1\in N_{j_1}, j_1 \equiv 0,2\pmod{4}$ and $w\in N_{j_2}, j_2 \equiv 0,2\pmod{4}$, but $j_1 \neq j_2$ (clearly, $w \notin P$, $w_1  \in P$, hence $w_1 \neq w$). Thus, $C_n$ is not a triangle. Therefore, $n \geq 4$ and moreover, $n=4a, a \geq 1$, since $G$ does not contain a subgraph from $\mathcal{C}$. 
%
%
This implies that $d(w_1,w_1')=4k$, $k \geq 1$, if $w_1\in N_{j_1}, w_1'\in N_{j_2},j_1, j_2 \equiv 0\pmod{4}$ or $w_1\in N_{j_1}, w_1'\in N_{j_2},j_1, j_2 \equiv 2\pmod{4}$, and otherwise $d(w_1,w_1')=4l+2$, $l \geq 0$. 
In the first case, $d(v_1,w_1)+d(v_1'+w_1')=4k', k' \geq 1$. Thus, $d(v_1,v_1')=4k_1, k_1 \geq 1$. In the second case, $d(v_1,w_1)+d(v_1'+w_1')=4l'+2, l' \geq 0$, and hence, $d(v_1,v_1')=4l_1, l_1 \geq 1$.
Analogously, we prove that $d(v_3,v_3') = 4k_3$ for some $k_3 \geq 1$. Therefore, $V_1' \cup V_3'$ induces a subdivided bipartite multigraph in $G$. 

Now, we prove that $V_2'$ is the set of all vertices obtained by subdivision of bipartite multigraph with bipartition $(V_1', V_3')$. We have to show that each $v_2 \in V_2'$ has exactly two neighbors: one from $V_1'$ and the other from $V_3'$.
Let $v_2 \in V_2'$ be an arbitrary vertex. Then, $v_2 \in N_j, j \equiv 1,3\pmod{4}$. 
First, consider the case when $j \equiv 1\pmod{4}$. Clearly, $v_2v \in E(G)$ for some $v \in N_{j-1}$ ($j-1 \equiv 0\pmod{4}$). 
This implies that $v \in V_1' \cup V_7'$. It is easy to observe that each vertex from $V_7'$ has a neighbor from $V_6'=A$. By Property 3, each vertex from $V_7'$ is a leaf. This implies that $v \in V_1'$. Therefore, $N(v_2) \cap N_{j-1} \subseteq V_1'$. Since the vertices from $V_1'$ are pairwise at distance $4a$, $a \geq 1$, we derive that $|N(v_2) \cap N_{j-1}|=1$ ($v_2$ has exactly one neighbor from $N_{j-1}$, more precisely, from $V_1'$). Suppose that $v_2u \in E(G)$, $u \in N_j$. Then, $u \in V_0' \cup V_2'$.  
By Lemma \ref{lemma:vodoravna_povezava}, $v_2$ belongs to a subgraph of $G$ isomorphic to $K_3$, a contradiction to the definition of $V_2'$. Since deg$(v_2)=2$, $v_2$ has one neighbor from $N_{j+1}$, namely from $V_3'$ (note that $j+1 \equiv 2\pmod{4}$). In conclusion, $v_2$ has one neighbor from $V_1'$ and the other from $V_3'$.
Next, suppose that $j \equiv 3\pmod{4}$. Clearly, $v_2v \in E(G)$ for some $v \in N_{j-1}$. Since $j-1 \equiv 2\pmod{4})$, we infer that $N(v_2) \cap N_{j-1} \subseteq V_3'$. Again, since the vertices from $V_3'$ are pairwise at distance $4a$, $a \geq 1$, we derive that $v_2$ has exactly one neighbor from $N_{j-1}$. 
Suppose that $v_2u \in E(G)$, $u \in N_j$. 
Again, by Lemma \ref{lemma:vodoravna_povezava}, $v_2$ belongs to a subgraph of $G$ isomorphic to $K_3$, a contradiction to the definition of $V_2'$. 
Since deg$(v_2)=2$, $v_2z \in E(G)$ for some $z \in N_{j+1}$. Note that $z \in V_1' \cup V_7'$. The fact that deg$(v_2)=2$ implies that $z \in V_1'$. 
In conclusion, $v_2$ has one neighbor from $V_1'$ and the other from $V_3'$.

Further, let $v_1 \in V_1'$ ($v_1 \in N_j$, $j  \equiv 0\pmod{4}$) be an arbitrary vertex. 
We need to prove that $N(v_1)$ can contain only leaves and the vertices from $V_2'$. 
First, consider the case when $v_1 \neq y$. 
Since $v_1 \in N_j$, $j  \equiv 0\pmod{4}$, we have:  $N(v_1) \subseteq V_0' \cup V_1' \cup V_2' \cup V_4' \cup  V_5' \cup  V_6' \cup  V_7'$. 
Clearly, $v_1$ does not have a neighbor from $V_1'$ since $d(v_1,v_1')=4k_1, k_1 \geq 1$, for each $v_1' \in V_1'$, $v_1' \neq v_1$. 
If there exists $v_7 \in V_7'$ such that $v_1v_7 \in E(G)$, then deg$(v_7) \geq 2$ ($v_7$ has a neighbor $v_1$ and a neighbor from $N_{j-1}$), a contradiction to $v_7$ being a leaf (we have already argued that each vertex from $V_7'$ is a leaf). Thus, $N(v_1)$ does not contain the vertices from $V_7'$. 
Next, suppose that $v_1v_6 \in E(G)$ for some $v_6 \in V_6'$. By the definition of $V_1'$, $v_1$ is adjacent to $u' \in N_{j-1}$ such that deg$(u')=2$, which implies that $u' \notin V_6'$. Therefore, $v_1$ has at least two neighbors ($u'$ and $v_6$). On the other hand, since $v_6 \in A$, Property 3 implies that $v_1$ is a leaf, a contradiction. Hence, $N(v_1)$ does not contain the vertices from $V_6'$. 
Clearly, $v_1$ is not adjacent to a vertex from $V_4'$ since each vertex from $V_4'$ (from $N_{j-1}$) is a leaf and hence does not have a neighbor from $N_j$. 
Further, recall that each subgraph of $G$ isomorphic to $K_3$ has one vertex from $N_{j_1}, j_1  \equiv 2\pmod{4}$ and the other two from $N_{j_2}$, $j_2  \equiv 3\pmod{4}$. Next, each vertex from $V_5'$ belongs to a triangle and has degree $2$. These facts imply that $v_1$ does not have a neighbor from $V_5'$. Thus, $N(v_1) \subseteq V_0' \cup V_2'$. Obviously, $N(y)$ also contains only the vertices from $V_0' \cup V_2'$. Since each vertex from $V_0'$ is a leaf, our claim holds. 

It remains to prove that to each vertex $v_3 \in V_3'$ ($v_3 \in N_j$, $j  \equiv 2\pmod{4}$) only a single $T$-add, leaves and the vertices from $V_2'$ can be attached. 
Since $v_3 \in N_j$, $j  \equiv 2\pmod{4}$, we have: $N(v_3) \subseteq V_0' \cup V_2' \cup V_3' \cup V_4' \cup V_5' \cup V_6'$. 
%
%
Clearly, $v_3$ does not have a neighbor from $V_3'$ since $d(v_3,v_3')=4k_3, k_3 \geq 1$, for each $v_3' \in V_3'$, $v_3' \neq v_3$. 
Note that $v_3$ is not adjacent to a vertex from $V_0'$, since each vertex from $V_0'$ (from $N_{j-1}$) is a leaf and hence does not have a neighbor from $N_j$. 
Next, suppose that $v_5 \in V_5'$ is adjacent to $v_3$. Note that $v_5$ belongs to a triangle and has degree $2$. 
This implies that $v_3$ also belongs to this triangle (with vertices $v_5, v_3, b;~ b \in V_5' \cup V_6'$) and thus, $v_3 \in B$. As we have argued above, the vertices from $(N[v_5] \cup N[b]) \setminus \{v_3\}$ induce a $T$-add attached to $v_3$. Now, we prove that only one $T$-add can be attached to $v_3$.
Suppose that there exist $x_1, x_2 \in N_{j+1} \setminus \{v_5,b\}$ such that $\{v_3, x_1, x_2\}$ also induce a $K_3$. Then, $G$ contains a subgraph isomorphic to $H_2$, a contradiction to $G$ being $4$-$\pch$-vertex-critical. 
Next, suppose that  there exists $x_1 \in (N_{j+1} \cap N(v_3)) \setminus \{b\}$ such that $\deg(x_1) \geq 3$ ($x_1 \in V_6'$). Then, $G$ contains a subgraph isomorphic to a graph from $\{H_2\} \cup \mathcal{F}_2$ and Lemma \ref{lemma:podgrafi_4-kriticnost} contradicts that $G$ is a $4$-$\pch$-vertex-critical graph. 
Therefore, if $v_3$ is adjacent to a vertex from $V_5'$, only one $T$-add can be attached to $v_3$, and perhaps some leaves and the vertices from $V_2'$ (note that each vertex from $V_5' \cup V_6'$ belongs to a $T$-add). 
Thus, we can now assume that $N(v_3) \cap V_5' = \emptyset$. Suppose that $v_3v_6 \in E(G)$ for some $v_6 \in V_6'$. Note that $v_3$ does not belong to a subgraph of $G$ isomorphic to $K_3$, because $N(v_3) \cap V_5' = \emptyset$ and $G$ does not contains a subgraph isomorphic to $H_1$ or $H_3$. 
Therefore, since $v_6 \in A$, the Properties $1,2,3$ imply that all vertices from $N(v_6) \setminus \{v_3\}$ are leaves. This means that $N[v_6] \setminus \{v_3\}$ induces a $T$-add attached to $v_3$. Now, suppose that two $T$-adds can be attached to $v_3$. Since $N(v_3) \cap V_5' = \emptyset$, $v_3$ is adjacent to two vertices from $V_6'$. Since each vertex from $V_6'$ belongs to $A$ and $v_3$ does not belong to any triangle, the Properties 1,2,3 imply that $G$ contains a subgraph isomorphic to $H_9$, a contradiction to our assumption.
Therefore, also in the case when $N(v_3) \cap V_5' = \emptyset$ and $v_3$ has a neighbor from $V_6'$ we derive that only one $T$-add can be attached to $v_3$ (and perhaps some leaves and the vertices from $V_2'$).
If $N(v_3) \cap (V_5' \cup V_6') = \emptyset$, then only the vertices from $V_2' \cup V_4'$ can be adjacent to $v_3$. In this case, our claim clearly holds. 

Thus, $G \in \mathcal{G}_3$, a contradiction to $G$ being $4$-$\pch$-vertex-critical. 


\textbf{Case 2.} $G$ does not contain a subgraph isomorphic to $T$, which is shown in Fig.~\ref{fig:T}.

First, we consider the case when $G$ contains a subgraph isomorphic to $C_n$, $n \geq 3$. Lemma \ref{lemma:podgrafi_4-kriticnost} implies that $n=3$ or $n=4a, a \geq 1$. Suppose that $n=4a, a \geq 1$. Since $\pch(G)=4$, $G \not \cong C_n$. Thus, if $a \geq 2$, $G$ contains a subgraph isomorphic to $T$ or a subgraph isomorphic to a graph from $\mathcal{C}$, a contradiction to our assumptions. Hence, $a=1$. 
Let $V(C_4)=\{a,b,c,d\}$ and $\{ab,bc,cd,da\} \subseteq E(G)$. Note that $G$ is not isomorphic to $K_4$ and it does not contain a subgraph isomorphic to $K_4$. Hence, without loss of generality we may assume that $bd \notin E(G)$. 
Since $\pch(G)=4$, $G$ is not isomorphic to $H_1$ and $G$ does not contain a subgraph isomorphic to $H_1$, without loss of generality we may also assume that there exists a vertex from $V(G) \setminus \{b,c,d\}$ adjacent to $a$.
This implies that $\deg(b)=\deg(d)=2$, because $G$ does not contain a subgraph isomorphic to a graph from $\{H_5\} \cup \mathcal{C}$. 
If each vertex adjacent to $a$ is adjacent to $c$ and each vertex adjacent to $c$ is adjacent to $a$, then $\pch(G) \leq 3$ (recall that $G$ does not contain a subgraph isomorphic to $H_1, H_5$ or $K_4$), a contradiction to our assumption. 
Thus, there exists a vertex adjacent to $a$, which is not adjacent to $c$ (or a vertex adjacent to $c$ that is not adjacent to $a$). 
Next, note that if all vertices adjacent to $a$, which are not adjacent to $c$, are leaves, and also all vertices from $N(c)\setminus N[a]$ are leaves, then $\pch(G)=3$, again a contradiction. Therefore, there exist $a', a'' \in V(G)$, such that $aa',a'a'' \in E(G)$ and $a'c \notin E(G)$ (note that $a'' \neq b,c,d$). But then, $G$ contains a subgraph isomorphic to $T$, a contradiction. 
These findings imply that $n \neq 4$ (therefore, $G$ does not contain a subgraph isomorphic to $C_n$, $n \geq 4$), which means that $n=3$. 
Let $V(C_3)=\{a,b,c\}$ and let $E(C_3)=\{ab,bc,ca\}$. 
Since $\pch(G)=4$, at least one vertex from $\{a,b,c\}$ has degree at least $3$. 
Note that at least one vertex from $\{a,b,c\}$ has degree $2$. Namely, otherwise, $G$ contains a subgraph isomorphic to $H_3$ or to $C_4$, a contradiciton to our assumption. Therefore, without loss of generality we may assume that deg$(c)=2$.
Now, suppose that deg$(a)$, deg$(b) \geq 3$. This implies that there exist $a',b' \in V(G)$ such that $aa',bb' \in E(G)$. Note that $a' \neq b'$ ($a'b, b'a \notin E(G)$) since $G$ does not contain a subgraph isomorphic to $C_4$. 
Moreover, each vertex from $(N(a) \cup N(b)) \setminus \{a,b,c\}$ has exactly one neighbor in $N(a) \cup N(b) \cup \{a,b,c\}$, because $G$ does not contain a subgraph isomorphic to $H_2$ or $C_4$. 
Hence, since $\pch(G)=4$, there exists $a'' \in V(G) \setminus (N(a) \cup N(b) \cup \{a,b,c\}) $ (or $b'' \in V(G) \setminus (N(a) \cup N(b) \cup \{a,b,c\}$) such that $a'a'' \in E(G)$ (or $b'b'' \in E(G)$). Then, $G$ contains a subgraph isomorphic to $T$, a contradiction.
Therefore, deg$(a)\geq 3$ and deg$(b)=2$. 
Let $N_0=\{a\}$ and let $N_i=\{u;~d(u,a)=i\}$ for each $i=1,2, \ldots, k$. 
Note that $k \geq 2$. Namely, $\pch(G)=4$ and $G$ does not contain a subgraph isomorphic to $H_2$, which imply that there exists $a' \in V(G)$ such that $d(a,a')=2$. 
Further, the fact that $G$ does not contain a subgraph isomorphic to $T$ implies that all vertices from $N_1 \cup N_2 \cup \ldots \cup N_{k-2}$ have degree at most $2$ and in addition, deg$(a)=3$.
Now, let deg$(u_k) \geq 2$ for some $u_k \in N_k$. This implies that there exist $u_{k-1} \in N_{k-1}$ and $v \in (N_k \cup N_{k-1})$ such that $\{u_k, u_{k-1}, v\}$ induces a triangle. 
The fact that $G$ does not contain a subgraph isomorphic to $T$ implies that deg$(u_{k-1})=3$, deg$(v)=2$, deg$(u_k)=2$.
Since $G$ does not contain a subgraph isomorphic to a graph from $\mathcal{F}_1$, $k-2=4m+3$, $m \geq 0$. But then, there exists a $3$-packing coloring $c'$ of $G$. 
Namely, let $c'(c)=c'(v)=1$, $c'(b)=c'(u_k)=2$, $c'(a)=c'(u_{k-1})=3$. Further, if $m \geq 1$, then let the vertices from a subgraph of $G$ isomorphic to $P_{k-2}$ be colored one after another using the following pattern of colors: $1,2,1,3, \ldots, 1,2,1$. Otherwise, these vertices color with the following color pattern: $1,2,1$. Clearly, $c'$ is a $3$-packing coloring of $G$, a contradiction to $G$ being $4$-$\pch$-vertex-critical.
Hence, deg$(u_k)=1$ for every $u_k \in N_k$. Since $G$ does not contain a subgraph isomorphic to $T$, 
all vertices from $N(u_{k-1})$ except one, are leaves. 
%
If deg$(u_{k-1})=2$, then Lemma \ref{lema:trikotnik+pot=3barve} implies that $G \in \mathcal{G}_3$, a contradiction to $G$ being $4$-$\pch$-vertex-critical. Therefore, deg$(u_{k-1}) \geq 3$. 
Since $G$ does not contain a subgraph isomorphic to a graph from $\mathcal{F}_2$, we derive that $k-2 \neq 0$ and $k-2 \neq 4l+2$ for any $l \geq 0$. In this case, there exists a $3$-packing coloring $c''$ of $G$, defined as follows. Let $c''(c)=1, c''(b)=2$, $c''(a)=3$. 
If $k-2=4l+1$, $l \geq 0$, or $k-2=4l', l' \geq 1$, then let $c''(u_{k-1})=2$ and the leaves from $N(u_{k-1})$ receive color $1$. If $l\geq 1$, then the other vertices color one after another using the following pattern of colors $1,2,1,3, \ldots$. The same pattern is used for any $l' \geq 1$. If $l=0$, then color the unclored vertex by $1$. 
If $k-2=4l''+3$, $l'' \geq 0$, then let $c''(u_{k-1})=3$ and the leaves from $N(u_{k-1})$ receive color $1$. Further, if $l''\geq 1$, then color the other vertices one after another using the following pattern of colors $1,2,1,3, \ldots$. Otherwise, color the remaining three vertices one after another by $1,2,1$. 
Clearly, in each case, the obtained coloring is a $3$-packing coloring of $G$, a contradiction to $G$ being $4$-$\pch$-vertex-critical. This implies that $G$ does not contain a subgraph isomorphic to $K_3$. Then, $G$ is a tree.
Clearly, if every vertex has degree at most 2, then $G$ is a path and thus, it is not $4$-$\pch$-vertex-critical, a contradiction to our assumption. Hence, there exists $y \in V(G)$ such that deg$(y) \geq 3$. Let $N_0=\{y\}$ and let $N_i=\{u;~d(u,y)=i\}$ for each $i=1,2, \ldots, k$.
Recall that $\pch(G)=4$, $G$ is a tree and $G$ does not contain a subgraph isomorphic to $T$. Thus, $k \geq 3$. Moreover, all vertices from $N_1 \cup N_2 \cup \ldots N_{k-2}$ have degree at most $2$, all vertices from $N(y)$, except one, are leaves, and $|N_{i}|=1$ for any $i \in \{2,3, \ldots, k-1\}$.
Further, deg$(u_k)=1$ for every $u_k \in N_k$. Therefore, $G$ contains at most two vertices of degree at least $3$: one is $y$ and let the other be $y'$ ($y' \in N_{k-1}$).
Next, the fact that $G$ is a tree implies that all vertices from $N(y') \setminus N_{k-2}$ are leaves. Now, we prove that $G$ is $3$-packing colorable. Let $c: V(G) \longrightarrow \{1,2,3\}$. 
First, suppose that $k-2=4l$, $l \geq 1$, or $k-2=4l'+1, l' \geq 0$. In this case, let $c(y)=3$, $c(y')=2$ and all leaves from $N(y) \cup N(y')$ receive color $1$. The remaining vertices (of $P_{k-2}$) color one after another using the following pattern of colors: $1,2,1,3$ (note that if $l'=0$, then the vertex receives color $1$). Clearly, $c$ is a $3$-packing coloring of $G$, thus $\pch(G) \leq 3$, a contradiction to $G$ being $4$-$\pch$-vertex-critical. 
Next, let $k-2=4l''+2$, $l'' \geq 0$. Then, let $c(y)=c(y')=2$ and let all leaves from $N(y) \cup N(y')$ receive color $1$. The remaining vertices (of $P_{k-2}$) color one after another using the pattern of colors: $1,3,1,2$ (if $l''=0$, then use only colors $1,3$). The described coloring $c$ is a $3$-packing coloring of $G$, a contradiction to $G$ being $4$-$\pch$-vertex-critical.
Finally, suppose that $k-2=4l'''+3$, $l''' \geq 0$. In this case, let $c(y)=c(y')=3$ and all leaves from $N(y) \cup N(y')$ color with color $1$. Next, the remaining vertices (of $P_{k-2}$) color one after another using the pattern of colors: $1,2,1,3$ (if $l'''=0$, then use only colors $1,2,1$). Again, this is a $3$-packing coloring of $G$, a contradiction to $G$ being $4$-$\pch$-vertex-critical. This concludes the proof. 
\qed
\end{proof}


\section{$4$-$\pch$-critical graphs}
\label{kriticni_zadnje}

Recall that a graph $G$ is a $\chi_\rho$-critical graph, if for every proper subgraph $H$ of $G$, $\pch(H)<\pch(G)$. If $G$ is $\chi_\rho$-critical and $\chi_\rho(G)=k$, then we say that $G$ is $k$-$\chi_\rho$-critical. In this section, we characterize $4$-$\pch$-critical graphs. 

Bre\v sar and Ferme~\cite{bf-2019} proved that every $\pch$-critical graph is also $\chi_\rho$-vertex-critical. In addition, in trees these two types of critical graphs coincide. The mentioned authors also provided two partial characterizations of $4$-$\pch$-critical graphs, which are given below. 

\begin{proposition}~\cite{bf-2019} 
If $G$ is a graph containing a cycle $C_n$, where $n\ge 5$ is an integer not divisible by $4$, then $G$ is a $4$-$\chi_\rho$-critical graph if and only if $G$ is isomorphic to $C_n$.
\label{kriticni_BF_cikli}
\end{proposition}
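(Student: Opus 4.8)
```latex
\textbf{Proof proposal for Proposition~\ref{kriticni_BF_cikli}.}

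The plan is to prove the two implications separately. For the easier direction, suppose $G \cong C_n$ with $n \ge 5$ not divisible by $4$. Then $\pch(C_n) = 4$ as recalled in Section~2, and for every proper subgraph $H$ of $C_n$ we have that $H$ is either a disjoint union of paths (if an edge is removed) or a smaller cycle together with isolated vertices or paths (there is no proper subgraph that is a cycle on the full vertex set). In every case $\pch(H) \le 3$, since paths have packing chromatic number at most $3$ and any cycle $C_m$ appearing as a proper subgraph has $m < n$; but actually it is cleanest to note that a proper subgraph of $C_n$ on $n$ vertices must be obtained by deleting at least one edge, hence is a union of paths, while a proper subgraph on fewer vertices is a union of paths. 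So $\pch(H) \le 3 < 4 = \pch(G)$, and $C_n$ is $4$-$\chi_\rho$-critical.

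For the forward direction, let $G$ be a $4$-$\chi_\rho$-critical graph containing a cycle $C_n$ with $n \ge 5$, $n \not\equiv 0 \pmod 4$, and suppose for contradiction that $G \not\cong C_n$. The key observation is that every $\chi_\rho$-critical graph is $\chi_\rho$-vertex-critical (stated in the text, from~\cite{bf-2019}), so $G$ is a $4$-$\chi_\rho$-vertex-critical graph containing such a cycle; by Theorem~\ref{thm:kriticni_CIKLI_rall_klavzar_izrek} this forces $G \in \mathcal{C}_5 \cup \mathcal{C}_6 \cup \mathcal{C}$. If $G \in \mathcal{C}$ then $G \cong C_n$, contradicting our assumption. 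So $G$ must lie in $\mathcal{C}_5 \cup \mathcal{C}_6$. The remaining task is to rule these out as $\chi_\rho$-\emph{critical} graphs: each graph in $\mathcal{C}_5 \cup \mathcal{C}_6$ is obtained from $C_5$ or $C_6$ by adding one or two chords, hence properly contains the cycle $C_5$ or $C_6$ (or $C_5$ again after chord deletion) as a non-spanning—wait, as a spanning but edge-proper—subgraph with packing chromatic number $4$. Concretely, any $G \in \mathcal{C}_5$ has $C_5$ as a proper subgraph (delete the added chord(s)), and $\pch(C_5) = 4 = \pch(G)$, so $G$ violates the defining condition of a $\chi_\rho$-critical graph. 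Similarly every $G \in \mathcal{C}_6$ contains $C_6$ or a $C_5$ as a proper subgraph with packing chromatic number $4$ (note $\pch(C_6) = 4$ since $6 \not\equiv 0 \pmod 4$). In all cases we contradict $4$-$\chi_\rho$-criticality of $G$, so $G \cong C_n$.

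The main obstacle is purely bookkeeping: one must check that each of the (finitely many) graphs in $\mathcal{C}_5$ and $\mathcal{C}_6$ genuinely has a proper subgraph of packing chromatic number exactly $4$ — it is not enough that they contain \emph{some} subgraph of packing chromatic number $4$ if that subgraph happens to be the whole graph. Here the point is that these graphs all have added chords, so deleting a chord yields a proper (spanning, edge-deficient) subgraph, and that subgraph still contains a cycle $C_m$ with $m \in \{5,6\}$, hence has packing chromatic number $4$. Thus the criticality condition $\pch(H) < \pch(G)$ fails, completing the contradiction. No deep argument is needed beyond invoking Theorem~\ref{thm:kriticni_CIKLI_rall_klavzar_izrek} and the fact that $\chi_\rho$-criticality implies $\chi_\rho$-vertex-criticality.
```
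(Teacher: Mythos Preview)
The paper does not give its own proof of this proposition: it is quoted from~\cite{bf-2019} and used as a black box (in the proof of Theorem~\ref{glavni_izrek_2}, precisely to exclude the graphs in $\mathcal{C}_5\cup\mathcal{C}_6$). So there is no ``paper's proof'' to compare against.

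Your argument is correct and is in fact a tidy way to recover the result from the ingredients the present paper already makes available. The backward direction is fine once you settle on the clean formulation you reach at the end: every proper subgraph of $C_n$ is a disjoint union of paths, hence has packing chromatic number at most~$3$. For the forward direction, invoking the implication ``$\chi_\rho$-critical $\Rightarrow$ $\chi_\rho$-vertex-critical'' together with Theorem~\ref{thm:kriticni_CIKLI_rall_klavzar_izrek} immediately narrows $G$ down to $\mathcal{C}_5\cup\mathcal{C}_6\cup\mathcal{C}$, and your chord-deletion argument dispatches $\mathcal{C}_5\cup\mathcal{C}_6$: removing a chord gives a proper spanning subgraph that still contains $C_5$ or $C_6$, hence still has packing chromatic number $4$. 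No gap here.

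Two cosmetic remarks. First, the exposition would be cleaner without the mid-sentence self-corrections (``non-spanning---wait, \ldots''); just state directly that deleting a chord yields a proper \emph{spanning} subgraph containing $C_5$ (respectively $C_6$). Second, you should be aware that this route is not the original argument of~\cite{bf-2019}: you are leaning on Theorem~\ref{thm:kriticni_CIKLI_rall_klavzar_izrek} from~\cite{kr-2019}, whereas the proposition can also be proved directly by observing that if $G$ properly contains such a $C_n$ then deleting any edge or vertex of $G$ outside a fixed copy of $C_n$ already leaves a subgraph with $\chi_\rho\ge 4$. Your approach is shorter given the tools of this paper; the direct approach is self-contained.
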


Recall that $\mathcal{D}$ is the class of graphs that contain exactly one cycle and have an arbitrary number of leaves  attached to each of the vertices of the cycle.

\begin{theorem}~\cite{bf-2019}
A graph $G \in \mathcal{D}$ is a $4$-$\chi_\rho$-critical graph if and only if $G$ is one of the following graphs:
\begin{itemize}
\item $G \cong C_n$, $n \geq 5$, $n \not\equiv 0\pmod{4}$;
\item $G$ is the net graph;
\item $G$ is obtained by attaching a single leaf to two adjacent vertices of $C_4$.
\end{itemize}
\label{kriticni_BF_znani}
\end{theorem}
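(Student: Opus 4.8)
The plan is to reduce the statement to the already-available classification of $4$-$\chi_\rho$-vertex-critical graphs inside $\mathcal{D}$. Recall (as noted just above) that every $\chi_\rho$-critical graph is $\chi_\rho$-vertex-critical. Hence, if $G\in\mathcal{D}$ is $4$-$\chi_\rho$-critical, then $G$ is $4$-$\chi_\rho$-vertex-critical, and Theorem~\ref{thm:kriticni_rall_klavzar_izrek} forces $G$ to be one of exactly four kinds of graphs: a member of $\mathcal{C}$, the net graph, $C_4$ with a single leaf attached to two adjacent vertices, or $C_8$ with a single leaf attached to two vertices at distance $3$. So it suffices to prove that each of the first three kinds is $\chi_\rho$-critical, while the fourth is not. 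For all of them $\chi_\rho(G)=4$ is already known (from the formula for $\chi_\rho(C_n)$ and from Theorem~\ref{izrek_4kriticni_grafi}, since the net graph is $H_3$ and $C_4$ with a leaf on two adjacent vertices is $H_5$), and since these graphs are $\chi_\rho$-vertex-critical, every subgraph obtained by deleting one vertex already has packing chromatic number at most $3$; thus for the ``critical'' direction only edge-deletions need to be examined.

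For the positive direction, if $G\cong C_n\in\mathcal{C}$, then every proper subgraph is contained in $P_n$ or $P_{n-1}$ and $\chi_\rho(P_m)\le 3$ for all $m$, so $C_n$ is $4$-$\chi_\rho$-critical. If $G$ is the net graph $H_3$, then deleting a triangle edge gives a $6$-vertex caterpillar and deleting a pendant edge gives the disjoint union of an isolated vertex with $K_3$ carrying two pendant vertices; in both cases an explicit $3$-packing coloring is immediate. If $G=H_5$ is $C_4$ with a leaf on two adjacent vertices, then deleting a pendant edge gives $K_1\cup Y_1$, whose packing chromatic number is $3$ by Lemma~\ref{lema:c4+pot=3barve}, and deleting a cycle edge gives a small caterpillar that is again directly $3$-colored. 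Up to symmetry these exhaust all edges, so the net graph and $H_5$ are $4$-$\chi_\rho$-critical.

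For the negative direction, write $G$ for the graph obtained from $C_8$ with vertices $x_1,\dots,x_8$ (in cyclic order) by attaching a leaf to $x_1$ and a leaf to $x_4$; note that $G$ is precisely the graph $H_7$ of Fig.~\ref{h7}. I claim that the proper subgraph $G-x_6x_7$ — obtained by deleting the central edge of the longer $x_1$--$x_4$ arc $x_1x_8x_7x_6x_5x_4$ — still satisfies $\chi_\rho(G-x_6x_7)=4$; this exhibits a proper subgraph with the same packing chromatic number as $G$, so $G$ is not $\chi_\rho$-critical. Since $G-x_6x_7$ is a subgraph of $G$ we have $\chi_\rho(G-x_6x_7)\le 4$, so the real content is the lower bound. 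The graph $G-x_6x_7$ is a caterpillar with spine $x_7x_8x_1x_2x_3x_4x_5x_6$ and legs at $x_1$ and $x_4$; assuming a $3$-packing coloring existed, I would split into cases according to the (unordered) pair of colors given to the two legs, and in each case propagate the packing constraints along the spine — using that color $3$ can occupy at most two of the spine vertices, that color $2$ can occupy at most three, and that $\chi_\rho(P_m)=3$ for $m\ge 4$ — until a contradiction appears. The obstruction always localizes either at $x_4$, which already carries a leaf, or at the two ends of the spine.

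The main obstacle is exactly this last lower-bound argument: it is entirely elementary but needs a careful, slightly tedious case distinction, and one must be careful \emph{not} to appeal to any nonexistent general principle such as ``every caterpillar is $3$-colorable'' — indeed $G-x_6x_7$ is itself a caterpillar of packing chromatic number $4$. (It is worth noting that the $8$-vertex caterpillar obtained by additionally deleting the two spine endpoints does have packing chromatic number $3$, so the whole $10$-vertex graph $G-x_6x_7$ is genuinely required.) A shorter alternative, if one wishes to avoid the casework, would be to identify $G-x_6x_7$ with a known $4$-$\chi_\rho$-vertex-critical tree from the study of $\chi_\rho$-vertex-critical trees in~\cite{kr-2019}.
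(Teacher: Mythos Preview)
The paper does not give its own proof of this statement: Theorem~\ref{kriticni_BF_znani} is quoted from~\cite{bf-2019} and used as a black box in the proof of Theorem~\ref{glavni_izrek_2}. So there is no in-paper argument to compare against, and your task is really to produce a self-contained proof.

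Your strategy is sound and essentially complete. Reducing via ``$\chi_\rho$-critical $\Rightarrow$ $\chi_\rho$-vertex-critical'' to the four candidates of Theorem~\ref{thm:kriticni_rall_klavzar_izrek} is exactly right, and your verification that $C_n\in\mathcal{C}$, $H_3$ and $H_5$ are $4$-$\chi_\rho$-critical is correct (for $H_5$ you should note there are, up to symmetry, three types of cycle edges --- $ad$, $ab\sim cd$, and $bc$ --- not just one, but all three deletions are easily $3$-colored as you indicate). The identification of the $C_8$ graph with $H_7$ is also correct.

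For the negative direction, your proposed case analysis on $H_7-x_6x_7$ would work but is unnecessary. In the labeling of Fig.~\ref{h7} your deleted edge is $hi$, and the resulting caterpillar has spine $i{-}j{-}b{-}c{-}d{-}e{-}g{-}h$ with pendant leaves $a$ at $b$ and $f$ at $e$; this is \emph{exactly} the tree in $\mathcal{F}_5$ with $A\cong B\cong P_4$ and $l=2$ (take $u=b$, $v=e$). Theorem~\ref{izrek:druzine_so_4kriticne} then gives $\chi_\rho(H_7-hi)=4$ immediately, which is the clean realization of the ``shorter alternative'' you allude to at the end. With that substitution your argument becomes a full proof with no casework left undone.
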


Let $\mathcal{F}_1'$ be the subfamily of graphs from $\mathcal{F}_1$ for which $l \notin\{0, 4k+2; ~k \geq 0\}$ (see Fig.~\ref{Druzina_F1}). Next, we denote by $\mathcal{F}_3'$ the subfamily of graphs from $\mathcal{F}_3$, which do not contain the edge $u_2u_3$ (see Fig.~\ref{Druzina_F3}). Further, $\mathcal{F}_4'$ consists of all graphs from $\mathcal{F}_4$, which do not contain the edges $v_1y_{l-1}$ and $v_1w_2$ (see Fig.~\ref{druzina_F4}). Finally, let $\mathcal{F}_5'$ be the subfamily of graphs from $\mathcal{F}_5$, which do not contain the edges $u_2u_3$ and $v_2v_3$ (see Fig.~\ref{druzina_F5}). Note that each graph from $\mathcal{F}_4' \cup \mathcal{F}_5'$ is a tree.

\begin{theorem}
\label{glavni_izrek_2}
A graph $G$ is $4$-$\pch$-critical if and only if $G \in \mathcal{F}_1' \cup \mathcal{F}_2 \cup \mathcal{F}_3' \cup \mathcal{F}_4' \cup \mathcal{F}_5' \cup \mathcal{C} \cup \{K_4, H_1, H_2, H_3, H_4, H_5, H_9\}$. 
\end{theorem}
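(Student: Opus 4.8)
The plan is to leverage Theorem~\ref{glavni_izrek} (the complete characterization of $4$-$\pch$-vertex-critical graphs) together with the fact, cited above from~\cite{bf-2019}, that every $\pch$-critical graph is $\pch$-vertex-critical. Since $4$-$\pch$-critical implies $4$-$\pch$-vertex-critical, every $4$-$\pch$-critical graph lies in the list $\mathcal{F}_1 \cup \mathcal{F}_2 \cup \mathcal{F}_3 \cup \mathcal{F}_4 \cup \mathcal{F}_5 \cup \mathcal{C}_5 \cup \mathcal{C}_6 \cup \mathcal{C} \cup \{K_4, H_1, \ldots, H_9\}$. So the whole proof reduces to going through this finite list of families and, for each member $G$, deciding whether additionally $\pch(H) < 4$ holds for \emph{every} proper subgraph $H$ (not merely for the vertex-deleted subgraphs $G-x$). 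The difference between the two notions is precisely the edge-deleted subgraphs $G-e$: a $\pch$-vertex-critical graph $G$ is $\pch$-critical iff $\pch(G-e) \le 3$ for every edge $e$ of $G$ (together with $\pch(G-x)\le 3$, which we already have). Thus the task is: for each candidate $G$, check all edges $e$ and compute $\pch(G-e)$.

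First I would dispose of the easy families. For $\mathcal{C}$: Proposition~\ref{kriticni_BF_cikli} already gives that among graphs containing $C_n$ ($n\ge 5$, $4\nmid n$) the only $4$-$\pch$-critical ones are the cycles themselves, and a cycle minus an edge is a path, which is $2$- or $3$-colorable; so $\mathcal{C}\subseteq$ the answer. For $\mathcal{C}_5$ and $\mathcal{C}_6$: each such graph properly contains a $C_5$ (resp.\ a $C_5$ or $C_7$ — an odd cycle not divisible by $4$) as a proper subgraph, which has $\pch = 4$, so none of them is $4$-$\pch$-critical; this is why $\mathcal{C}_5,\mathcal{C}_6$ drop out. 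For $K_4$: deleting an edge gives $K_4-e$, which is easily $3$-colorable, and every vertex-deleted subgraph is $K_3$ with $\pch=3$; so $K_4$ stays. For each $H_i$ in turn I would check the edge-deletions: $H_1,H_2,H_3,H_4,H_5,H_9$ should survive (each $H_i - e$ becomes a disjoint union of paths/$C_4$'s/triangles-with-tails, hence $\pch\le 3$ by Lemmas~\ref{lema:trikotnik+pot=3barve} and~\ref{lema:c4+pot=3barve}), while $H_6,H_7,H_8$ should fail because each of them contains a proper subgraph with $\pch=4$ — e.g.\ $H_7$ and $H_8$ contain a proper cycle from $\mathcal{C}$ or $\mathcal{F}_5$, and $H_6$ contains a proper subgraph isomorphic to a member of $\mathcal{C}$ (an odd cycle) after an edge deletion check; this explains why $\{H_6,H_7,H_8\}$ are omitted in the statement.

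For the five infinite families I would argue family by family, in each case identifying which edges $e$ keep $\pch(G-e)\le 3$ and which do not, then intersecting with the defining parameter constraints. For $\mathcal{F}_1$: the $K_3$--$K_3$ chain with a path of length $l\in\{4k,4k+1,4k+2\}$ subdivisions. Deleting an edge of one of the triangles leaves an $X_n$-type graph, which is $3$-colorable; deleting a path edge splits $G$ into two components each isomorphic to (a subgraph of) some $X_m$, again $3$-colorable — \emph{unless} one of the two halves, together with its triangle, still forces $\pch=4$, which happens exactly when a piece is itself in $\mathcal{F}_2$ or $\mathcal{F}_3$. A short length-modulo analysis should show $G-e$ can have $\pch=4$ precisely when $l\in\{0,4k+2\}$ (then some $G-e$ is a graph of $\mathcal{F}_2$ type with $\pch=4$), and is always $\le 3$ when $l=4k$ or $4k+1$; hence the restriction to $\mathcal{F}_1'$. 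For $\mathcal{F}_2$: I expect all edge-deletions to drop $\pch$ to $3$ (every piece becomes an $X$- or $Y$-type graph or a path), so all of $\mathcal{F}_2$ survives. For $\mathcal{F}_3$: the graph carries the dashed edge $u_2u_3$ ($A\in\{P_4,C_4\}$); if that edge is present then deleting it yields still a $4$-$\pch$ graph (of $\mathcal{F}_3$ type with $A=P_4$), so only the $A=P_4$ version without $u_2u_3$ — i.e.\ $\mathcal{F}_3'$ — is $4$-$\pch$-critical. For $\mathcal{F}_4$: similarly, the optional edge from $\{v_1y_{l-1},v_1w_2\}$ must be absent (otherwise deleting it leaves a $4$-$\pch$ graph still in $\mathcal{F}_4$), giving $\mathcal{F}_4'$; all other edge-deletions yield forests that are $3$-colorable by the tree-coloring patterns used in the proof of Theorem~\ref{izrek:druzine_so_4kriticne}. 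For $\mathcal{F}_5$: analogously the optional edges $u_2u_3$, $v_2v_3$ must be absent, giving $\mathcal{F}_5'$, which is a tree.

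The main obstacle, and the bulk of the work, will be the case analysis for the infinite families: for each $G$ and each of its (up to isomorphism) edges $e$, one must either exhibit an explicit $3$-packing coloring of $G-e$ (the tree-type and $X_n$/$Y_n$-type pieces are handled by Lemmas~\ref{lema:trikotnik+pot=3barve},~\ref{lema:c4+pot=3barve} and the periodic color patterns $1,2,1,3$ and $1,3,1,2$), or identify inside $G-e$ a proper subgraph forcing $\pch=4$ (a member of $\mathcal{C}$, or a member of one of the five families, invoking Theorems~\ref{thm:kriticni_CIKLI_rall_klavzar_izrek},~\ref{izrek_4kriticni_grafi},~\ref{izrek:druzine_so_4kriticne}). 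The delicate point is that when a subdivided-path edge is deleted, the component structure and the residue of the subdivision length modulo $4$ interact, so one has to be careful to track exactly which length residues force an $\mathcal{F}_2$/$\mathcal{F}_3$-type obstruction in a component and which do not; this is exactly the computation that pins down the primed subfamilies $\mathcal{F}_1',\mathcal{F}_3',\mathcal{F}_4',\mathcal{F}_5'$. Once all cases are checked, the union of the surviving graphs is exactly $\mathcal{F}_1' \cup \mathcal{F}_2 \cup \mathcal{F}_3' \cup \mathcal{F}_4' \cup \mathcal{F}_5' \cup \mathcal{C} \cup \{K_4,H_1,H_2,H_3,H_4,H_5,H_9\}$, and conversely a direct verification (mostly already contained in the vertex-critical proof, plus the handful of new $3$-colorings of edge-deleted graphs) shows each listed graph is indeed $4$-$\pch$-critical, completing the equivalence.
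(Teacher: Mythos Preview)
Your approach is exactly the paper's: restrict to the list from Theorem~\ref{glavni_izrek} via the implication ``$\pch$-critical $\Rightarrow$ $\pch$-vertex-critical'', then test each candidate by checking all edge-deletions $G-e$, using Lemmas~\ref{lema:trikotnik+pot=3barve}--\ref{lema:c4+pot=3barve}, Proposition~\ref{kriticni_BF_cikli}, and Theorem~\ref{kriticni_BF_znani} to dispose of the easy cases.

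Two of your specific diagnostics are off, though the conclusions are right. For $H_6$ there is no odd cycle from $\mathcal{C}$ inside it (all its cycles are $C_4$'s); the actual witness is $H_6-ad\cong H_5$, which has $\pch=4$. For $\mathcal{F}_1$ you have the roles of the edges reversed: deleting any \emph{path} edge $x_ix_{i+1}$ always yields a disjoint union of two $X_m$-graphs, hence $\pch\le 3$ unconditionally; the dangerous edge is the \emph{triangle base} $u_1u_2$ (resp.\ $v_1v_2$), whose removal turns that end into a $P_3$-star and produces a member of $\mathcal{F}_2$ precisely when $l\in\{0,4k+2\}$. With these two corrections your sketch goes through verbatim.
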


\begin{proof}
The fact that each $\pch$-critical graph is also $\chi_\rho$-vertex-critical and Theorem \ref{glavni_izrek} imply that $4$-$\pch$-critical graphs can only be the graphs from $\mathcal{F}_1 \cup \mathcal{F}_2 \cup \mathcal{F}_3 \cup \mathcal{F}_4 \cup \mathcal{F}_5 \cup \mathcal{C}_5 \cup \mathcal{C}_6 \cup \mathcal{C} \cup \{K_4, H_1, H_2, \ldots, H_9\}$.

From Proposition \ref{kriticni_BF_cikli}, we derive that the graphs from $\mathcal{C}_5 \cup \mathcal{C}_6$ are not $4$-$\pch$-critical. By Theorem \ref{kriticni_BF_znani}, the graphs from $\mathcal{C} \cup \{H_3, H_5\}$ are $4$-$\pch$-critical. In addition, Theorem \ref{kriticni_BF_znani} also shows that $H_7$ is not a $4$-$\pch$-critical graph. Consequently, $H_8$ is not $4$-$\pch$-critical, since it contains a proper subgraph isomorphic to $H_7$. Further, since each tree is a $\pch$-critical graph if and only if it is $\pch$-vertex-critical, we know that $H_9$ is $4$-$\pch$-critical. Clearly, $K_4$ is also a $4$-$\pch$-critical graph. 
Therefore, it remains to consider the graphs from $\mathcal{F}_1 \cup \mathcal{F}_2 \cup \mathcal{F}_3 \cup \mathcal{F}_4 \cup \mathcal{F}_5 \cup \{H_1, H_2, H_4, H_6\}$. 
Let $G \in  \mathcal{F}_1 \cup \mathcal{F}_2 \cup \mathcal{F}_3 \cup \mathcal{F}_4 \cup \mathcal{F}_5 \cup \{H_1, H_2, H_4, H_6\}$. By Theorem \ref{glavni_izrek}, $\pch(G)=4$. Hence, we only need to check if $\pch(G-e) \leq 3$ for every edge $e$ of $G$.
Suppose that $e=xy$ and $x$ is a leaf. We observe that $G-e$ is isomorphic to the disjoint union of $G-x$ and $K_1$. Since $G$ is $4$-$\pch$-vertex-critical, $\pch(G-x) \leq 3$, which implies that $\pch(G-e) \leq 3$. Therefore, we only need to consider the edges of $G$, which connect the vertices of degree at least $2$. 

Let $G \cong H_1$ (see Fig.~\ref{Trikotnik_drugi}). If $e \in \{ab, ad, bd\}$, then $G-e \in \{X_2, Y_1\}$. By Lemma \ref{lema:c4+pot=3barve}, $\pch(G-e)=3$. In the case when $e \in \{ae, de\}$, $G-e$ is isomorphic to $H_3-d$. Since $H_3$ is $4$-$\pch$-vertex-critical, we have $\pch(G-e) \leq 3$. Thus, $H_1$ is $4$-$\pch$-critical.
Now, suppose that $G \cong H_2$ (see Fig.~\ref{Trikotnik_tretji}). If $e \in \{ab, be, bc, bd\}$, then $G-e \cong X_2$ and Lemma \ref{lema:c4+pot=3barve} implies that $\pch(G-e)=3$. Otherwise, color the leaves of $G-e$ with color $1$ and the remaining vertices with colors $1,2,3$ such that the vertex of degree $4$ receives color $2$. The described coloring is a $3$-packing coloring of $G-e$, which  implies that $H_2$ is $4$-$\pch$-critical.
Next, let $G \cong H_4$ (see Fig.~\ref{h4}). If $e=cd$, then $G-e$ is a path and hence, it is $3$-packing colorable. Further, if $e \in \{bc, de\}$, then $G-e$ is the disjoint union of $X_2$ and $K_2$. By Lemma \ref{lema:c4+pot=3barve}, $\pch(G-e)=3$. Now, let $e=cg$. Then, a $3$-packing coloring $c'$ of $G-e$ can be formed as follows: $c'(a)=c'(c)=c'(e)=c'(g)=1$, $c'(b)=c'(f)=2$ and $c'(d)=3$. This implies that $\pch(G-e) \leq 3$, if $e=cg$. Analogously we prove that $\pch(G-e) \leq 3$ holds for $e=dg$. Therefore, $H_4$ is $4$-$\pch$-critical.
Further, we claim that $H_6$ (see Fig.~\ref{h6}) is not a  $4$-$\pch$-critical graph. Note that $H_6-ad$ is isomorphic to $H_5$ and $\pch(H_5)=4$. Hence, $\pch(H_6-ad)=4$ and the claim is true.

Now, let $G \in \mathcal{F}_1$ (see Fig.~\ref{Druzina_F1}). If $e \in E(G) \setminus \{u_1u_2, v_1v_2\}$, then $G-e$ is isomorphic to $X_n$, $n \geq 1$, or to the disjoint union of two graphs from $\{K_3, X_n$;~ $n \geq 1\}$. 
Using Lemma \ref{lema:c4+pot=3barve} and the fact that $\pch(K_3)=3$, we derive that $\pch(G-e)=3$. Next, let $e \in \{u_1u_2, v_1v_2\}$. If $l=0$ or $l=4k+2$, $k \geq 0$, then $G-e$ is isomorphic to a graph from $\mathcal{F}_2$. Thus, $\pch(G-e)=4$ and $G$ is not $4$-$\pch$-critical.
If $l=4k, k \geq 1$, then $G-e$ is isomorphic to $H-u_2$, where $H \in \mathcal{F}_3$. By Theorem \ref{izrek:druzine_so_4kriticne}, $\pch(G-e) \leq 3$ and consequently, $G$ is $4$-$\pch$-critical. 
Finally, let $l=4k+1$, $k \geq 0$. In this case, we can form a $3$-packing coloring of $G-e$ as follows. Let the vertices of the triangle receive colors $1,2,3$ such that the vertex of degree $3$ receives a color $3$. Further, color the leaves with color $1$, and the remaining vertices (of $P_{4k+2}$) one after another with the following pattern of colors: $1,2,1,3$. This implies that $G$ is $4$-$\pch$-critical. In conclusion, each graph from $\mathcal{F}_1'$ is $4$-$\pch$-critical.

Further, let $G \in \mathcal{F}_2$ (see Fig.~\ref{Druzina_F2}). If $e \in E(G) \setminus \{u_1u_2\}$, then $G-e$ is isomorphic to $X_n$, to a subgraph of $X_n$ or to the disjoint union of such graphs. Hence, by Lemma \ref{lema:c4+pot=3barve}, $\pch(G-e) \leq 3$. If $e=u_1u_2$, then $G-e$ is isomorphic to $H-\{u_2, v_2\}$, where $H \in \mathcal{F}_5$. Using Theorem \ref{izrek:druzine_so_4kriticne}, we infer that $\pch(G-e) \leq 3$. Hence, every $G \in \mathcal{F}_2$ is $4$-$\pch$-critical.

Now, consider a graph $G \in \mathcal{F}_3$ (see Fig.~\ref{Druzina_F3}). Clearly, if $u_2u_3 \in E(G)$, then $G-u_2u_3 \in \mathcal{F}_3$ and hence, $G$ is not $4$-$\pch$-critical. Thus, we only need to consider the case when $G \in \mathcal{F}_3'$. If $e \in E(G) \setminus \{u_1u_2, v_1v_2\}$, then $G-e$ is isomorphic to (a subgraph of) $X_n$, to a subgraph of $Y_n$ or to the disjoint union of such graphs. Using Lemma \ref{lema:c4+pot=3barve}, we derive that $\pch(G-e) \leq 3$. If $e=v_1v_2$, then $G-e$ is isomorphic to $H-v_2$, where $H \in \mathcal{F}_5$. By Theorem \ref{izrek:druzine_so_4kriticne}, $\pch(G-e) \leq 3$. Note that $\pch(G-u_1u_2) \leq 3$, since $u_2$ is a leaf. Hence, every graph $G \in \mathcal{F}_3'$ is $4$-$\pch$-critical.

Next, let $G \in \mathcal{F}_4$ (see Fig.~\ref{druzina_F4}). First, suppose that $v_1y_{l-1} \in E(G)$. Then $G-v_1y_{l-1} \in \mathcal{F}_4$ and by Theorem \ref{izrek:druzine_so_4kriticne}, $\pch(G-v_1y_{l-1})=4$. Hence, $G$ is not $4$-$\pch$-critical.  Analogously, we prove that $G$ is not a $4$-$\pch$-critical graph, if  $v_1w_2 \in E(G)$. If $v_1y_{l-1}, v_1w_2 \notin E(G)$, then $G$ is a tree and consequently, it is $4$-$\pch$-critical. Therefore, each graph from $\mathcal{F}_4'$ is  $4$-$\pch$-critical.

Finally, let $G \in \mathcal{F}_5$ (see Fig.~\ref{druzina_F5}). If $G$ contains at least one of the edges from \{$u_2u_3, v_2v_3\}$, then $G-e\in \mathcal{F}_5$ for $e \in \{u_2u_3,v_2v_3\}$. By Theorem \ref{izrek:druzine_so_4kriticne}, $\pch(G-e)=4$ and thus, $G$ is not $4$-$\pch$-critical. Otherwise, $G$ is a tree and clearly, it is $4$-$\pch$-critical. Therefore, every graph from $\mathcal{F}_5'$ is $4$-$\pch$-critical. This concludes the proof.
\qed
\end{proof}

\section{Concluding remarks and open problems}
In this paper, we have characterized $4$-$\pch$-vertex-critical graphs and $4$-$\pch$-critical graphs. 

It is easy to prove that every finite graph $G$ with $\pch(G)=k$  contains a $\pch$-critical subgraph $H$ with $\pch(H)=k$. Therefore, every graph $G$ with $\pch(G)=4$ can be constructed by adding edges to a graph from $\mathcal{F}'_1 \cup \mathcal{F}_2 \cup \mathcal{F}'_3 \cup \mathcal{F}'_4 \cup \mathcal{F}'_5 \cup \mathcal{C}_5 \cup \mathcal{C}_6 \cup \mathcal{C} \cup \{K_4, H_1, H_2, \ldots, H_9\}$ (or it belongs to this set). In this way, the family of graphs with packing chromatic number $4$ can be characterized.

Note that $S$-packing colorings generalize the notion of  packing colorings. Recall that, if $S=(a_1,a_2,\ldots, a_k)$ is a non-decreasing sequence of positive integers, then an $S$-packing $k$-coloring of a graph $G$ is a partition of $V(G)$ into sets $X_1,X_2,\ldots, X_k$ such that for each pair of distinct vertices in the set $X_i$, the distance between them is larger than $a_i$. The $S$-packing chromatic number of $G$, $\chi_S(G)$, is the smallest $k$ such that $G$ has an $S$-packing $k$-coloring~\cite{goddard-2012}. Clearly, a $k$-packing coloring coincides with an $S$-packing $k$-coloring where $S =(1,2,3, \ldots, k)$. Holub, Jakovac and Klav\v zar~\cite{hjk-2020} studied $S$-packing chromatic vertex-critical graphs. These are graphs $G$ with the property that $\chi_S(G-u) < \chi_S(G)$ holds for every $u \in V(G)$. Among other results, the authors in~\cite{hjk-2020} have partially characterized $4$-$\chi_S$-critical graphs when $s_1 > 1$. It would be natural to study also the $4$-$\chi_S$-critical graphs for different sequences $S$.

\section{Acknowledgement}
The author acknowledges the financial support from the Slovenian Research Agency (the research project J1-9109).


\end{document}